\newtheorem{theorem}{Theorem}
\newtheorem{lemma}{Lemma}
\newtheorem{corollary}{Corollary}
\newtheorem{proposition}{Proposition}
\newtheorem{definition}{Definition}
\newtheorem{remark}{Remark}
\begin{document}
\title{A new look at Condition A}
\author{Quo-Shin Chi}
\thanks{The author was partially supported by NSF Grant No. DMS-0604326}
\address{Department of Mathematics, Washington University, St. Louis, MO 63130}
\email{chi@math.wustl.edu}
\date{}

\begin{abstract} Ozeki and Takeuchi~\cite[I]{OT} introduced the notion of
Condition A and Condition B
to construct two classes of inhomogeneous isoparametric
hypersurfaces with four principal curvatures in spheres, which were later generalized
by Ferus, Karcher and M\"{u}nzner to many more examples via
the Clifford representations;
we will refer to these examples of Ozeki and Takeuchi
and of Ferus, Karcher and M\"{u}nzner collectively as OT-FKM type throughout the paper.
Dorfmeister and Neher~\cite{DN} then employed isoparametric
triple systems~\cite{DN1}, which are algebraic in nature, to prove that
Condition A alone implies the isoparametric hypersurface is
of OT-FKM type. Their proof for the case of multiplicity pairs $\{3,4\}$
and $\{7,8\}$ rests on a fairly involved algebraic classification
result~\cite{Mc} about composition triples.

In light of the classification~\cite{CCJ} that leaves only the four
exceptional multiplicity
pairs $\{4,5\},\{3,4\},\{7,8\}$ and $\{6,9\}$ unsettled, it appears that
Condition A may hold the key to the classification when the multiplicity
pairs are $\{3,4\}$ and $\{7,8\}$. Thus Condition A deserves to be scrutinized
and understood more thoroughly from different angles.

In this paper, we give a fairly short and rather straightforward proof of
the result of Dorfmeister and Neher, with emphasis on the multiplicity pairs $\{3,4\}$
and $\{7,8\}$,
based on more geometric considerations. We make it explicit and apparent that the octonian algebra governs
the underlying isoparametric structure.
\end{abstract}

\keywords{isoparametric hypersurface}
\subjclass{Primary 53C40}
\maketitle
%%*************

\section{Introduction} An isoparametric hypersurface $M$ in the sphere $S^{n}$ is one whose
principal curvatures and their multiplicities are fixed. We shall not
dwell on the history and development of the beautiful isoparametric story, and
shall leave it to, e.g.,~\cite{CCJ}, and the references therein. Through M\"{u}nzner's
work~\cite{M} one knows that such a hypersurface can be characterized by a
homogeneous polynomial $F:{\mathbb R}^{n+1}\rightarrow{\mathbb R}$ of
degree $g=1,2,3,4$ or $6$,
satisfying
\begin{equation}\nonumber
|\nabla F|^2(x)=g^2|x|^{2g-2},\quad (\Delta F)(x)=(m_2-m_1)g^2|x|^{g-2}/2
\end{equation}
for two natural numbers $m_1$ and $m_2$. The interpretation of $m_1$ and $m_2$
is that if we arrange the principal curvatures $\lambda_1>\cdots>\lambda_g$
with multiplicities $m_1,\cdots,m_g$, respectively, then $m_{i}=m_{i+2}$ with
index $\mod(g)$; therefore, which one is $m_1$ or $m_2$ is only a matter of convention,
by changing $F$ to $-F$ if necessary. $F$ is called the Cartan-M\"{u}nzner
polynomial, whose restriction $f$ to $S^{n}$ has values in the interval $[-1,1]$.
$f^{-1}(c),-1<c<1,$ is a one-parameter family of isoparemetric hypersurfaces
to which $M$ belongs. 
The family degenerates to two connected
submanifolds $M_{+}:=f^{-1}(1)$ and $M_{-}:=f^{-1}(-1)$, called the focal submanifolds of $M$,
of codimension $m_1+1$ and $m_2+1$, respectively.

In the case when $g=4$, Ozeki and Takeuchi~\cite[I]{OT} introduced what they
called Conditions A and B to construct two classes of inhomogeneous
isoparametric hypersurfaces. Later on, using representations of the symmetric
Clifford algebras $C_{m_1+1}'$ (following the notation of~\cite{H}), Ferus, Karcher and M\"{u}nzner~\cite{FKM} generalized
their work to construct many more isoparametric hypersurfaces
in $S^{2(m_1+m_2)+1}$;
we will refer to these examples of Ozeki and Takeuchi
and of Ferus, Karcher and M\"{u}nzner collectively as OT-FKM type
throughout the paper. The OT-FKM hypersurfaces are of multiplicities
$\{m_1,m_2\}$, where
\begin{equation}\label{multiplicity}
m_2=k\delta(m_1)-m_1-1
\end{equation}
for some integer $k>0$, and $\delta(m_1)$ is the dimension of an irreducible module
of the skew-symmetric Clifford algebra $C_{m_1-1}$ (following the notation of~\cite{H}).
These multiplicities, with the exception
of $\{m_1,m_2\}=\{2,2\}$ or $\{4,5\}$, turn out to be
exactly the multiplicities of isoparametric hypersurfaces in spheres by the work of Stolz~\cite{S}.
We will refer to~\eqref{multiplicity} as the multiplicity formula.
The author and his collaborators recently established in~\cite{CCJ} that if $m_2\geq 2m_1-1$,
then the isoparametric hypersurface is of OT-FKM type with $m_1$ and $m_2$ given in~\eqref{multiplicity}.
This leaves open only the cases in which the multiplicities
$\{m_1,m_2\}=\{4,5\},\{3,4\},\{7,8\}$ or $\{6,9\}$ by the multiplicity
formula; we refer to them as the exceptional multiplicity pairs.

One peculiar feature of the exceptional multiplicity pairs is that they are the only pairs
for which incongruent examples of OT-FKM type admit $m_1>m_2$ in~\eqref{multiplicity}.
A deeper reason for this phenomenon manifests in~\cite{CCJ}, where it is shown
that the condition $m_2\geq 2m_1-1$ warrants that an ideal generated by certain
(complexified) components of the $2$nd fundamental form is reduced, i.e., has
no nilpotent elements, at any point of $M_{+}$. The reducedness property no longer holds,
as seen by the examples of OT-FKM type,
when it comes to the exceptional multiplicity pairs.

The aforementioned examples of Ozeki and Takeuchi are of multiplicities
$(m_1,m_2)=(3,4k),(7,8k)$ of OT-FKM type. For the construction, Ozeki and Takeuchi first imposed Condition A on the isoparametric
hypersurface. That is, they stipulated that at some point $x$ of $M_{+}$,
the shape operators $S_n$ of $M_{+}$ in all normal directions $n$ have the
same kernel. Then they imposed Condition B, which says that at the same point $x$
the components of the (cubic) $3$rd fundamental form are linearly spanned by the components
of the (quadratic) $2$nd fundamental form, with coefficients being
linear functions of the coordinates of the tangent space to $M_{+}$
at $x$.

Through the work of Ferus, Karcher and M\"{u}nzner~\cite{FKM}, one knows that Condition B
always holds for the OT-FKM type. Moreover, for the OT-FKM type, Condition A
is true at
some points on the focal submanifolds of dimension $3$ or $7$ in the case of the
exceptional multiplicity pairs $\{3,4\}$ or $\{7,8\}$.

Dorfmeister and Neher then showed~\cite{DN} that in fact Condition A alone implies
that the isoparametric hypersurface is of OT-FKM type. It seems therefore that Condition A holds the
key to the unsettled cases when the multiplicity pairs are
$\{3,4\}$ and $\{7,8\}$.
Condition A thus deserves to be scrutinized and understood more thoroughly from different
angles.

Dorfmeister and Neher's approach was via the isoparametric triple systems~\cite{DN1}, which are
algebraic in nature. The proof also relies on the fairly involved algebraic classification result~\cite{Mc}
about composition triples.

In this paper, we give a fairly short and rather straightforward proof of the result
of Dorfmeister and Neher, with emphasis on the multiplicity pairs $\{3,4\}$ and $\{7,8\}$,
based on more geometric considerations. We
make it explicit and apparent that the governing force of isoparametricity is
the octonian algebra.

In Section 2, we review the octonian algebra whose left and right multiplications
by the standard purely imaginary basis elements $e_1,\cdots,e_7$,
with $e_0$ understood to be the multiplicative identity, give rise to the two inequivalent
Clifford representations $J_a$ and $J'_a,1\leq a\leq 7,$ of $C_7$ on ${\mathbb R}^{8}$.
We also review normalized
orthogonal multiplications on ${\mathbb R}^{n+1}$, which are those bilinear
binary operations $x\circ y$ such that $|x\circ y|=|x||y|$ and $e_0\circ y=y$
for all $x,y\in{\mathbb R}^{n+1}$, where $(e_0,\cdots,e_{n})$
is the standard basis.
In ${\mathbb O}$ we characterize all the normalized orthogonal multiplications as either $x\circ y=(x(y{\overline\alpha}))\alpha$
or $x\circ y=\alpha(({\overline\alpha}y)x)$, where $\alpha$ is a
unit vector in ${\mathbb O}$ with the octonian multiplication employed on the right hand side.
In particular, restricting to ${\mathbb H}$, the associativity of the quaternions
implies $x\circ y=xy$, or $=yx$ for all $x,y\in{\mathbb H}$. At this point, we introduce
the angle $\theta$ by setting $\alpha=\cos(\theta) e_0+\sin(\theta)e$ for some
purely imaginary unit $e$.

In Section 3 we recall the expansion
formula and Condition A of Ozeki and Takeuchi, and show that at a point $x\in M_{+}$
of Condition A, the $2$nd fundamental form components can be assumed to be
$p_a(U,U)=2<e_aA,B>,1\leq a\leq 7$, associated with the standard octonian multiplication,
up to an appropriate choice of bases of the eigenspaces
of the shape operator $S$ of $M_{+}$ at $x$. Here, $U=A\oplus B\oplus C$ and
$A,B,C$
are, respectively, eigenvectors of $S$ with eigenvalues $1,-1,0$. 

Section 4 introduces two points, $x^{\#}\in M_{+}$ and
$x^{*}\in M_{-}$, related to $x\in M_{+}$ of Condition A, referred to as the mirror points
of $x$. Here,
$x^{\#}$ is also of Condition A, whose 2nd fundamental form components are given
by $p_a^{\#}(V,V)=2<e_a\circ A,B>,1\leq a\leq 7,$ for a tangent vector $V$
at $x^{\#}$
with the same eigenvector components $A$ and $B$ as above, where $\circ$ is some
normalized orthogonal multiplication on the octonian algebra.
Furthermore, the $2$nd fundamental matrices at $x^{*}$ are
appropriate combination of those at $x$ and $x^{\#}$, so that the
$2$nd fundamental form $p^{*}$ at $x^{*}$ can be succinctly expressed in terms of
$\circ$ and the octonian multiplication to read $p^{*}(W,W)=-\sqrt{2}(XZ+Y\circ Z)$,
where $W=X\oplus Y\oplus Z$ is the eigenvector decomposition
of the shape operator of a tengent vector $W$ at $x^{*}$
with eigenvalues $1,-1,0$, respectively.

In Section 5 we first present the octonian setup of the isoparametric hypersurfaces
constructed by Ferus, Karcher and M\"{u}nzner. Our expression is slightly more
general than that given in~\cite{DN2} to account for all possible normalized
orthogonal mutiplications
$\circ$ at $x^{\#}$ as indicated above. We show that, for the 
hypersurfaces constructed by Ferus, Karcher and M\"{u}nzner, we can in fact
perturb the original mirror point $x^{*}$ with arbitrary $\theta$ to one at
which $\theta=0$ or $\pi$,
i.e., at which either $a\circ b=ab$ or $a\circ b=ba$ for all
$a,b\in{\mathbb O}$, so that up to isometry there are only two such hypersurfaces.
We calculate the 3rd fundamental form at $x^{*}$ to be
${\bf q}^{*}(W,W,W)=X(Y\circ Z)-Y\circ(XZ)$
with $W=X\oplus Y\oplus Z$ the same eigenvector decomposition at $x^{*}$ as before.
We then introduce the octonian setup of the isoparametric hypersurface constructed by Ozeki
and Takeuchi. This is a hypersurface of both Conditions A and B at the point $x$
of Condition A, where the 3rd fundamental form is not linear in all variables, whereas
converting to $x^{*}$ the 3rd fundamental form ${\bf q}^{*}$
turns out to be ${\bf q}^{*}(W,W,W)=(XY-YX)Z$ (the orthogonal multiplication
$\circ$ at $x^{\#}$ coincides with the octonian multiplication in this case). The fact that $q^{*}$ is linear in
the eigenvector components $X,Y,Z$ in both Ozeki-Takeuchi and Ferus-Karcher-M\"{u}nzner
examples points to that it
will be simpler to look at the 3rd fundamental form at $x^{*}$.

Section 6 paves the way for the classification of the 3rd fundamental
form at $x^{*}$, and hence of the isoparametric hypersurface of Condition A, by verifying first that at $x^{*}$
the 3rd fundamental form
${\bf q}^{*}(W,W,W)$, for a tangent vector $W=X\oplus Y\oplus Z$ with eigenvector decomposition
as before, is
indeed only linear in $X,Y$ and $Z$; therefore, we may denote
${\bf q}^{*}$ by ${\bf q}^{*}(X,Y,Z)$ instead to treat it as a multilinear
form. We observe, by the eighth identity of the ten equations of
Ozeki and Takeuchi~\cite[I, pp 529-530]{OT} defining an isoparametric
hypersurface, that at least $|{\bf q}^{*}(X,Y,Z)|=|X(Y\circ Z)-Y\circ(XZ)|$.
We then prove several
identities of ${\bf q}^{*}(X,Y,Z)$ about what happens when one interchanges
the variables $X,Y,Z$, based
on the fifth of the ten equations of Ozeki and Takeuchi.
These properties together enable us to classify, up to an ambiguity of sign,
of the important special case ${\bf q}^{*}(X,Y,e_0)$
that the remaining classification hinges on.

%%by %%we observe that in fact there
%%is a sphere worth of $x^{\#}$, and hence of $x^{*}$, from which we can perturb
%%$x^{*}$ in fact assumes the simplest expression of the $2$nd fundamental form at
%%$x^{*}$, in the sense that when .
%%This enables us to derive certain identities of the $3$rd fundamental form at $x^{*}$
%%in Section 6,
%%which paves the way for the classification of the possible $3$rd fundamental
%%forms at $x^{*}$, and hence of the isoparametric hypersurfaces.

In Section 7, we prove that, if $0\neq 0$ and $\pi$, then the aforementioned ambiguity
of sign can be removed and the isoparametric hypersurface
must be of the type constructed by Ferus, Karcher and M\"{u}nzner, so that
the classification is reduced to the case when $\theta=0$ or $\pi$, where
the ambiguity of sign persists to an advantage. The classification is first done
for the quaternionic case. The octonian case then follows naturally from that
the octonian algebra is two (twisted) copies of the quaternion algebra. The sign
choices then differentiate the example constructed by Ozeki and Takeuchi from
the two by Ferus, Karcher and M\"{u}nzner.

\section{The octonian algebra and Clifford representations}\label{s1} Let
${\mathbb H}$ be the quaternion algebra with the standard basis $1,i,j,k$.
The octonian
algebra ${\mathbb O}$ is ${\mathbb H}\oplus {\mathbb H}$ with the
multiplication
$$
(a,b)(c,d)=(ac-\overline{d}b,da+b{\overline c}),
$$
where overline denotes
quaternionic conjugation.
%%Let $\epsilon:=(0,1)$. Then an element in ${\mathcal O}$
%%can be identified with $a+b\epsilon$, where $a,b\in {\mathbb H}$, with the usual distributive
%%law for multiplication and the rules
%%%***********
%%\begin{eqnarray}
%%\aligned
%%a(b\epsilon)&=(ba)\epsilon,\\
%%(a\epsilon)b&=(a\overline{b})\epsilon,\\
%%(a\epsilon)(b\epsilon)&=-\overline{b}a.
%%\endaligned
%%\end{eqnarray}
For $x=(a,b)\in {\mathbb O}$, the conjugate of $x$ is
$\overline{x}:=(\overline{a},-b)$, and the real and imaginary
parts of $x$ are $(x\pm\overline{x})/2$,
respectively. The inner product
\begin{equation}\label{eq-2}
<x,y>:=(x\overline{y}+y\overline{x})/2
\end{equation}
satisfies
%%**********
\begin{eqnarray}\label{eq-1}
\aligned
<\overline{x},\overline{y}>&=<x,y>,\\
<xy,z>&=<y,\overline{x}z>=<x,z\overline{y}>,\\
x(\overline{y} z)+y(\overline{x} z)&=(zx)\overline{y}+(zy)\overline{x}=2<x,y>z.
\endaligned
\end{eqnarray}
In particular, first of all, the above formulae are the rules to follow
when we interchange
two objects in the octonian multiplication. Secondly,
when $x$ and $y$ are perpendicular and purely imaginary in
${\mathbb O}$, they satisfy
%%**************
\begin{eqnarray}\label{eq0}
\aligned
xy&=-yx,\quad
x(yz)&=-y(xz),\quad
(zx)y&=-(zy)x
\endaligned
\end{eqnarray}
for all $z\in {\mathbb O}$. As a consequence of~\eqref{eq0}, if we let
$\epsilon:=(0,1)\in {\mathbb O},$
the standard orthonormal basis 
\begin{equation}\label{eq0.5}
(e_0,e_1,\cdots,e_7):=(1,i,j,k,\epsilon,i\epsilon,j\epsilon,k\epsilon)
\end{equation}
gives rise to orthogonal matrices $J_1,\cdots,J_7$ over ${\mathbb O}$, where
$J_i(z)=e_iz,1\leq i\leq 7,$ such that
$$
J_iJ_k+J_kJ_i=-2\delta_{ik}Id.
$$
Similarly, the orthogonal matrices $J_1',\cdots,J_7'$, where
$J_i'(z)=ze_i,$ satisfies
$$
J_i'J_k'+J_k'J_i'=-2\delta_{ik}Id.
$$

Recall~\cite{H} that the Clifford algebra $C_n$ (respectively, $C_n'$) is the algebra over ${\mathbb R}$
generated by
$E_1,\cdots,E_n$ subject to only the conditions that $(E_i)^2=-1$
(respectively, $(E_i)^2=1$) and $E_iE_j=-E_jE_i$ for $i\neq j$. The structure
of $C_n$ (respectively, $C_n'$, to be displayed later) is well known~\cite{H},

\begin{center}
\begin{tabular}{|c|c|c|c|c|c|c|c|c|}
\hline
n&1&2&3&4&5&6&7&8\\
\hline
$C_n$&${\mathbb C}$&${\mathbb H}$&${\mathbb H}\oplus {\mathbb H}$&${\mathbb H}(2)$&${\mathbb C}(4)$&${\mathbb R}(8)$&${\mathbb R}(8)\oplus {\mathbb R}(8)$&${\mathbb R}(16)$\\
\hline
%%$C_n'$&${\mathbb R}\oplus {\mathbb R}$&${\mathbb R}(2)$&${\mathbb C}(2)$&${\mathbb H}(2)$&${\mathbb H}(2)\oplus {\mathbb H}(2)$&${\mathbb H}(4)$&${\mathbb C}(8)$&${\mathbb R}(16)$\\
%%\hline
\end{tabular}
\end{center}
subject to the periodicity condition $C_{n+8}=C_n\otimes {\mathbb R}(16)$, of which the most important ones for our purposes are $C_2={\mathbb H},
C_3={\mathbb H}\oplus {\mathbb H},C_6={\mathbb R}(8),$ the matrix
ring of size $8$-by-$8$ over ${\mathbb R}$, and
$C_7={\mathbb R}(8)\oplus {\mathbb R}(8)$. The generators $E_1,\cdots,E_n$ projected
to each irreducible summand of $C_n,n=2,3,6,7,$ give rise to $n$ matrices
$T_1,\cdots,T_n$ in ${\mathbb R}(4)$ for $C_2$ and $C_3$, and in
${\mathbb R}(8)$ for $C_6$ and $C_7$, satisfying $(T_i)^2=-Id$ and
$T_iT_j=-T_jT_i$ for $i\neq j$. These $T_i$ make ${\mathbb R}^{4}$ and ${\mathbb R}^{8}$
into irreducible $C_n$-modules. For $n=2,6$, there is only one such irreducible module as
the number of irreducible summands of $C_n$ is one, whereas for $n=3,7$, there are two inequivalent
such irreducible modules as the number of irreducible summands of $C_n$ is two. $T_1,\cdots,T_n$
are called representations of $C_n$ on the appropriate Euclidean spaces.

The upshot is that 
the octonian (respectively, quaternionic) left and right
multiplications generated above, i.e., $J_1,\cdots,J_7$ vs. $J_1',\cdots,J_7'$
(respectively, $J_1,J_2,J_3$ vs. $J_1',J_2',J_3'$)
are precisely the inequivalent representations
of $C_7$ on ${\mathbb R}^{8}$ (respectively, $C_3$ on ${\mathbb R}^4$).
These two representations are inequivalent as $J_1\cdots J_7=-Id$ whereas
$J_1'\cdots J_7'=Id$ (respectively, $J_1J_2J_3=-Id$ whereas $J_1'J_2'J_3'=Id$).

Now the subalgebra of $C_{7}$ linearly spanned by the even
products of the Clifford generators is isomorphic to
$C_6\simeq {\mathbb R}(8)$ having a single irreducible summand.
We see $J_1J_7,J_2J_7,\cdots,J_6J_7$ and 
$J'_1J'_7,J'_2J'_7,\cdots,J'_6J'_7$ are equivalent representations of $C_6$.
That is, there is an orthogonal matrix $U$ over ${\mathbb R}^{8}$ such that
$U^{-1}J_iJ_7U=J_i'J_7'$
for $1\leq i\leq 6$. A similar discussion also holds true for ${\mathbb H}$
by forgetting $e_4,\cdots,e_7$, since $C_2={\mathbb H}$. As an application, we prove
the following to be employed later.

\begin{lemma}\label{lm} Let $m=3,7$. Let $A_a,1\leq a\leq m$, be $(m+1)$-by-$(m+1)$ matrices
satisfying
\begin{equation}\label{eq'}
A_aA_b^{tr}+A_bA_a^{tr}=2\delta_{ab}Id.
\end{equation}
Then there are two orthogonal matrices $P,Q\in O(m+1)$ for which $E_a:=P^{-1}A_aQ$
satisfy $E_m=Id$, and for $1\leq a,b\leq m-1$,
$$E_aE_b+E_bE_a=-2\delta_{ab}Id.
$$
\end{lemma}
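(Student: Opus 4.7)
\medskip

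\noindent\textbf{Proof proposal.} My plan is to avoid any appeal to Clifford-representation theory and simply make a well-chosen direct substitution. The first observation is the one forced by the hypothesis itself: setting $b=a$ gives $A_aA_a^{tr}=Id$, so each $A_a$ is already orthogonal and $A_m^{-1}=A_m^{tr}$. Since we want $E_m=Id$, the natural move is to right-multiply every $A_a$ by $A_m^{tr}$, which suggests taking the two orthogonal matrices to be $P=Id$ and $Q=A_m^{tr}$.

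With this choice $E_a=A_aA_m^{tr}$ for all $a$, and $E_m=A_mA_m^{tr}=Id$ is immediate. For $1\leq a\leq m-1$, I would read off the hypothesis with $b=m$:
\begin{equation*}
E_a+E_a^{tr}=A_aA_m^{tr}+A_mA_a^{tr}=0,
\end{equation*}
so each $E_a$ is skew-symmetric. Since $E_a$ is also orthogonal (a product of orthogonal matrices), this yields $E_a^2=-E_aE_a^{tr}=-Id$, which is the diagonal Clifford relation.

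For the off-diagonal relations, $1\leq a\neq b\leq m-1$, I would sandwich the hypothesis between two factors of $A_m^{tr}A_m=Id$:
\begin{equation*}
E_aE_b^{tr}+E_bE_a^{tr}=A_a(A_m^{tr}A_m)A_b^{tr}+A_b(A_m^{tr}A_m)A_a^{tr}=A_aA_b^{tr}+A_bA_a^{tr}=0,
\end{equation*}
and then use the already established $E_a^{tr}=-E_a$, $E_b^{tr}=-E_b$ to convert this into $E_aE_b+E_bE_a=0$, completing the Clifford relation.

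There is no real obstacle here; once the right choice $(P,Q)=(Id,A_m^{tr})$ is identified, the proof is essentially the identity \eqref{eq'} read twice, with a factor of $A_m^{tr}A_m$ inserted. The restriction $m=3,7$ plays no role whatsoever in the argument and presumably only reflects the dimensions in which the lemma is later invoked in the paper.
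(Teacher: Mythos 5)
Your proof is correct and follows essentially the same route as the paper, which also takes $P=Id$ and $Q=A_m^{-1}$ $(=A_m^{tr})$ and reads the Clifford relations off from \eqref{eq'}; you have merely written out the off-diagonal computation that the paper leaves implicit. Your closing remark is also accurate: the restriction $m=3,7$ is not used in the argument itself.
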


\begin{proof} Clearly we can find two orthogonal matrices $P$ and $Q$
such that $P^{-1}A_mQ=Id.$
(Take, e.g., $P=Id$ and $Q=(A_m)^{-1}$.) Set $a=m$. Then~\eqref{eq'} reduces to
%%%%%%%%%%%%%
\begin{eqnarray}\nonumber
E_bE_b^{tr}&=&Id,\nonumber\\
E_b+E_b^{tr}&=&0,\nonumber
\end{eqnarray}
%%%%%%%%%%%
for $1\leq b\leq m-1$. This says exactly that $E_b,1\leq b\leq m-1,$
are orthogonal matrices satisfying $(E_b)^2=-Id$ and $E_bE_c=-E_cE_b$
for $1\leq b\neq c\leq m-1$.
\end{proof}

\begin{corollary}\label{cor} Conditions and notations as in Lemma~\ref{lm}, then we may
pick orthogonal $P$ and $Q$ so that $A_a=PJ_aQ^{-1},1\leq a\leq m$. 
\end{corollary}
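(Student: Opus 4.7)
The plan is to refine the normalization produced by Lemma~\ref{lm} so that the matrices it delivers become exactly the octonionic left multiplications $J_a$, by invoking the uniqueness of the irreducible module of the Clifford algebra $C_{m-1}$ in the relevant dimensions. First, I rename the orthogonal matrices delivered by Lemma~\ref{lm} as $P_0$ and $Q_0$, so that $E_a := P_0^{-1} A_a Q_0$ satisfies $E_m = Id$ and $\{E_1, \ldots, E_{m-1}\}$ is an orthogonal representation of $C_{m-1}$ on ${\mathbb R}^{m+1}$ (skew-symmetric, orthogonal, pairwise anticommuting, each squaring to $-Id$).

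Next I introduce $G_a := J_a J_m^{-1}$ for $1 \le a \le m-1$. Since $e_m$ is a purely imaginary unit octonion and ${\mathbb O}$ is normed, $J_m$ is skew-symmetric orthogonal, so $J_m^{-1} = -J_m$ and $J_m^2 = -Id$. Combined with the anticommutation $J_a J_m = -J_m J_a$, a short calculation shows that each $G_a$ is skew-symmetric orthogonal, $G_a^2 = -Id$, and $G_a G_b + G_b G_a = -2\delta_{ab} Id$. Thus $\{G_a\}_{a<m}$ is a second orthogonal representation of $C_{m-1}$ on ${\mathbb R}^{m+1}$.

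Now $C_{m-1}$ is $C_2 \cong {\mathbb H}$ for $m=3$ and $C_6 \cong {\mathbb R}(8)$ for $m=7$, and each of these algebras has a unique irreducible module, of dimension $m+1$; hence $\{E_a\}$ and $\{G_a\}$ realize the same irreducible orthogonal representation on ${\mathbb R}^{m+1}$. A bare linear intertwiner $T$ exists by uniqueness of the module; Schur's lemma applied to the $E_a$-commutant (which is ${\mathbb H}$ for $m=3$ and ${\mathbb R}$ for $m=7$), together with the positivity of $T^{tr}T$, forces $T^{tr}T$ to be a positive multiple of $Id$, so after rescaling one obtains an orthogonal intertwiner $R \in O(m+1)$ with $R^{-1} E_a R = J_a J_m^{-1}$ for $1 \le a \le m-1$.

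Finally I set $P := P_0 R$ and $Q := Q_0 R J_m$; both are orthogonal because $R$ and $J_m$ are. Direct substitution gives $P J_m Q^{-1} = P_0 Q_0^{-1} = A_m$, and, for $a < m$, $P J_a Q^{-1} = P_0 (R J_a J_m^{-1} R^{-1}) Q_0^{-1} = P_0 E_a Q_0^{-1} = A_a$, which verifies the corollary. The one nontrivial point is the upgrade in the third step from a linear intertwiner to an orthogonal one; this is standard, but it is the only place where we actually use the uniqueness (up to scale) of the invariant inner product on the irreducible $C_{m-1}$-module.
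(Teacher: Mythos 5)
Your proof is correct and follows essentially the same route as the paper: both reduce to the equivalence of the two $C_{m-1}$-representations generated by the $E_a$ and by $J_aJ_m^{\pm 1}$, produce an orthogonal intertwiner, and then absorb it together with a copy of $J_m$ into $P$ and $Q$. You are somewhat more careful than the paper at two points — you justify via Schur's lemma that the intertwiner can be taken orthogonal, and your final substitution $P=P_0R$, $Q=Q_0RJ_m$ is checked cleanly, whereas the paper's closing adjustment is stated more tersely.
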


\begin{proof} As mentioned earlier $C_6$ is generated by
$J_1J_{m},\cdots,J_{m-1}J_{m}$. Since $C_2={\mathbb H}$ and $C_6={\mathbb R}(8)$, we know
all the Clifford representations are equivalent. Thus, there is an $O\in O(m+1)$
such that $E_a=OJ_aJ_{m}O^{-1}$
for $1\leq a\leq m-1$. Changing the $P$ and $Q$ in the above lemma to $PO$ and $QO$,
we may assume now that $E_a=J_aJ_{m},1\leq a\leq m-1$. But then changing the
(new) $P$ to $PJ_m^{-1}$, we see that we may assume $E_b=J_b$ for $1\leq b\leq m$.
\end{proof}

Recall~\cite{H} that a binary operation $\circ$ defined on ${\mathbb R}^{m+1}$ is called an
{\em orthogonal multiplication} if $|x\circ y|=|x||y|$ for all
$x,y\in {\mathbb R}^{m+1}$. Let $e_0,e_1,\cdots,e_m$ be the standard basis of
${\mathbb R}^{m+1}$. We say $\circ$ is {\em normalized} if $e_0\circ x=x$ for all
$x\in {\mathbb R}^{m+1}$; we call $({\mathbb R}^{m+1},\circ)$ a normed algebra. It is well known that if $\circ$ is normalized,
then the orthogonal maps 
$U_i(x)=e_i\circ x,1\leq i\leq m,$ satisfy $U_iU_j+U_jU_i=-2\delta_{ij}Id$
for all $1\leq i,j\leq m$. In particular, ${\mathbb R}^{m+1}$ is a $C_m$-module, which
is the case only when $m=1,3,7$. Conversely, if we have such $U_i,1\leq i\leq m$,
we let $U_0=Id$, then $e_i\circ e_j:=U_i(e_j),0\leq i,j\leq m$,
extended by linearity, gives a normalized orthogonal multiplication with
$e_0\circ x=x$ for all $x$. We identify ${\mathbb R}^{m+1}$ with ${\mathbb C},{\mathbb H}$
or ${\mathbb O},$ respectively, for $m=1,3,7$. 

\begin{lemma}\label{QS} Notation as above,
for all $z$, then there is an orthogonal transformation $T$ such that
\begin{eqnarray}
\aligned
e_a\circ T(z)&=T(e_a z)\quad{\rm or}\\
&=T(ze_a)
\endaligned
\end{eqnarray}
for $1\leq a\leq m$ and for all $z$ in the normed algebra; moreover, there is a unit
vector $\alpha$ such that $T(z)=z\alpha$ in the former case, or
$T(z)=\alpha z$ in the latter. It follows that
$$
x\circ y=(x(y\overline{\alpha}))\alpha
$$
in the former case, or
$$
x\circ y=\alpha(({\overline\alpha}y)x)
$$
in the latter. In particular,~\eqref{eq-2} and~\eqref{eq-1} remain true for $\circ$.
%%for ${\mathbb H}$ we have $x\circ y=xy$ or $=yx$ for all $x,y\in{\mathbb H}$.
\end{lemma}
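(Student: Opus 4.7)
The plan is to reduce the lemma to the classification of Clifford representations of $C_m$ on $\mathbb{R}^{m+1}$ for $m = 1, 3, 7$, and then to recognize the resulting orthogonal intertwiner as right (or left) octonion multiplication by a unit $\alpha$. The two inequivalent representations of $C_m$, namely left multiplication $\{J_a\}$ and right multiplication $\{J_a'\}$, will produce the former and latter cases of the statement, respectively.

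As recalled immediately above the lemma, the orthogonal operators $U_a(z) := e_a \circ z$ satisfy the Clifford relations $U_a U_b + U_b U_a = -2\delta_{ab}\,\mathrm{Id}$, so each is skew with $U_a^{tr} = -U_a$. Hence $A_a := U_a$ meets the hypothesis $A_a A_b^{tr} + A_b A_a^{tr} = 2\delta_{ab}\,\mathrm{Id}$ of Corollary~\ref{cor}, and the same identity extends to the index $a = 0$ upon adjoining $A_0 := \mathrm{Id}$. Applying Corollary~\ref{cor} I obtain orthogonal $P, Q \in O(m+1)$ and standard left (or right) multiplications $J_a$ (respectively $J_a'$) with $U_a = P J_a Q^{-1}$. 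The normalization $U_0 = \mathrm{Id} = J_0$ forces $PQ^{-1} = \mathrm{Id}$, i.e., $P = Q$; setting $T := P$, we have $U_a = T J_a T^{-1}$, which is exactly $e_a \circ T(z) = T(e_a z)$. The latter case follows identically using $\{J_a'\}$.

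To pass from this intertwiner $T$ to the concrete form $T(z) = z\alpha$, I set $\alpha := T(e_0) \in S^m$ and observe that right multiplication $R_\alpha: z \mapsto z\alpha$ is orthogonal and also sends $e_0$ to $\alpha$. Conjugation by $R_\alpha$ carries $\{J_a\}$ to a Clifford representation of the same handedness, since $R_\alpha (J_1 \cdots J_m) R_\alpha^{-1} = J_1 \cdots J_m$. The endomorphism ring of the irreducible $C_m$-module $\mathbb{R}^{m+1}$ is $\mathbb{R}$ for $m = 7$ and $\mathbb{H}$ for $m = 3$, so by Schur's lemma the orthogonal intertwiners between $\{J_a\}$ and $\{U_a\}$ form a discrete set (respectively a copy of $S^3$), and matching the value at $e_0$ pins $T$ down to $R_\alpha$. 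Consequently $U_a(y) = (R_\alpha J_a R_{\bar\alpha})(y) = (e_a(y\bar\alpha))\alpha$ for $a \ge 1$, and extending $\mathbb{R}$-linearly in $x$ gives $x \circ y = (x(y\bar\alpha))\alpha$; the case $x = e_0$ is consistent by alternativity, $(y\bar\alpha)\alpha = y(\bar\alpha\alpha) = y$. The latter case proceeds symmetrically with $T = L_\alpha$ to yield $x \circ y = \alpha((\bar\alpha y)x)$, and the identities~\eqref{eq-2}--\eqref{eq-1} transfer to $\circ$ by direct substitution, the $\alpha,\bar\alpha$ factors collapsing via alternativity.

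The main obstacle is the identification $T = R_\alpha$. In $\mathbb{O}$ the associator $[e_a, y, \alpha]$ is generically nonzero, so right multiplication by $\alpha$ does \emph{not} a priori commute with left multiplication by $e_a$, and $R_\alpha$ is therefore not automatically an intertwiner for an arbitrary Clifford representation. The genuine content of the lemma is that every Clifford representation arising from a normalized orthogonal multiplication is nonetheless captured by the $7$-parameter family $\{R_\alpha J_a R_{\bar\alpha}\}_{\alpha \in S^7}$; this rigidity, enforced by the distinguished role of $e_0$ via the normalization $e_0 \circ z = z$, is the essential nontrivial input that makes the lemma a classification rather than a mere reduction to Schur's lemma.
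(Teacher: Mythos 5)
Your reduction of the first assertion to the representation theory of $C_m$ is in the same spirit as the paper (which simply invokes the two inequivalent irreducible representations $\{J_a\}$, $\{J_a'\}$ to produce the orthogonal intertwiner $T$), although your detour through Corollary~\ref{cor} is shaky: that corollary produces \emph{independent} $P,Q$ with $A_a=PJ_aQ^{-1}$ only for $1\leq a\leq m$ and says nothing about the index $0$, so the step ``$U_0=\mathrm{Id}$ forces $PQ^{-1}=\mathrm{Id}$'' is a non sequitur. The clean route is the one the paper takes: the skew, anticommuting, orthogonal $U_a$ form a $C_m$-representation on ${\mathbb R}^{m+1}$, necessarily conjugate by an orthogonal $T$ to $\{J_a\}$ or to $\{J_a'\}$.

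The real problem is the second step, the identification $T=R_\alpha$, which is the entire content of the lemma — and your proof of it is circular. Schur's lemma tells you that the orthogonal intertwiners from $\{J_a\}$ to $\{U_a\}$ form a set of size two (for $m=7$) or a copy of $S^3$ (for $m=3$), and that ``matching the value at $e_0$'' would single out $R_\alpha$ \emph{provided $R_\alpha$ already belongs to that set}, i.e.\ provided $U_a R_\alpha=R_\alpha J_a$, which is $e_a\circ(z\alpha)=(e_az)\alpha$. Your observation that conjugation by $R_\alpha$ preserves the handedness of $\{J_a\}$ only shows that $\{R_\alpha J_aR_\alpha^{-1}\}$ is \emph{equivalent} to $\{U_a\}$, not equal to it, so it does not place $R_\alpha$ among the intertwiners. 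Your closing paragraph concedes exactly this (``$R_\alpha$ is therefore not automatically an intertwiner\dots the essential nontrivial input''), so the key step is named but never proved. The paper closes it by a short computation with the composition-algebra identities~\eqref{eq-1}, which hold for $\circ$ because $|x\circ y|=|x||y|$ and $e_0\circ z=z$: from $e_a\circ T(z)=T(e_az)$ one gets $\langle T(u)\circ\overline{T(v)},w\rangle=\langle u\overline v,w\rangle$, hence $T(u)=u\circ\alpha$ with $\alpha=T(e_0)$, and a second application of the intertwining relation then yields $T(w)=w\alpha$. Some argument of this kind, exploiting the normalization $e_0\circ z=z$ quantitatively rather than only through Schur's lemma, is indispensable and is missing from your proposal.
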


\begin{proof} Let $U_a(x):=e_a\circ x$. There is an orthogonal matrix $T$ such that
either $U_a=TJ_aT^{-1},$ or $U_a=TJ_a'T^{-1},1\leq a\leq m$. The first statement
follows.

To prove the second statement, we may assume $e_a\circ T(z)=T(e_a z)$ without
loss of generality. Then by the first statement just estblished, we obtain
$$
<T(u)\circ\overline{T(v)},w>=<T(u),w\circ T(v)>=<u,wv>=<u\overline{v},w>,
$$
so that
$$
T(u)\circ\overline{T(v)}=u\overline{v}.
$$
In particular, setting $\alpha:=T(e_0)$ we derive
$$
T(u)=u\circ\alpha.
$$
But then the identity $<uv,w>=<u\circ T(v),T(w)>$ implies
$$
<uv,w>=<u\circ(v\circ\alpha),w\circ\alpha>,
$$
so that when we set $v=\overline{\alpha}$ we dedece
$$
<u,w\alpha>=<u,w\circ\alpha>=<u,T(w)>
$$
for all $u,w$. That is, $T(w)=w\alpha$.

In particular, in the former case without loss of generality, we obtain
$$
x\circ y= x\circ T(T^{-1}(y))=T(xT^{-1}(y))=(x(y\overline{\alpha}))\alpha.
$$
\end{proof}

\begin{remark}\label{Q} It follows by the associativity of
${\mathbb H}$ that $x\circ y=xy$ or $=yx$ for all
$x,y\in{\mathbb H}$.
\end{remark}

%%\begin{corollary}\label{QS1}
%%For ${\mathbb O}$ there holds
%%$$
%%e_a\circ e_b=(e_a\overline{\alpha})(\alpha e_b)
%%$$
%%for all $0\leq a,b\leq 7,$ if $x\circ y=(x(y\overline{\alpha}))\alpha$, and
%%$$
%%e_a\circ e_b=(e_b\alpha)(\overline{\alpha}e_a)
%%$$
%%if $x\circ y=\alpha(({\overline\alpha}y)x)$.
%%\end{corollary}

%%\begin{proof} 
%%Without loss of generality, we assume
%%$x\circ y=(x(y\overline{\alpha}))\alpha$, which is
%%the case when $T(z)=z\alpha$ by Lemma~\ref{QS}.

%%The identity is true when one of $a$ and $b$ is $0$
%%as $e_0$ is the multiplicative identity of ${\mathbb O}$. Henceforth
%%we assume $a,b>0$ so that $e_a$ and $e_b$ are purely imaginary.
%%Then by Lemma~\ref{QS},
%%Hence the identity follows.
%%\end{proof}

%%Now, by performing an automorphism of ${\mathbb O}$ we may assume
Now decompose $\alpha$ as
$$
\alpha=\cos(\theta)e_0+\sin(\theta)e
$$
for some $\theta$ and some purely imaginary unit $e$.

\begin{lemma}\label{comparison} We assume $x\circ y=(x(y\overline{\alpha}))\alpha$. When orthonormal $a,b\in\text{Im}({\mathbb O})$ are such that
$(ab)e=\pm e_0$, then $a\circ b=ab$. On the other hand, when $a,b$ and $ab$ are all
perpendicular to $e$, we have
$$
a\circ b=\cos(2\theta)ab+\sin(2\theta)(ab)e%%=(ab)\alpha^2.
$$
\end{lemma}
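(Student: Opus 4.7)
The plan is to expand $a\circ b = (a(b\overline{\alpha}))\alpha$ with $\alpha = \cos(\theta) e_0 + \sin(\theta) e$ (so $\overline\alpha = \cos(\theta)e_0 - \sin(\theta)e$) and distribute, obtaining
\[
a\circ b = \cos^2(\theta)\, ab + \cos(\theta)\sin(\theta)\bigl[(ab)e - a(be)\bigr] - \sin^2(\theta)\, (a(be))e.
\]
Thus in both cases the lemma reduces to evaluating the three octonion products $(ab)e$, $a(be)$, and $(a(be))e$ under the relevant hypothesis, after which everything is trigonometric bookkeeping.

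In Case 1, since $ab$ is a purely imaginary unit and $(ab)e = \pm e_0$, one must have $ab = \mp e$, so $e$ lies in the quaternion subalgebra $\mathrm{span}\{e_0,a,b,ab\}$. By Artin's theorem this subalgebra is associative, so $a(be) = (ab)e$ (making the middle bracket vanish) and $(a(be))e = (ab)e^2 = -ab$. The expansion then collapses to $(\cos^2(\theta) + \sin^2(\theta))\,ab = ab$, as claimed.

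Case 2 hinges on the non-associator identity $a(be) = -(ab)e$. To derive it, I would apply the purely imaginary form of the last identity of~\eqref{eq-1}, namely $(zx)y + (zy)x = -2<x,y>z$, with $z=b$, $x=e$, $y=a$: since $<e,a>=0$, this yields $(be)a = -(ba)e = (ab)e$. Next, $be$ is purely imaginary (as a product of two perpendicular purely imaginary elements), and $<be,a> = <e,\overline b a> = <e, ab> = 0$ by the hypothesis and the bilinear identities of~\eqref{eq-1}; hence the anticommutation~\eqref{eq0} for perpendicular purely imaginary elements gives $a(be) = -(be)a = -(ab)e$. Finally, since $ab$ and $e$ are themselves perpendicular purely imaginary, Artin again forces $(a(be))e = -((ab)e)e = -(ab)e^2 = ab$. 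Substituting back produces $(\cos^2(\theta)-\sin^2(\theta))\,ab + 2\cos(\theta)\sin(\theta)\,(ab)e$, which is the claimed expression by the double-angle formulae.

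The one real obstacle is the non-associator identity $a(be) = -(ab)e$ in Case 2; the careful juggling of the three forms of~\eqref{eq-1} together with propagating the orthogonality $<ab,e>=0$ to $<be,a>=0$ is the entire content of the lemma. Case 1 is essentially free given Artin's theorem, and the rest is routine expansion of $\cos^2\pm\sin^2$ and $2\sin\cos$.
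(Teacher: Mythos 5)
Your proof is correct and follows essentially the same route as the paper: expand the defining formula $(a(b\overline{\alpha}))\alpha$ trigonometrically and reduce everything to a few octonion product identities, which you justify correctly via~\eqref{eq-1},~\eqref{eq0} and alternativity. The only cosmetic difference is that for the second case the paper first rewrites $a\circ b=(a\overline{\alpha})(\alpha b)$ using the linearized composition identity before expanding, whereas you expand directly and establish $a(be)=-(ab)e$ and $(a(be))e=ab$ by hand; both amount to the same bookkeeping.
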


\begin{proof} We assume $ab=e$ without loss of generality.
Then $b\overline{e}={\overline a}$, so that
\begin{eqnarray}\nonumber
\aligned
a\circ b&=(a(b\overline{\alpha}))\alpha\\
&=(a(\cos(\theta)b+\sin(\theta)\overline{a}))\alpha\\
&=(\cos(\theta)e+\sin(\theta)e_0)(\cos(\theta)e_0+\sin(\theta)e)\\
&=e=ab.
\endaligned
\end{eqnarray}

When $a,b,$ and $ab$ are all perpendicular to $e$, we observe first that
$$
a\circ b=(a(b\overline{\alpha}))\alpha
=-(a\overline{\alpha})(\overline{b{\overline{\alpha}}})
+2<b\overline{\alpha},\overline{\alpha}>a
=(a\overline{\alpha})(\alpha b),
$$
so that
\begin{eqnarray}\nonumber
\aligned
a\circ b&=(a(\cos(\theta)e_0-\sin(\theta)e))((\cos(\theta)e_0+\sin(\theta)e)b)\\
&=(\cos(\theta)a-\sin(\theta)ae)(\cos(\theta)b+\sin(\theta)eb)\\
&=(\cos^2(\theta)-\sin^2(\theta))ab+2\sin(\theta)\cos(\theta)(ab)e
\endaligned
\end{eqnarray}
\end{proof}

%%$$
%%\alpha=\cos(\theta)e_0+\sin(\theta)e_1
%%$$
%%for some $\theta$. As a consequence, in the case when 
%%$$
%%e_a\circ e_b=(e_a\overline{\alpha})(\alpha e_b)
%%$$
%%we conclude that 
%%when $a,b>0$ are such that $e_ae_b=\pm e_1$ we have
%%\begin{equation}\label{product}
%%e_a\circ e_b=e_ae_b
%%\end{equation}
%%as $e_0,e_1,e_a,e_b$ generate ${\mathbb H}$. On the other hand, when $a,b>0$
%%are such that $e_ae_b\neq \pm e_1$ and $e_1,e_a,e_b$ are mutually orthogonal, we have
%%\begin{eqnarray}\label{product1}
%%\aligned
%%e_a\circ e_b&=(e_a(\cos(\theta)e_0-\sin(\theta)e_1))
%%((\cos(\theta)e_0+\sin(\theta)e_1)e_b)\\
%%&=\cos(2\theta)e_ae_b+\sin(2\theta)(e_1e_a)e_b.
%%\endaligned
%%\end{eqnarray}
%%Similarly, when
%%$$
%%e_a\circ e_b=(e_b\alpha)(\overline{\alpha}e_a),
%%$$
%%we have, for $a,b>0,$
%%$$
%%e_a\circ e_b=e_be_a
%%$$
%%if $e_ae_b=\pm e_1$. Moreover,
%%$$
%%e_a\circ e_b=\cos(2\theta) e_be_a+\sin(2\theta)(e_be_1)e_a
%%$$
%%if $e_ae_b\neq e_1$ and $e_1,e_a,e_b$ are mutually orthogonal.

%%We remark that the automorphism group ${\mathcal Aut}$ of ${\mathbb O}$ acts on
%%any ${\mathbb O}$-valued multilinear form $F(X_1,\cdots,X_n),X_1,\cdots,X_n\in{\mathbb O},$ by
%%$$
%%(g\cdot F)(X_1,\cdots,X_n)=g(F(g^{-1}(X_1),\cdots,g^{-1}(X_n)))
%%$$
%%for $g\in{\mathcal Aut}$. The interpretation is that $g\cdot F$ is the
%%expression of $F$ relative to the octonian basis $e_0,g^{-1}(e_1),\cdots,g^{-1}(e_7)$. 

In passing, let us briefly remark that the table for $C_n'$,

\begin{center}
\begin{tabular}{|c|c|c|c|c|c|c|c|c|}
\hline
n&1&2&3&4&5&6&7&8\\
\hline
$C_n'$&${\mathbb R}\oplus {\mathbb R}$&${\mathbb R}(2)$&${\mathbb C}(2)$&${\mathbb H}(2)$&${\mathbb H}(2)\oplus {\mathbb H}(2)$&${\mathbb H}(4)$&${\mathbb C}(8)$&${\mathbb R}(16)$\\
\hline
\end{tabular}
\end{center}
subject to the periodicity condition $C_{n+8}'=C_n'\otimes {\mathbb R}(16)$, gives that the dimension of an irreducible module
of the Clifford algebra $C'_{m+1},m\geq 1$, is $2\delta(m)$, where $\delta(m)$
is the dimension of an irreducible module of $C_{m-1}$. We have
$\delta(m+8)=16\delta(m)$ and $\delta(m)=1,2,4,4,8,8,8,8$ for $m=1,\cdots,8$,
respectively.

\section{The expansion formula of Ozeki and Takeuchi} Let $M$ be an isoparametric
hypersurface with four principal curvatures in the sphere.
To fix our notation, we let $V_{+},V_{-}$ and $V_0$ be the eigenspaces of
the shape operator of $M_{+}$ in the normal direction ${\bf n}_{0}$ associated
with the eigenvalues $1,-1$ and $0$, of dimension
$m_2,m_2,m_1$, respectively. Let us agree that objects
of these eigenspaces are indexed
by $\alpha,\mu$ and $p$, respectively, so that, typical vectors (coordinates)
of $V_{+},V_{-}$ and $V_{0}$ are denoted by $e_\alpha,e_\mu,e_p$
($x_{\alpha},y_{\mu},z_{p}$), respectively, etc.

With this understood, the $2$nd fundamental matrices $S_a$ of $M_{+}$ in the
normal direction
${\bf n}_a,0\leq a\leq m_1$, upon fixing orthonormal bases $e_\alpha,e_\mu,e_p$,
are
\begin{equation}\label{2nd}
S_0=\begin{pmatrix}Id&0&0\\0&-Id&0\\0&0&0\end{pmatrix},
S_a=\begin{pmatrix}0&A_a&B_a\\A_a^{tr}&0&C_a\\B_a^{tr}&C_a^{tr}
&0\end{pmatrix},1\leq a\leq m_1,
\end{equation}
where $A_a:V_{-}\rightarrow V_{+}$,
$B_a:V_0\rightarrow V_{+}$ and $C_a:V_0\rightarrow V_{-}$.

Ozeki and Takeuchi~\cite[I, pp 523-530]{OT} obtained the
expansion formula for the Cartan-M\"{u}nzner polynomial F of $M$ as follows.
%%%%%%%%%%%%%%%
\begin{eqnarray}\label{eq0.0}
\aligned
&F(tx+y+w)=t^4+(2|y|^2-6|w|^2)t^2+8(\sum_{a=0}^{m_1}p_{a}w_{a})t\\
&+|y|^4-6|y|^2|w|^2+|w|^4-2\sum_{a=0}^{m_1}(p_{a})^2
+8\sum_{a=0}^{m_1}q_{a}w_{a}
\\
&+2\sum_{a,b=0}^{m_1}<\nabla p_{a},\nabla p_{b}>w_{a}w_{b}.
\endaligned
\end{eqnarray}
%%%%%%%%%%%
Here, $x$ is a point on $M_{+}$, $y$ is tangent to $M_{+}$ at $x$, and $w$ is
normal to $M_{+}$ with coordinates $w_i$ with respect to the chosen
orthonormal normal basis ${\bf n}_0,{\bf n}_1,\cdots,{\bf n}_{m_1}$
at $x$. Moreover, $p_a(y)$ (respectively, $q_a(y)$) is the $a$th component
of the $2$nd (respectively, $3$rd) fundamental form of $M_{+}$ at $x$. Furthermore,
$p_a$ and $q_a$ are subject to ten equations~\cite[I, pp 529-530]{OT}, of which the first three
assert that, since
$S_{\bf n}$, the $2$nd fundamental matrix of $M_{+}$ in any unit normal
direction ${\bf n}$, has eigenvalues $1,-1,0$ with fixed multiplicities, it
must be that $(S_{\bf n})^3=S_{\bf n}$. From this we can derive~\cite[II, p 45]{OT}
%%%%%%%%%%%%
\begin{eqnarray}\label{eq0.6}
\aligned
&A_aA_b^{tr}+A_bA_a^{tr}+2(B_aB_b^{tr}+B_bB_a^{tr})=2\delta_{ab}Id,\\
&A_a^{tr}A_b+A_b^{tr}A_a+2(C_aC_b^{tr}+C_bC_a^{tr})=2\delta_{ab}Id,\\
&B_a^{tr}B_b+B_b^{tr}B_a=C_a^{tr}C_b+C_b^{tr}C_a,
\endaligned
\end{eqnarray}
for $a\neq b$.
%%%%%%%%%%%%%%%

A point $x\in M_{+}$ is said to be of {\em Condition A}~\cite[I]{OT} if the kernel
of $S_{\bf n}$ is $V_{0}$ for all ${\bf n}$, which amounts to the same as
saying the matrices $B_a=C_a=0$ for all $1\leq a\leq m_1$ in~\eqref{2nd}, so
that~\eqref{eq0.6} now reads
%%%%%%%%%%%
\begin{eqnarray}\label{eq0.7}
\aligned
&A_aA_a^{tr}=Id,\quad
A_aA_b^{tr}+A_bA_a^{tr}=0,\quad
A_a^{tr}A_b+A_b^{tr}A_a=0,
\endaligned
\end{eqnarray}
%%%%%%%%%%%
for $1\leq a\neq b\leq m_1$. It follows that the symmetric $2$nd fundamental matrices
$S_a,0\leq a\leq m_1$, satisfy
%%%%%%%%%%%
\begin{eqnarray}\label{eq0.8}
\aligned
&(S_a)^2=Id,\quad
S_aS_b=-S_bS_a,\forall a\neq b
\endaligned
\end{eqnarray}
%%%%%%%%%%%
when they are restricted to $V_{+}\oplus V_{-}$. In other words,~\eqref{eq0.8}
asserts that $V_{+}\oplus V_{-}\simeq {\mathbb R}^{2m_2}$ is a
$C'_{m_1+1}$-module. Hence, by the
passing remark at the end of the preceding section, we see
$m_2=k\delta(m_1)$ for some $k$; thus among $(m_1,m_2)=(2,2),(4,5),(5,4)$,
only the first is possible. (In fact, Ozeki and Takeuchi established, in their
outline~\cite[II, p 54]{OT} of the classification
of the $(2,2)$ case that had been indicated by Cartan without proof~\cite{C},
that Condition A holds on one of the focal submanifolds.) But then the multiplicity formula
$m_1+m_2+1=s\delta(m_1)$ for some $s$, with $(m_1,m_2)\neq (2,2),(4,5),(5,4),$
implies $m_1+1=(s-k)\delta(m_1)$, so that $m_1=1,3$ or $7$. In particular,
for $m_1=3$ or $7$ we always have $m_2\geq 2(m_1+1)$
when $m_2\neq m_1+1$, whereas clearly $m_2\geq 2m_1-1$ for $m_1=1$; therefore,
by the result in~\cite{CCJ} 
$M$ is of the type of multiplicity $(m_1,m_2)$ constructed by Ozeki and
Takeuchi~\cite[I]{OT} when either $m_1=1$ or $m_2\neq m_1+1$.

Thus from now on, we assume $m_2=m_1+1$ with $m_1=3,7$. Then~\eqref{eq0.7} and
Corollary~\ref{cor} give the following.

\begin{corollary}\label{cor1} At a point $x\in M_{+}$ of Condition A we may assume,
by picking appropriate bases for $V_{+}$ and $V_{-}$, that
$A_a=J_a,1\leq a\leq m_1.$
\end{corollary}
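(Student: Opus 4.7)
The plan is to recognize that Corollary~\ref{cor1} is essentially an immediate consequence of Corollary~\ref{cor}, once we verify that the Condition~A equations~\eqref{eq0.7} give the $A_a$'s exactly the algebraic structure required by Lemma~\ref{lm}, and confirm that the associated basis change preserves the form of $S_0$.

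First, I would combine the first two identities of~\eqref{eq0.7} into the single relation
\begin{equation}\nonumber
A_a A_b^{tr}+A_b A_a^{tr}=2\delta_{ab}\,Id,\qquad 1\leq a,b\leq m_1,
\end{equation}
which is precisely the hypothesis~\eqref{eq'} in Lemma~\ref{lm}. Here each $A_a:V_{-}\to V_{+}$ is a square matrix of size $m_2=m_1+1$, so the lemma and its corollary apply with the index range $1\leq a\leq m_1$ exactly as stated. By Corollary~\ref{cor}, there exist $P,Q\in O(m_1+1)$ such that
\[
A_a=PJ_aQ^{-1},\qquad 1\leq a\leq m_1.
\]

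Next, I would perform the basis change: replace the orthonormal basis $\{e_\alpha\}$ of $V_{+}$ by its image under $P$, and $\{e_\mu\}$ of $V_{-}$ by its image under $Q$. Because $P$ and $Q$ are orthogonal, the new bases remain orthonormal, so the block $\operatorname{diag}(Id,-Id,0)$ shape of $S_0$ in~\eqref{2nd} is preserved. The matrix representing $A_a$ in the new bases becomes $P^{-1}A_a Q=J_a$, as desired. Since Condition~A forces $B_a=C_a=0$, the bases of $V_0$ play no role and can be chosen independently; therefore the entire $S_a$ acquires the desired form with $A_a=J_a$.

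There is essentially no obstacle here — the content of the corollary has already been established in Corollary~\ref{cor}, and the only point requiring care is the bookkeeping that (i) the size of the $A_a$'s really is $m_1+1$ (which is guaranteed by the reduction $m_2=m_1+1$ established just above the statement), and (ii) the orthogonality of $P,Q$ ensures the normal-direction $S_0$ block is unchanged.
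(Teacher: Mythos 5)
Your proposal is correct and follows exactly the paper's route: the paper's (one-line) proof likewise just invokes Corollary~\ref{cor} via the relation $A_aA_b^{tr}+A_bA_a^{tr}=2\delta_{ab}Id$ from~\eqref{eq0.7} and interprets $P$ and $Q$ as orthogonal changes of basis in $V_{+}$ and $V_{-}$. Your added bookkeeping (matrix size $m_2=m_1+1$, preservation of the $S_0$ block, irrelevance of the $V_0$ basis since $B_a=C_a=0$) is accurate and merely makes explicit what the paper leaves implicit.
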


\begin{proof} The matrices $P$ and $Q$ are for the basis changes in $V_{+}$
and $V_{-}$.
%%Note that
%%$2\begin{pmatrix}F^{\mu}_{\alpha a}\end{pmatrix}$
%%is the 2nd fundamental matrix in the $a$ direction.
\end{proof}

%%\begin{definition}\label{defn} For later purpose, we introduce a twisted octonian algebra, which is
%%${\mathbb H}\oplus {\mathbb H}$ with the multiplication
%%$(a,b)*(c,d)=(ac-d{\overline b},\overline{a}d+cb)$.
%%\end{definition}
%%This twisted octonian algebra has the multiplication table given in Addendum IV,
%%which is obtained by changing the octonian basis elements $e_5,e_6,e_7$ to their
%%negatives.

\section{Mirror points on $M_{+}$ and $M_{-}$}\label{s3} Assume Condition A at $x\in M_{+}$ when $(m_1,m_2)=(3,4)$ or
$(7,8)$. As above, let ${\bf n}_0,{\bf n}_1,\cdots,{\bf n}_{m_1}$ be an orthonormal
normal basis at $x$. We decompose the tangent space
to $M_{+}$ at $x$ into the eigenspaces $V_{+},V_{-},V_0$, with
coordinates $x_\alpha,y_\mu,z_p$ as aforementioned, of the shape operator
$S_{{\bf n}_0}$.
Traversing along the great circle spanned by $x$ and ${\bf n}_0$ by length
$\pi/2$,
we end up again on $M_{+}$ at ${\bf n}_0$ with $x$ as a normal vector.
Accordingly, set $x^{\#}:={\bf n}_0\in M_{+}$ and ${\bf n}^{\#}_0:=x$ normal to $M_{+}$ at
$x^{\#}$.
Then the eigenspaces $V_{+}^{\#},V_{-}^{\#},V_0^{\#}$ of
$S_{{\bf n}^{\#}_0}$ with eigenvalues $1,-1,0$ are~\cite[p 15]{CCJ},
respectively, $V_{+},V_{-},{\bf n}_0^{\perp}:={\rm span}({\bf n}_1,\cdots,{\bf n}_{m_1})$.
Moreover,
${\mathbb R}x\oplus V_0$ is the normal space to $M_{+}$ at $x^{\#}$. 

\begin{lemma}\label{mirror} $x^{\#}\in M_{+}$ is also of condition A.
\end{lemma}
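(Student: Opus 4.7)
The plan is to apply the Ozeki--Takeuchi expansion formula~\eqref{eq0.0} simultaneously at the two base points $x$ and $x^{\#}={\bf n}_0$ on $M_+$ to a single ambient point $P$, and then extract Condition A at $x^{\#}$ by matching polynomial coefficients in the two resulting expansions.

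First I would record the change of frame. An arbitrary $P\in\mathbb{R}^{n+1}$ can be written in the frame at $x$ as
$P = rx + x_\alpha e_\alpha + y_\mu e_\mu + z_p e_p + s\,{\bf n}_0 + t_a\,{\bf n}_a$.
Since $V_\pm^{\#}=V_\pm$, $V_0^{\#}=\mathrm{span}({\bf n}_1,\ldots,{\bf n}_{m_1})$ and the normal space at $x^{\#}$ is $\mathbb{R}x\oplus V_0$, the same $P$ reads $P=s\,x^{\#}+y^{\#}+w^{\#}$ in the frame at $x^{\#}$, where $y^{\#}=x_\alpha e_\alpha+y_\mu e_\mu+t_a\,{\bf n}_a$ is tangent at $x^{\#}$ and $w^{\#}=rx+z_p e_p$ is normal at $x^{\#}$; the normal coordinates are $w_0^{\#}=r$ (along ${\bf n}_0^{\#}=x$) and $w_p^{\#}=z_p$ (along ${\bf n}_p^{\#}=e_p$).

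Next I would apply~\eqref{eq0.0} about both base points and equate, producing polynomial identities in $(r,s,x_\alpha,y_\mu,z_p,t_a)$. Matching the coefficient of $s^1$, the $x^{\#}$-side contributes $8\,p_0^{\#}(y^{\#})\,r+8\sum_p p_p^{\#}(y^{\#})\,z_p$; on the $x$-side the only contributions come from $8(\sum_a p_a w_a)r$, from $8\sum_a q_a w_a$, and from $2\sum_{a,b}\langle\nabla p_a,\nabla p_b\rangle w_a w_b$, producing $8\,p_0(y)\,r+8\,q_0(y)+4\sum_{a\ge1}\langle\nabla p_0,\nabla p_a\rangle\,t_a$. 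Under Condition A at $x$ one has $p_0(y)=|x_\alpha|^2-|y_\mu|^2$ and $p_a(y)=2\langle A_a y_\mu,x_\alpha\rangle$ for $a\ge1$, from which a direct computation of the gradients gives $\langle\nabla p_0,\nabla p_a\rangle=0$. Therefore
\[
p_0^{\#}(y^{\#})\,r + \sum_p p_p^{\#}(y^{\#})\,z_p \;=\; p_0(y)\,r + q_0(y)
\]
as a polynomial identity.

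Finally, I would read off Condition A. Matching $r$-coefficients yields $p_0^{\#}(y^{\#})=|x_\alpha|^2-|y_\mu|^2$, confirming that the shape operator at $x^{\#}$ in the direction $x$ is $\mathrm{diag}(I,-I,0)$ on $V_+^{\#}\oplus V_-^{\#}\oplus V_0^{\#}$. Setting $r=0$ leaves $\sum_p p_p^{\#}(y^{\#})\,z_p = q_0(y)$. The right-hand side carries no $t_a$-dependence and the $z_p$ are independent indeterminates, so each quadratic form $p_p^{\#}(y^{\#})$ on $y^{\#}=(x_\alpha,y_\mu,t_a)$ must separately be independent of the $t_a$; in the block notation of~\eqref{2nd} this forces $B_p^{\#}=C_p^{\#}=0$, i.e., $V_0^{\#}\subset\ker S_p^{\#}$ for every $p$. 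Hence every normal shape operator at $x^{\#}$ kills $V_0^{\#}$, which is precisely Condition A at $x^{\#}$. The main obstacle is the careful bookkeeping in extracting the $s^1$-coefficient from~\eqref{eq0.0} and verifying the cancellation $\langle\nabla p_0,\nabla p_a\rangle=0$; once the master identity is in hand, the polynomial-comparison step is immediate.
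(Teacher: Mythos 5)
Your proposal is correct and follows essentially the same route as the paper: expand $F$ in the adapted frames at both $x$ and $x^{\#}$, match the coefficient of the radial variable at $x^{\#}$ (i.e.\ of $w_0$), and identify $p_0^{\#}=p_0$ and $p_p^{\#}$ with the $z_p$-coefficients of $q_0$, which are independent of the $V_0^{\#}$-variables, forcing $B_p^{\#}=C_p^{\#}=0$. The only cosmetic difference is that you verify the vanishing of the cross terms $\langle\nabla p_0,\nabla p_a\rangle$ by direct computation, whereas the paper disposes of them by noting they contribute no $z_pw_0$ monomials (invoking Ozeki--Takeuchi's Lemma 15(ii) for the structure of $q_0$).
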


\begin{proof} Although a straightforward proof can be given by
the formulae on page 15 of~\cite{CCJ}, we
choose to give one based on the expansion formula~\eqref{eq0.0}. Since
$x$ is of Condition A, we know $p_a,0\leq a\leq m_1,$ are quadratic forms in
$x_\alpha$ and
$y_{\mu}$ only. If we denote,
at $x^{\#}$, all the involved quantities in~\eqref{eq0.0} with an
additional \#, then $t^{\#}=w_0,w_0^{\#}=t,w_1^{\#}=z_1,\cdots,
w_{m_1}^{\#}=z_{m_1}$.
The $3$rd term of~\eqref{eq0.0} at $x^{\#}$, which is
$$
8(\sum_{a=0}^{m_1}p_a^{\#}w_a^{\#})t^{\#},
$$
is what determines the 2nd fundamental form at $x^{\#}$.

One obtains
$p_0^{\#}=p_0$
by the fact that $p_0w_0t=p_0w_0^{\#}t^{\#}$, which is part of
the $3$rd term of~\eqref{eq0.0} at $x$, and no other terms contribute $w_0t$
of the $1$st degree. Furthermore, expanding $8q_0w_0$ in 
$z_1,\cdots,z_{m_1}$, we have
%%%%%%%%%%%%%%%%%
\begin{eqnarray}\label{eq000}
\aligned
8q_0w_0&=8(H_1z_1+\cdots+H_{m_1}z_{m_1})w_0\\
&=8(H_1w_1^{\#}+\cdots+H_{m_1}w_{m_1}^{\#})t^{\#},
\endaligned
\end{eqnarray}
where $H_1,\cdots,H_{m_1}$ are quadratic forms only in
$x_{\alpha}$ and $y_{\mu}$, because $q_0$ is homogeneous of degree $1$ in all
$x_{\alpha},y_{\mu},z_{p}$~\cite[I, Lemma 15(ii), p 537]{OT}. No other terms of~\eqref{eq0.0} contribute
$z_1w_0,\cdots,z_{m_1}w_0$ of the $1$st degree. It follows that
$p_1^{\#}=H_1,\cdots,p_{m_1}^{\#}=H_{m_1}$.
Hence, $x^{\#}$ is of condition A as well.
\end{proof}
In~\eqref{2nd}, we use an additional \# to indicate the corresponding
quantities in the $2$nd fundamental matrices at $x^{\#}$.
%%%%%%%%%%
\begin{remark}\label{rmk} Actually, Lemma~{\rm \ref{mirror}} proves more. It shows that in fact
$q_0$ determines $A_a^{\#},1\leq a\leq m_1$, whose entries are the
coefficients of $H_a/2,1\leq a\leq m_1$.
\end{remark}
%%%%%%%%%%%%%%%

Next, let
\begin{eqnarray}\nonumber
\aligned
x^{*}&=&(x+{\bf n}_0)/\sqrt{2},\quad
{\bf n}_0^{*}&=&(x-{\bf n}_0)/\sqrt{2}.
\endaligned
\end{eqnarray}
Then $x^{*}\in M_{-}$. We decompose the tangent space to $M_{-}$
at $x^{*}$ into the eigenspaces $V_{+}^{*},V_{-}^{*},V_0^{*}$,
of the shape operator $S_{{\bf n}_0^{*}}$ with eigenvalues $1,-1,0$,
respectively. Again, we use an additional * to denote all involved quantities at $x^{*}$.

\begin{lemma}\label{lemma1} We have 
\begin{description}
\item[(1)] At $x^{*}$, there holds
$V_{+}^{*}={\bf n}_0^{\perp}, %%(={\rm span}({\bf n}_1,\cdots,{\bf n}_{m_1})),
V_{-}^{*}=V_0$,
$V_{0}^{*}=V_{-}$, and the normal space
to $M_{-}$ at $x^{*}$ is ${\mathbb R}{\bf n}_0^{*}\oplus V_{+}$
\item[(2)] The second fundamental matrices at $x^{*}\in M_{-}$
are given by the $m_1+1(=m_2)$ matrices
$$
S_a^{*}:=\begin{pmatrix} 0&0&B_a^{*}\\0&0&C_a^{*}\\(B_a^{*})^{tr}&(C_a^{*})^{tr}&0\end{pmatrix},
$$
where $1\leq a\leq m_1+1, m_1=3,7,$ and $B_a^{*}$ $(respectively,\, C_a^{*})$ is the
$m_1$-by-$(m_1+1)$ matrix formed
by stacking together, in order, the $a$th row of each of the $m_1$
matrices $-A_1/\sqrt{2},\cdots,-A_{m_1}/\sqrt{2}$ $(respectively,\,
-A_{1}^{\#}/\sqrt{2},\cdots,-A_{m_1}^{\#}/\sqrt{2})$ at $x$ $(respectively,\,
at\, x^{\#})$.
\end{description}
\end{lemma}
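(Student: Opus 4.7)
The plan is to apply the Ozeki-Takeuchi expansion formula~\eqref{eq0.0} twice: once at $x\in M_+$ with $F$, and once at $x^*\in M_-$ with $\tilde F:=-F$ (which also satisfies the Cartan-M\"{u}nzner conditions, with $m_1$ and $m_2$ interchanged, and has $M_-$ as its positive focal submanifold), and then to match the two expansions through the ambient identity $F=-\tilde F$. Using $x^*=(x+{\bf n}_0)/\sqrt 2$ and ${\bf n}_0^*=(x-{\bf n}_0)/\sqrt 2$, an arbitrary ambient point
$$
sx^*+w_0^*{\bf n}_0^*+X+W+Z+Y,\qquad X\in V_+,\;W\in V_-,\;Z\in V_0,\;Y\in{\bf n}_0^\perp,
$$
is equivalently $tx+y+w$ with $t=(s+w_0^*)/\sqrt 2$, $y=X+W+Z$, and $w=((s-w_0^*)/\sqrt 2){\bf n}_0+Y$. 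Substituting into~\eqref{eq0.0} produces a polynomial in $s,w_0^*,X,W,Z,Y$, which we compare with the $x^*$-expansion $\tilde F=s^4+(2|Y^*|^2-6|w^*|^2)s^2+8\sum_a\tilde p_aw_a^*\,s+\cdots$.

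For part~(1), the $s^2$-coefficient in $F$ evaluates to $6(w_0^*)^2+6|X|^2-2|W|^2-2|Z|^2-2|Y|^2$, with contributions coming from $t^4$, $(2|y|^2-6|w|^2)t^2$, $8p_0w_0t$ (with $p_0=|X|^2-|W|^2$), $-6|y|^2|w|^2$, $|w|^4$, and $2|\nabla p_0|^2w_0^2$ (with $|\nabla p_0|^2=4|X|^2+4|W|^2$). Matching with $6|w^*|^2-2|Y^*|^2$ (the $s^2$-coefficient of $F=-\tilde F$ at $x^*$) identifies the normal space to $M_-$ at $x^*$ as ${\mathbb R}{\bf n}_0^*\oplus V_+$ and the tangent space as $V_-\oplus V_0\oplus{\bf n}_0^\perp$. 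Extracting next the $sw_0^*$-coefficient from the same list of terms yields $8|Z|^2-8|Y|^2$; matching with $-8\tilde p_0(Y^*)$ gives $\tilde p_0(Y^*)=|Y|^2-|Z|^2$, whose eigenvalues $+1,-1,0$ on ${\bf n}_0^\perp,V_0,V_-$ respectively confirm $V_+^*={\bf n}_0^\perp$, $V_-^*=V_0$, $V_0^*=V_-$.

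For part~(2), we extract the coefficient of $sX_\alpha$ for each basis vector $e_\alpha\in V_+$ (with $X=\sum_\alpha X_\alpha e_\alpha$). Only two terms of~\eqref{eq0.0} survive: the term $8\sum_{b\geq 1}p_bw_bt$, with $p_b=2\langle X,A_bW\rangle$ coming from Corollary~\ref{cor1}, contributes $-16\langle Y,B_a^*W\rangle$ where $(B_a^*)_{b\mu}:=-(A_b)_{a\mu}/\sqrt 2$; and the term $8q_0w_0$, using Remark~\ref{rmk} together with the expansion $q_0=\sum_c H_cz_c$ from~\eqref{eq000} and $H_c=p_c^\#=2\langle X,A_c^\#W\rangle$, contributes $-16\langle Z,C_a^*W\rangle$ where $(C_a^*)_{c\mu}:=-(A_c^\#)_{a\mu}/\sqrt 2$. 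All other terms either lack an $s$-factor, lack a linear $X_\alpha$-factor, or vanish identically; in particular, $\langle\nabla p_0,\nabla p_c\rangle=0$ for $c\geq 1$ because $\nabla p_0=2X-2W$ and $\nabla p_c=(2A_cW,2A_c^{tr}X,0)$ are orthogonal. Matching the total $-16\langle Y,B_a^*W\rangle-16\langle Z,C_a^*W\rangle$ with $-8\tilde p_a(Y^*)$ yields $\tilde p_a(Y^*)=2\langle Y,B_a^*W\rangle+2\langle Z,C_a^*W\rangle$, which is precisely the quadratic form associated with $S_a^*$ in the stated form.

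The main technical obstacle is the combinatorial bookkeeping in part~(2): systematically tracking which terms in~\eqref{eq0.0} produce a given $sX_\alpha$-coefficient, and verifying that the surviving pieces assemble into the stacked-rows structure of $B_a^*$ and $C_a^*$ as claimed. The two key simplifying ingredients are the orthogonality $\langle\nabla p_0,\nabla p_c\rangle=0$ for $c\geq 1$, which annihilates all cross contributions from the last term of~\eqref{eq0.0}, and the identification (via Remark~\ref{rmk}) of the $H_c$ arising in $q_0$ at $x$ with the second-fundamental-form components $p_c^\#$ at the mirror point $x^\#$, thereby producing the matrices $A_c^\#$ that fill the $C_a^*$ block exactly as claimed.
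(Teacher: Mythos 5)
Your proposal is correct and takes essentially the same route as the paper's own proof: expand $-F$ at $x^{*}$ via the Ozeki--Takeuchi formula, substitute $t=(s+w_0^{*})/\sqrt{2}$, $w_0=(s-w_0^{*})/\sqrt{2}$, and read off the $s^2$, $sw_0^{*}$ and $sX_\alpha$ coefficients, the last coming from $8\sum_{b\geq 1}p_bw_bt$ and from $8q_0w_0$ via \eqref{eq000} and Remark~\ref{rmk}. Your explicit verification that $\langle\nabla p_0,\nabla p_c\rangle=0$ for $c\geq 1$, which rules out a contribution from the last term of \eqref{eq0.0}, makes precise a point the paper passes over silently.
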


\begin{proof} Again we explore~\eqref{eq0.0} with a slight modification.
Namely, since~\eqref{eq0.0} is with respect to $M_{+}$ while $x^{*}\in M_{-}$,
we must consider the expansion of $-F$ at $x^{*}$ in order to apply~\eqref{eq0.0}.
From the definition of $x^{*}$
and ${\bf n}_0^{*}$, we see $t=(t^{*}+w^{*})/\sqrt{2}$ and $w_0=(t^{*}-w^{*})/\sqrt{2}$.

The collection of $(t^*)^2$ terms for $-F$ will reveal the tangent and normal space
at $x^{*}$. But these terms come from the first two terms, $8p_0w_0t$,
$-6|y|^2(w_0)^2$,$|w|^4$ and $2<\nabla p_0,\nabla p_0>w_0^2$ in the
expansion of $F$. As a result, the $2$nd term in the expansion of $-F$ at $x^{*}$ is
$$
(t^{*})^2(2(\sum_{\mu}y_{\mu}^2+\sum_{p}z_p^2+\sum_{a\geq1}w_a^2)
-6((w_0^*)^2+\sum_{\alpha}x_\alpha^2)),
$$
where as before $x_\alpha,y_\mu,z_p,w_a$ parametrize $V_{+},V_{-},V_{0}$
and the normal space to $M_{+}$ at $x$. On the other hand, the collection of
$w_0^{*}t^{*}$, which comes from the same terms,
gives $p_0^{*}$ so that we end up with
$$
p_0^{*}=\sum_{a\geq 1}w_a^2-\sum_{p}z_p^2.
$$
Hence, the first statement follows.

The collection of the terms $w_{1}^{*}t^{*}=x_{1}t^{*},\cdots,w_{m_2}^{*}t^{*}=x_{m_2}t^{*}$,
with coefficients being quadratic forms in $y_\mu,z_p,w_a,a\geq 1$,
gives rise to the $2$nd fundamental form of $M_{-}$ at $x^{*}$. But these
terms come only
from $8(\sum_{a\geq 1}p_aw_a)t^{*}/\sqrt{2}$ obtained by the third term
of~\eqref{eq0.0},
and from $8q_0t^{*}/\sqrt{2}$ obtained by the eighth term in~\eqref{eq0.0}.
Combining
them yields, by~\eqref{eq000},
$$
%%8\sum_{a\geq 1}(p_aw_a+p_a^{\#}z_a)/\sqrt{2}
8\sum_{\alpha}(\sum_{a\mu}2A_{\alpha\mu a}y_\mu w_a)x_{\alpha}/\sqrt{2}
+8\sum_{\alpha}(\sum_{a\mu}2A^{\#}_{\alpha\mu a}y_\mu z_a)x_\alpha/\sqrt{2},
$$
where
$
A_a=\begin{pmatrix}A_{\alpha\mu a}\end{pmatrix},
A_a^{\#}=\begin{pmatrix}A^{\#}_{\alpha\mu a}\end{pmatrix}.
$
This is the 2nd statement, where the negative sign accounts for considering
$-F$ at $x^{*}$.
\end{proof}
%%%%%%%%%%%%%%

Recall by Corollary~\ref{cor1} we may assume $A_{a}=J_a,1\leq a\leq m_1,$
at a point $x$ of Condition A. We now understand the structure of
$A_{a}^{\#},1\leq a\leq m_1$.
%%%%%%%%%%%%
\begin{lemma}\label{veryneat} Let $e_0,e_1,\cdots,e_{m_1}$ be the standard basis of
${\mathbb R}^{m_2}\simeq {\mathbb H}$ or ${\mathbb O}$. Then $<A_a^{\#}(e_0),e_0>=0$ for all $1\leq a\leq m_1$.
In particular, we may assume $A_a^{\#}(e_0)=e_a$ for all $1\leq a\leq m_1$;
as a result, $(A_a^{\#})^{tr}(e_0)=-e_a$. It follows that we may further
assume that
$A_a^{\#}$ are skew-symmetric, i.e., that 
$A_a^{\#},1\leq a\leq m_1$, form a Clifford system.
\end{lemma}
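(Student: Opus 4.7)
The plan is to prove the four successive assertions of the lemma using the general relations \eqref{eq0.6} at the mirror point $x^{*}\in M_{-}$. Lemma~\ref{lemma1} gives explicit formulas for $B_a^{*}$ and $C_a^{*}$ in terms of $A_a=J_a$ (left octonian multiplication) and $A_a^{\#}$, and the crucial algebraic lever is the third equation of \eqref{eq0.6} at $x^{*}$, which reads $(B_a^{*})^{tr}B_b^{*}+(B_b^{*})^{tr}B_a^{*}=(C_a^{*})^{tr}C_b^{*}+(C_b^{*})^{tr}C_a^{*}$. Substituting the formulas from Lemma~\ref{lemma1}, reading off the $(\mu,\nu)$-entry, and clearing the factor $1/2$, one obtains the symmetrized identity
\[
\sum_{c}\bigl[(J_c)_{a\mu}(J_c)_{b\nu}+(J_c)_{b\mu}(J_c)_{a\nu}\bigr]=\sum_{c}\bigl[(A_c^{\#})_{a\mu}(A_c^{\#})_{b\nu}+(A_c^{\#})_{b\mu}(A_c^{\#})_{a\nu}\bigr],
\]
in which $c$ ranges over $1,\ldots,m_1$. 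This identity will drive the first three claims.

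To establish $<A_a^{\#}(e_0),e_0>=0$, I would set $a=b=\mu=\nu=0$ in the identity. The left side vanishes because $(J_c)_{0,0}=<e_ce_0,e_0>=<e_c,e_0>=0$ for $c\geq1$, so the right side forces $\sum_c(A_c^{\#})_{0,0}^2=0$, giving $(A_c^{\#})_{0,0}=0$ for every $c$. Next, the vectors $v_a:=A_a^{\#}(e_0)$ are unit vectors by the first relation of \eqref{eq0.7}, and pairwise orthogonal because $(A_a^{\#})^{tr}A_b^{\#}$ is skew-symmetric for $a\neq b$ (third relation of \eqref{eq0.7}), yielding $<v_a,v_b>=<e_0,(A_a^{\#})^{tr}A_b^{\#}e_0>=0$. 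Combined with the first claim, $\{v_a\}_{a=1}^{m_1}$ is an orthonormal basis of $e_0^{\perp}\subset V_{+}^{\#}$; a rotation of $n_1,\ldots,n_{m_1}$ (equivalently, a basis change of $V_0^{\#}$) then allows us to take $v_a=e_a$. Returning to the same identity with $(b,\mu)=(0,0)$ and $a,\nu\geq1$, the left side reduces to $\sum_c\delta_{ca}(-\delta_{c\nu})=-\delta_{a\nu}$ (using $(J_c)_{a,0}=\delta_{ca}$ and $(J_c)_{0,\nu}=-\delta_{c\nu}$), while the right side, after substituting the now-established values $(A_c^{\#})_{0,0}=0$ and $(A_c^{\#})_{a,0}=\delta_{ca}$, collapses to $(A_a^{\#})_{0,\nu}$. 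Hence $(A_a^{\#})_{0,\nu}=-\delta_{a\nu}$, i.e., $(A_a^{\#})^{tr}(e_0)=-e_a$.

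The main obstacle is the final claim of skew-symmetry. After the first three parts, each $A_a^{\#}$ has the block form $\begin{pmatrix}0&-\mathbf{e}_a^{tr}\\ \mathbf{e}_a&T_a\end{pmatrix}$ relative to the splitting $\mathbb{R}^{m_2}=\mathbb{R}e_0\oplus e_0^{\perp}$. The orthogonality $A_a^{\#}(A_a^{\#})^{tr}=Id$ from \eqref{eq0.7} forces the $a$-th row and column of $T_a$ to vanish, with $T_aT_a^{tr}$ equal to the projection onto $\mathbf{e}_a^{\perp}\subset\mathbb{R}^{m_1}$. The anticommutation relations descend to the constraints $T_aT_b^{tr}+T_bT_a^{tr}=-(\mathbf{e}_a\mathbf{e}_b^{tr}+\mathbf{e}_b\mathbf{e}_a^{tr})$ and $T_a\mathbf{e}_b+T_b\mathbf{e}_a=0$ for $a\neq b$. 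Working these out concretely is tractable for $m_1=3$ and shows that each $T_a$ is forced to be skew-symmetric (up to an overall sign corresponding to the choice of irreducible $C_3$-representation); for $m_1=7$ the analogous analysis should identify $A_a^{\#}$ with right octonian multiplication $J_a'$, the second inequivalent irreducible $C_7$-representation on $\mathbb{R}^8$, which is inherently skew-symmetric. Once skew-symmetry is in hand, $(A_a^{\#})^2=-(A_a^{\#})^{tr}A_a^{\#}=-Id$ and $A_a^{\#}A_b^{\#}+A_b^{\#}A_a^{\#}=0$ for $a\neq b$ follow immediately from \eqref{eq0.7}, establishing the Clifford system property.
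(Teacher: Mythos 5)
Your treatment of the first three assertions is correct and follows the paper's own route: you use the same key relation, namely the third equation of \eqref{eq0.6} at $x^{*}$ (the paper's \eqref{needed}, in coordinates \eqref{eq0.9}), to kill the $(0,0)$-entry, then orthonormality of the $v_a=A_a^{\#}(e_0)$ plus a rotation of the normal frame at $x^{\#}$ to normalize $v_a=e_a$, and then the same identity again to get $(A_a^{\#})^{tr}(e_0)=-e_a$. The genuine gap is in the final and substantive assertion, skew-symmetry of $A_a^{\#}$. Your argument there is only a sketch: you write down block constraints and assert that "working these out concretely is tractable for $m_1=3$" and that "for $m_1=7$ the analogous analysis should identify $A_a^{\#}$ with right octonian multiplication $J_a'$." Two problems. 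First, the $m_1=7$ case -- the one the paper actually cares about -- is not carried out, and it is not a routine transcription of the $m_1=3$ computation: for $m_1=3$ the relation $T_a\mathbf{e}_b=-T_b\mathbf{e}_a$ pins down $T_a\mathbf{e}_b$ up to sign because $\mathbf{e}_a^{\perp}\cap\mathbf{e}_b^{\perp}$ is one-dimensional, whereas for $m_1=7$ that intersection is five-dimensional and the constraint gives much less. Second, the claimed outcome is false: the $A_a^{\#}$ are \emph{not} forced to be $J_a'$. The solution set is the family of normalized orthogonal multiplications $\circ$ of Lemma~\ref{QS}, parametrized by a unit $\alpha\in{\mathbb O}$; in the Ozeki--Takeuchi example of Section 5.3 one has $A_a^{\#}=J_a$ (left multiplication). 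Any "analysis" whose conclusion is $A_a^{\#}=J_a'$ would contradict that example, so the deferred computation cannot go the way you describe; at best it can show skew-symmetry, which is all the lemma claims.

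For contrast, the paper closes this step differently and you should note what you are missing. From the $a=b$, $\mu=\nu$ case of the very identity you wrote (at \emph{every} index, not just $(0,0)$), it first extracts \eqref{AS}: the entire diagonal of each $A_a^{\#}$ vanishes in the standard basis. A zero diagonal alone does not give skew-symmetry, so the paper then invokes the transitivity of the automorphism group ($SO(3)$ for ${\mathbb H}$, $G_2$ for ${\mathbb O}$) on the unit sphere of $e_0^{\perp}$: any imaginary unit $e$ can play the role of $e_1$ in an adapted basis in which the whole derivation repeats verbatim, so $<A_a^{\#}(e),e>=0$ for \emph{all} imaginary units $e$; combined with $<A_a^{\#}(e_0),e_0>=0$ and the already-established first row and column, polarization gives $<A_a^{\#}(v),v>=0$ for every $v$, i.e., skew-symmetry, and then \eqref{eq0.7} makes the $A_a^{\#}$ a Clifford system. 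A purely matrix-theoretic route does exist (e.g., one can show from the anticommutation relations that $A_a^{\#}=PJ_aQ$ and that your normalization forces $Q(e_0)=\pm P^{tr}(e_0)$, whence skewness), but as written your proposal neither contains such an argument nor the paper's; the case that matters is left to an unverified and, as stated, incorrectly predicted computation.
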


\begin{proof} Since $A_a=J_a,1\leq a\leq m_1$, the second item in Lemma~\ref{lemma1} says that
the $a$th column of $B_a^{*}$ is zero, $1\leq a\leq m_1$.
Now, the third equation of~\eqref{eq0.6} applied to the point $x^{*}\in M_{-}$
says
\begin{equation}\label{needed}
(B_a^{*})^{tr}B_b^{*}+(B_b^{*})^{tr}B_a^{*}=
(C_a^{*})^{tr}C_b^{*}+(C_b^{*})^{tr}C_a^{*},
\end{equation}
which implies that the $a$th column of $C_a^{*}$ is also zero, $1\leq a\leq m_1$,
when we set $a=b$ in the equation.
Equivalently, this means the diagonal of $A_{a}^{\#},1\leq a\leq m_1,$ is
zero. So,
\begin{equation}\label{AS}
<A_a^{\#}(e_b),e_b>=0,1\leq a\leq m_1,0\leq b\leq m_1.
\end{equation}
Since $v_a:=A_a^{\#}(e_0),1\leq a\leq m_1,$ are perpendicular to each other
by the third equation
of~\eqref{eq0.7} and Lemma~\ref{mirror}, we deduce therefore that $v_a,1\leq a\leq m_1$,
span
$e_0^{\perp}$. Thus, there is an orthogonal matrix $\begin{pmatrix}\theta_{ab}\end{pmatrix}$
of size $m_1$-by-$m_1$ such that $\sum_{b}\theta_{ab}v_b=e_a.$ The matrices
$\sum_{b}\theta_{ab}A_b^{\#},1\leq a\leq m_1$, which are the $A$-blocks of the $2$nd
fundamental matrices corresponding to
the new normal basis
${\bf n}_0':={\bf n}_0^{\#},{\bf n}_a':=\sum_{b}\theta_{ab}{\bf n}_{b}^{\#},1\leq a\leq m_1,$
at $x^{\#}\in M_{+}$, will serve as the new $A_a^{\#}$ mapping $e_0$ to $e_a$.
Thus without loss of generality we may now assume $A_a^{\#}(e_0)=e_a,1\leq a\leq m_1$. 

In coordinates,~\eqref{needed} assumes the form
%%%%%%%%%%
\begin{equation}\label{eq0.9}
\sum_{a=1}^{m_1}(A_{\alpha\mu a}A_{\beta\nu a}+A_{\beta\mu a}A_{\alpha\nu a})
=\sum_{b=1}^{m_1}(A^{\#}_{\alpha\mu b}A^{\#}_{\beta\nu b}+A^{\#}_{\alpha\mu b}
A^{\#}_{\beta\nu b}).
\end{equation}
%%%%%%%%%%%%
Hence, if we pick $\alpha=\mu=0$ and $\beta=\nu=a,1\leq a\leq m_1$, we
see by the fact that $A_a=J_a,1\leq a\leq m_1,$ that the product of
the $(a,0)$-entry and the $(0,a)$-entry of $A_a^{\#}$ is $-1$,
so that the latter is $-1$ since the former is $1$. This forces all other entries of the first row
of $A_a^{\#}$ to be zero as $A_a^{\#}$ is orthogonal. In conclusion,
$(A_a^{\#})^{tr}(e_0)=-e_a.$ That is, $A_a^{\#}$ is skew-symmetric in the
first row and column, $1\leq a\leq m_1$.

Since $A_a^{\#},1\leq a\leq m_1,$ leave
$<e_0,e_a>^{\perp}$ invariant and since the group of automorphism of
${\mathbb H}$ and ${\mathbb O}$, which are $SO(3)$ and $G_2$, respectively,
are transitive on the unit sphere of $e_0^{\perp}$, we see that any purely imaginary
unit vector $e$ can serve as $e_1$. Therefore, $<A_a^{\#}(e),e>=0$ by~\eqref{AS}. It follows that
$A_a^{\#}$ restricted on $<e_0,e_a>^{\perp}$ is also skew-symmetric. In particular,~\eqref{eq0.7}
says that $A_a^{\#},1\leq a\leq m_1$, form a Clifford system.
\end{proof}

\begin{definition} For notational ease, we let $A_0^{\#}=Id.$ We define a
normalized orthogonal
multiplication $\circ$ on ${\mathbb R}^{m_2}$ by
$e_a\circ e_b=A_a^{\#}(e_b)$ for $0\leq a,b\leq m_1,$ and extend it by linearity.
\end{definition}

%%\begin{remark}\label{R} $\circ$ is just the usual complex multiplication when $m_1=1$.
%%\end{remark}

We can now determine the $2$nd fundamental form at $x^{*}\in M_{-}$.

\begin{proposition}\label{ppp2} For $(m_1,m_2)=(7,8)$, the $2$nd fundamental form ${\bf p}^{*}$ at
$x^{*}\in M_{-}$ is given by
\begin{equation}\label{2fndtl}
{\bf p}^{*}(W,W)=-\sqrt{2}(XZ+Y\circ Z)
\end{equation}
for a tangent vector $W=X\oplus Y\oplus Z$ at $x^{*}$, where
$X\in V_{+}^{*}\simeq {\rm Im}({\mathbb O}$),
the purely imaginary part of ${\mathbb O}$,
$Y\in V_{-}^{*}\simeq {\rm Im}({\mathbb O})$,
$Z\in V_{0}^{*}\simeq {\mathbb O}$, and ${\bf p}^{*}$ lives in the normal space to $M_{-}$, which is
${\mathbb R}{\bf n}_0^{*}\oplus V_{+}\simeq {\mathbb R}\oplus {\mathbb O}$.
\end{proposition}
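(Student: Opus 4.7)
The plan is to unfold the matrix description from Lemma~\ref{lemma1} using the octonion identifications now available from Corollary~\ref{cor1} and Lemma~\ref{veryneat}, and then to read off the $2$nd fundamental form directly. Since $A_a=J_a$ is left octonion multiplication by $e_a$, and since $A_a^{\#}$ realizes the normalized orthogonal multiplication $\circ$ via $A_a^{\#}(e_b)=e_a\circ e_b$, the matrix entries obey $(A_b)_{\alpha\mu}=\langle e_\alpha,e_be_\mu\rangle$ and $(A_b^{\#})_{\alpha\mu}=\langle e_\alpha,e_b\circ e_\mu\rangle$. Substituting into the stacking recipe of Lemma~\ref{lemma1}, the $(b,\mu)$-entry of $B_\alpha^*$ becomes $-\langle e_\alpha,e_be_\mu\rangle/\sqrt{2}$ and that of $C_\alpha^*$ becomes $-\langle e_\alpha,e_b\circ e_\mu\rangle/\sqrt{2}$.

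Next I will compute the $\alpha$-th scalar component of the 2nd fundamental form at $x^*$ from $S_\alpha^*$, whose only nonzero blocks are off-diagonal. Writing $W=X\oplus Y\oplus Z$ with $X=\sum_{b=1}^{m_1} X^be_b\in\mathrm{Im}({\mathbb O})$, $Y=\sum_{b=1}^{m_1} Y^be_b\in\mathrm{Im}({\mathbb O})$, $Z=\sum_{\mu=0}^{m_1} Z^\mu e_\mu\in{\mathbb O}$, and using bilinearity to recognize $\sum_{b,\mu}X^bZ^\mu e_be_\mu = XZ$ and $\sum_{b,\mu}Y^bZ^\mu e_b\circ e_\mu = Y\circ Z$, the expression $p_\alpha^*(W,W)=2X^{tr}B_\alpha^*Z+2Y^{tr}C_\alpha^*Z$ collapses to $-\sqrt{2}\langle e_\alpha, XZ+Y\circ Z\rangle$. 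Assembling over $\alpha$ via the identification ${\bf n}_\alpha^*\leftrightarrow e_\alpha\in V_+\simeq{\mathbb O}$ then yields ${\bf p}^*(W,W)=-\sqrt{2}(XZ+Y\circ Z)$.

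There is no real mathematical obstacle here; the entire argument is a one-shot unwinding of definitions. The only care required is indexing discipline: tracking that the $b$-th row of each $B_\alpha^*,C_\alpha^*$ is the $\alpha$-th row of $-A_b/\sqrt{2}$ and $-A_b^{\#}/\sqrt{2}$ respectively, that $X$ and $Y$ live in the imaginary octonions while $Z$ is a full octonion, and that the $V_+$-basis identification matches the labeling of the normal directions ${\bf n}_\alpha^*$ to $M_-$. The diagonal block $S_0^*$ contributes the canonical $|X|^2-|Y|^2$ along ${\bf n}_0^*$, which is universal for focal submanifolds and is implicit in the statement; the formula in the proposition records only the non-trivial $V_+\simeq{\mathbb O}$-valued component, which carries the essential octonionic information that the rest of the paper exploits.
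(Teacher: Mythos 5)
Your proposal is correct and is essentially the paper's own argument: the paper's proof consists precisely of the two matrix-entry identities $\langle B_a^{*}(e_p),e_\alpha\rangle \propto \langle e_\alpha e_p,e_a\rangle$ and $\langle C_a^{*}(e_p),e_\mu\rangle \propto \langle e_\mu\circ e_p,e_a\rangle$, which you unwind in full from Lemma~\ref{lemma1}(2), Corollary~\ref{cor1} and the definition of $\circ$ via $A_a^{\#}$. Your bookkeeping of the $-1/\sqrt{2}$ stacking factor and the factor $2$ from the off-diagonal blocks, yielding the coefficient $-\sqrt{2}$, matches the paper, as does your remark that the formula records only the $V_{+}\simeq{\mathbb O}$-valued part of ${\bf p}^{*}$.
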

For $(m_1,m_2)=(3,4)$, one has the same formula by forgetting the orthogonal
complement of ${\mathbb H}$ in ${\mathbb O}$.
\begin{proof} It is an immediate consequence of $<B_{a}^{*}(e_p),e_\alpha>=<e_\alpha e_p,e_a>$
and $<C_a^{*}(e_p),e_\mu>=<e_\mu\circ e_p,e_a>$.
\end{proof}
                                                                          
Henceforth, we will mainly study the structure of isoparametric
hypersurfaces in the case when $(m_1,m_2)=(7,8)$.

\section{Octonian realization of the isoparametric hypersurfaces of OT-FKM type}
\subsection{Isoparametric hypersurfaces
constructed by Ferus Karcher and M\"{u}nzner}\label{sec5}

Let ${\mathbb R}^{32}$ 
be the direct sum
of four copies of ${\mathbb O}$. We
identify
$(0,0,-e_0,0)$ with $x\in M_{+}$; $\{(0,0,Y,0):Y\in {\rm Im}({\mathbb O})\}$ with
$V_0=V_{-}^{*}$; $(0,e_0,0,0)$ with ${\bf n}_0\in M_{+}$; and
$\{(0,X,0,0):X\in {\rm Im}({\mathbb O})\}$ with $V_{+}^{*}$. We identify
$V_{-}=V_0^{*}$ with
$(Z,0,0,Z)),Z\in{\mathbb O},$ and identify $V_{+}$,
which is the normal subspace perpendicular to ${\bf n}_0^{*}$ at $x^{*}$,
with $(W,0,0,-W)$.
The notation here is in accordance with
Lemma~\ref{lemma1} and Proposition~\ref{ppp2}.

Consider the orthogonal transformations
\begin{eqnarray}\label{Cliff}
\aligned
&P_{-1}:(A,X,Y,B)\mapsto (A,-X,Y,-B)\\
&P_a:(A,X,Y,B)\mapsto (-Xe_a,-A\overline{e}_a,-B\circ {\overline e}_a,-Y\circ e_a)
\endaligned
\end{eqnarray}
for $0\leq a\leq 7$.
%%Note that $Z+W=A$ and $Z-W=B$.
It is immediate that $P_iP_j+P_jP_i=2\delta_{ij}Id,-1\leq i,j\leq 7$. Therefore, the
symmetric Clifford system 
$P_{-1},P_0,\cdots,P_{7}$ over $M_{-}$ generates an isoparametric hypersurface $M$
constructed by Ferus, Karcher and M\"{u}nzner~\cite{DN2},~\cite{FKM}.

It is readily checked that
\begin{eqnarray}\label{2ndfund}
\aligned
&<P_a((Z,X,Y,Z)),(Z,X,Y,Z)>\\
&=2<XZ+Y\circ Z,e_a>,
\endaligned
\end{eqnarray}
and $<P_{-1}((Z,X,Y,Z)),(Z,X,Y,Z)>=-|X|^2+|Y|^2.$ That is, rescaling $Z$,
$-P_i,-1\leq i\leq 7,$ restricted to the tangent space to $M_{-}$ at $x^{*}$
give exactly the $2$nd fundamental form by Proposition~\ref{ppp2}. 

Recall $M_{-}$ is said to be of
{\em Condition B}~\cite[I]{OT} at $x^{*}$ if
\begin{equation}\label{CondB}
q_b^{*}=\sum_{a=-1}^{m_1}r_{ab} p_a^{*},
\end{equation}
where $r_{ab}=-r_{ba},-1\leq a,b\leq m_1;$ here, we set $q_{-1}^{*}=0$ and $p_{-1}^{*}=|X|^2-|Y|^2.$
An isoparametric hypersurface of OT-FKM type satisfies Condition B; it is
well known~\cite{FKM} that
\begin{equation}\label{B}
r_{ab}(v)=<P_a(v),n_b>,
\end{equation}
where $v$ is tangent to the focal submanifold, which is $M_{-}$ in our case, defined by the symmetric Clifford
matrices $P_a$ as the zero locus of $<P_a(x),x>=0,-1\leq a\leq 7,$
and $n_a$ are the normal basis elements.
With $n_a=(e_a,0,0,-e_a)/\sqrt{2}$ and $v=X+Y+Z$, it is straightforward to
find $r_{ab}=<e_a,Xe_b-Y\circ e_b>$ and so
\begin{equation}\label{3rdf}
{\bf q}^{*}(W,W,W)=X(Y\circ Z)-Y\circ(XZ),
\end{equation}
for a tangent vector $W=X\oplus Y\oplus Z$ at $x^{*}$, in the case of isoparametric hypersurfaces constructed by Ferus, Karcher and
M\"{u}nzner.

\subsection{Perturbing the mirror point $x^{*}$}
\begin{proposition}\label{deform} There is a point $x^{*}$ on $M_{-}$ of the
isoparametric hypersurfaces constructed by Ferus Karcher and M\"{u}nzner at which
either $a\circ b=ab$ or $a\circ b=ba$ for all $a,b\in{\mathbb O}$, up to
an isometry of the ambient Euclidean space.
\end{proposition}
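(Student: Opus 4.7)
The plan is to construct an ambient isometry of $\mathbb{R}^{32}=\mathbb{O}^{\oplus 4}$ that conjugates the Clifford system~\eqref{Cliff}, built from a normalized orthogonal multiplication $\circ$ with parameter $\alpha=\cos(\theta)e_0+\sin(\theta)e\in S^7$, into another Clifford system of the same shape but with $\alpha=e_0$. By Lemma~\ref{QS}, the two possible forms for $\circ$ specialize at $\alpha=e_0$ to $a\circ b=ab$ and $a\circ b=ba$ respectively, so the transported mirror point $\Phi(x^{*})$ will realize the desired conclusion.

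First, I would observe that the $\alpha$-parameter is a presentation-dependent quantity rather than an intrinsic invariant of the underlying FKM hypersurface: two Clifford systems of the form~\eqref{Cliff}, built from the same Clifford representation but different $\alpha$'s, give congruent hypersurfaces since they share the multiplicity pair and the Clifford-sphere type of a single irreducible representation of the relevant $C_{m_1+1}'$. Hence it suffices to find an explicit $\Phi\in O(32)$ with $\Phi^{-1}P_a\Phi$ again of the form~\eqref{Cliff} with $\alpha$ replaced by $e_0$. Motivated by the intertwining property $e_a\circ T(z)=T(e_az)$ with $T(z)=z\alpha$ from Lemma~\ref{QS}, I would look for $\Phi$ built from left and right octonian multiplication by $\alpha$, acting differently on the four $\mathbb{O}$-summands so as to compensate for the various occurrences of $\overline{e_a}$ versus $e_a$ in~\eqref{Cliff}; the transformation on the $A,X$-summands must match that on the $Y,B$-summands so that, after conjugation, every three-factor product collapses to a single octonian product.

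The main obstacle is the non-associativity of $\mathbb{O}$: the simplest block-diagonal candidate $\Phi(A,X,Y,B):=(A,X,Y\alpha,B\alpha)$ reduces $\Phi^{-1}P_a\Phi$ to an expression involving $(Y\alpha)(e_a\overline{\alpha})$ instead of the desired $Ye_a$, and these two fail to coincide in general. Repairing this requires systematic use of Artin's theorem (the subalgebra of $\mathbb{O}$ generated by two elements is associative) together with the Moufang identities, and it may be convenient to split into the two regimes of Lemma~\ref{comparison}, where either $(e_ae_b)e=\pm e_0$ or else $e_a,e_b,e_ae_b$ are all perpendicular to $e$, using $G_2$-symmetry to reduce arbitrary orthonormal pairs to one of these configurations. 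Once $\Phi$ is correctly identified, the congruent hypersurface $\Phi(M)$ admits the mirror point $\Phi(x^{*})$ at which, by Proposition~\ref{ppp2}, the multiplication $\circ$ is exactly the standard octonian multiplication in the first case of Lemma~\ref{QS} or its opposite in the second case.
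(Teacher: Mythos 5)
Your overall target---undoing the $\alpha$-twist by an orthogonal map built from multiplication by $\alpha$---is the right instinct, but the argument as written has two genuine gaps. First, the congruence claim you use to launch it (that two Clifford systems of the form~\eqref{Cliff} with different $\alpha$'s give congruent hypersurfaces ``since they share the multiplicity pair and the Clifford-sphere type'') is not a valid justification: for precisely these exceptional multiplicity pairs the paper stresses that \emph{incongruent} OT-FKM examples with the same multiplicities exist, so agreement of multiplicities proves nothing, and geometric equivalence of the two Clifford systems is exactly what would have to be established. Second, and more seriously, the construction of $\Phi$---which is the entire content of the proposition on your route---is left undone: you correctly observe that the block-diagonal candidate fails because $(Y\alpha)(e_a\overline{\alpha})\neq Ye_a$ in general, and then defer to ``once $\Phi$ is correctly identified.'' That is the step that would have to be supplied, and it is not clear it can be, since conjugating the $P_a$ forces you to fight the non-associativity of ${\mathbb O}$ head-on.

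The difficulty you run into is an artifact of trying to move the hypersurface rather than the point. The paper keeps the Clifford system~\eqref{Cliff} fixed and instead perturbs the mirror point: it takes an orthogonal $U$ with $U(z)\circ e_a=U(ze_a)$ (or $U(e_az)$)---which exists by the same equivalence-of-representations argument as in Lemma~\ref{QS}---and replaces the normal at $x^{\#}=(0,e_0,0,0)$ by ${\bf n}^{\#}=(0,0,-U(e_0),0)$. The tangent space at the resulting mirror point $x^{*}_{\bf n}$ then consists of vectors of the form $(Z,X,U(Y),U(Z))$, and in evaluating $\langle P_a(v),v\rangle$ on such vectors the intertwining property of $U$ together with its orthogonality converts every occurrence of $\circ$ into the ordinary octonian product (or its opposite), yielding the second fundamental form $-\sqrt{2}(XZ+YZ)$ or $-\sqrt{2}(XZ+ZY)$. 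Since only the quadratic form $\langle P_a(v),v\rangle$ on this special family of vectors is ever computed---never a conjugate $\Phi^{-1}P_a\Phi$---non-associativity never enters and no Moufang or Artin machinery is needed. If you want to salvage your approach, the fix is to build $U$ into the parametrization of the tangent space at a new point of $M_{-}$ rather than into an ambient conjugation of the Clifford system.
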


\begin{proof} Similar to Lemma~\ref{QS} we can apply an orthogonal
transformation $U$ such that
$$
U(z)\circ e_a=U(ze_a)\quad\text{or}\quad U(e_az)
$$
for all $a,z$.
%%, in~\eqref{Cliff}, when the $Y$ and $B$
%%components are transformed by $T$
%%we may assume either $z\circ e_a=ze_a,\forall a,z,$ or
%%$z\circ e_a=e_az,\forall a,z,$.
With $x^{\#}=(0,e_0,0,0)$ and ${\bf n}^{\#}=(0,0,n,0)$
for $n=-U(e_0)$, the normal
space to $M_{-}$ at $x^{*}_{\bf n}:=(x^{\#}+{\bf n}^{\#})/\sqrt{2}$ is spanned by 
\begin{eqnarray}\nonumber
\aligned
P_{-1}(x^{*}_{\bf n})=(0,-e_0,-U(e_0),0)/\sqrt{2},\quad\text{and}\\
P_a(x^{*}_{\bf n})=(-e_a,0,0,U(e_a))/\sqrt{2},\quad 0\leq a\leq 7,
\endaligned
\end{eqnarray}
whereas the
tangent vectors, being perpendicular to $x^{*}_{\bf n}$ and the normal vectors,
are thus of the form $(Z,X,U(Y),U(Z))$; therefore,
\begin{equation}\nonumber
\aligned
&-<P_a((Z,X,U(Y),U(Z)),(Z,X,U(Y),U(Z))>\\
&=-2<XZ+YZ,e_a>\quad\text{or}\quad -2<XZ+ZY,e_a>,
\endaligned
\end{equation}
for $0\leq a\leq 7$, give that the $2$nd fundamental
form at $x^{*}_{\bf n}$ is $-\sqrt{2}(XZ+YZ$, or $-\sqrt{2}(XZ+ZY)$
after rescaling $Z$.
\end{proof}

\subsection{Isoparametric hypersurfaces of the type constructed by Ozeki and Takeuchi}

Let ${\mathbb R}^{32}$ be identified as the direct sum
of four copies of ${\mathbb O}$. Let $x=(0,0,e_0,0)$ and at $x$ identify
$V_{+}$ as the first copy, $V_{-}$ as the second copy and the normal space as the fourth copy
of ${\mathbb O}$ in ${\mathbb R}^{32}$. Lastly, identify the imaginary
part of the third copy of ${\mathbb O}$ as $V_{0}$ at $x$. Define
\begin{eqnarray}\nonumber
\aligned
&P_0:(u,v,z,w)\mapsto (u,-v,w,z),\\
&P_a:(u,v,z,w)\mapsto (e_av,-e_au,e_aw,-e_az)
\endaligned
\end{eqnarray}
for $1\leq a\leq 7$. A calculation similar to the above one
gives that the symmetric Clifford system $P_0,P_1,\cdots,P_7$ over $M_{+}$
defines an isoparametric hypersurface M, where $x\in M_{+}$ is
of Condition A whose 2nd fundamental form is
\begin{equation}\nonumber
p_0= |u|^2-|v|^2,\quad p_a= 2<e_a, u{\overline v}>,1\leq a\leq 7.
\end{equation}
In particular, the orthogonal multiplication $\circ$ at $x^{\#}$ coincides with
the octonian multiplication. By~\cite{OT},~\cite{FKM}, we know $x$ is also of Condition B.
Indeed, with the normal basis $n_b=(0,0,0,e_b)$ and a
tangent vector $x=(u,v,z,0)$, where $u,v\in{\mathbb O}$ and
$z\in\text{Im}({\mathbb O})$, we calculate by~\eqref{CondB} to deduce
$r_{0b}=<z,e_b>,1\leq b\leq 7$ and $r_{ab}=-<e_az,e_b>, 0\leq a\neq b\leq 7$.
From this we obatin by~\eqref{B} 
\begin{eqnarray}\nonumber
\aligned
&q_0=2<z,u\overline{v}>,\\
&q_a= <z,e_a>(|u|^2-|v|^2-2<u,\overline{v}>)-2<ze_a,u\overline{v}>,
\endaligned
\end{eqnarray}
for $1\leq a\leq 7$~\cite[I, p 556]{OT}. 

Since $q_0$ gives
$A^{\#}_a,1\leq a\leq 7$, by Remark~\ref{rmk},
we see $A_a=A_a^{\#}=J_a,1\leq a\leq 7$. On the other hand,
Remark~\ref{quote}, to be given later, gives that
$$
{\bf q}^{*}=\sum_{a=0}^{m_1}w_aq_a^{*}
=<2z(u{\overline v})-2<u,v>z,w>
$$
with $w=\sum_{a=0}^{m_1}w_a e_a$. The identification
$X=w\in V_{+}^{*}\simeq {\rm Im}({\mathbb O}),Y=-z\in
V_{-}^{*}\simeq {\rm Im}({\mathbb O}),
Z=-v\in V_0^{*}$, and $W=u$ in the normal space to
$x^{*}\in M_{-}$
derives that, for a tangent vector $U=X\oplus Y\oplus Z$ and a normal vector
$W$ at $x^{*}$,
\begin{eqnarray}\nonumber
\aligned
&<{\bf q}^{*}(U,U,U),W>=\\
&<2Y(W{\overline Z})-2<W,Z>Y,X>
=<-2(YX)Z-2<X,Y>Z,W>\\
&=<2(XY)Z+2<X,Y>Z,W>
=<(XY)Z-(YX)Z,W>.
\endaligned
\end{eqnarray}
We thus arrive at
$$
{\bf q}^{*}(U,U,U)=(XY-YX)Z
$$
for a tangent vector $U=X\oplus Y\oplus Z$ at $x^{*}$. The fact that the 3rd fundamental form at $x$ of Condition A in the example of
Ozeki and Takeuchi is not linear in all
variables whereas the 3rd fundamental form is linear at $x^{*}$, in the cases
of both Ozeki-Takeuchi and Ferus-Karcher-M\"{u}nzner, in all variables points
to that it will be simpler to look at the mirror point $x^{*}$ instead.

\section{The $3$rd fundamental form at a mirror point on $M_{-}$}

Henceforth, we concentrate on $x^{*}\in M_{-}$. It is understood $(m_1,m_2)=(7,8)$.
In coordinate calculations we use $x_\alpha^{*},y_\mu^{*},z_p^{*}$ to denote
coordinates of $V_{+}^{*},V_{-}^{*},V_0^{*}$, respectively, so that
$X=\sum_{\alpha=1}^{m_1}x_\alpha^{*} e_\alpha$,
$Y=\sum_{\mu=1}^{m_1}y_\mu^{*} e_\mu$, and 
$Z=\sum_{p=0}^{m_1}z_p^{*} e_p$. We identify a normal vector perpendicular to
${\bf n}_0^{*}$ with
$W=\sum_{a=0}^{m_1}w_a e_a$.

\begin{lemma}\label{lemma2} At $x^{*}\in M_{-}$, we have $q_0^{*}=0$.
\end{lemma}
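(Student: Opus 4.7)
The plan is to compute $q_0^{*}$ directly from the expansion formula~\eqref{eq0.0}, now applied to $-F$ at the mirror point $x^{*}$, and to verify that every contribution to the coefficient of $w_0^{*}$ in the cubic-in-tangent part vanishes identically.

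First I would translate between coordinates at $x^{*}$ and at $x$. From $x^{*}=(x+{\bf n}_0)/\sqrt{2}$ and ${\bf n}_0^{*}=(x-{\bf n}_0)/\sqrt{2}$, any displacement in the $(x,{\bf n}_0)$-plane satisfies $t=(t^{*}+w_0^{*})/\sqrt{2}$ and $w_0=(t^{*}-w_0^{*})/\sqrt{2}$; Lemma~\ref{lemma1} then gives $w_a\leftrightarrow\xi_a^{*}\in V_{+}^{*}$ for $a\geq 1$, $z_p\leftrightarrow\eta_p^{*}\in V_{-}^{*}$, $y_\mu\leftrightarrow\zeta_\mu^{*}\in V_{0}^{*}$, and $x_\alpha\leftrightarrow u_\alpha$ with $u_\alpha$ the $V_{+}$-part of the normal space at $x^{*}$. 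To isolate $8q_0^{*}$ as a polynomial in the tangent variables, it suffices to set $t^{*}=0$ and $u_\alpha=0$ (equivalently $x_\alpha=0$) and read off the coefficient of $w_0^{*}$ in $-F$, now viewed as a polynomial in $w_0^{*},\xi_a^{*},\zeta_\mu^{*},\eta_p^{*}$. Under this substitution $t=w_0^{*}/\sqrt{2}$ and $w_0=-w_0^{*}/\sqrt{2}$, and we simply run through the nine groups of terms in~\eqref{eq0.0}.

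Six of these groups produce only even powers of $w_0^{*}$: the pieces $t^{4}$, $(2|y|^{2}-6|w|^{2})t^{2}$, $|y|^{4}$, $-6|y|^{2}|w|^{2}$, $|w|^{4}$ and $-2\sum p_a^{2}$ reduce to even polynomials in $w_0^{*}$ after setting $x_\alpha=0$, using Condition A at $x$ together with Corollary~\ref{cor1} (so that $p_0=-|\zeta^{*}|^{2}$ and $p_a\equiv 0$ for $a\geq 1$). The mixed term $8\sum_a p_a w_a t$ gives an a priori $w_0^{*}$-linear piece proportional to $\sum_{a\geq 1}p_a\xi_a^{*}$, but the same vanishing $p_a=2\sum_{\alpha,\mu}(A_a)_{\alpha\mu}x_\alpha y_\mu|_{x_\alpha=0}=0$ kills it. The correction $2\sum_{a,b}\langle\nabla p_a,\nabla p_b\rangle w_a w_b$ contributes to the $w_0^{*}$-linear part only through the cross indices $(0,k)$ and $(k,0)$, $k\geq 1$; but at $x_\alpha=0$ the gradient $\nabla p_0=(0,-2\zeta_\mu^{*},0)$ lies in $V_{-}$ while $\nabla p_k=(2\sum_\mu(A_k)_{\alpha\mu}\zeta_\mu^{*},0,0)$ lies in $V_{+}$, so $\langle\nabla p_0,\nabla p_k\rangle=0$.

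The only remaining source is $8\sum_a q_a w_a$, whose $w_0^{*}$-linear part is $-(8/\sqrt{2})\,q_0\,w_0^{*}$ evaluated at $(x_\alpha=0,\,y_\mu=\zeta_\mu^{*},\,z_p=\eta_p^{*})$. Here the calculation is closed off by Lemma~\ref{mirror} and Remark~\ref{rmk}: $q_0=\sum_p H_p(x,y)\,z_p$ with $H_p=2\sum_{\alpha,\mu}(A_p^{\#})_{\alpha\mu}x_\alpha y_\mu$ bilinear in $x_\alpha$ and $y_\mu$. Setting $x_\alpha=0$ forces every $H_p$ to vanish, so $q_0=0$ under this substitution and the entire $w_0^{*}$-coefficient in $-F$ is zero; hence $q_0^{*}=0$. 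The only real obstacle is the bookkeeping — checking term by term that every potential $w_0^{*}$-linear contribution is annihilated, either by the vanishing of the $p_a$'s for $a\geq 1$ (Condition A at $x$) or by the bilinearity of the $H_p$'s in $x$ and $y$ (the mirror Condition A at $x^{\#}$).
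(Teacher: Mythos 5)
Your proof is correct, but it follows a different route from the paper's. The paper disposes of the lemma in three lines by an indirect, structural argument: Remark~\ref{rmk} identifies the coefficients of $q_0$ at a point with the entries of the mirror blocks $A_a^{\#}$, equation~\eqref{eq0.9} (the third identity of~\eqref{eq0.6}) forces $A_a^{\#}=0$ whenever $A_a=0$, and Lemma~\ref{lemma1}(2) supplies $A_a^{*}=0$ at $x^{*}$; applying this with $F$ replaced by $-F$ gives $q_0^{*}=0$. You instead extract $8q_0^{*}$ directly as the coefficient of $(w_0^{*})^{1}$ in the expansion of $-F$ at $x^{*}$ after setting $t^{*}=0$ and $x_\alpha=0$, and verify term by term that every potential contribution dies — the terms $8\sum_{a\geq1}p_aw_at$ and the $(0,k)$ cross terms of $2\sum\langle\nabla p_a,\nabla p_b\rangle w_aw_b$ by Condition A at $x$, and the decisive term $8q_0w_0$ because $q_0$ is trilinear in $x_\alpha,y_\mu,z_p$ (Ozeki--Takeuchi's Lemma 15(ii), already invoked in Lemma~\ref{mirror}) and hence vanishes at $x_\alpha=0$. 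What your version buys is self-containedness at the point where it matters: you only use the trilinear structure of $q_0$ at $x$, where Condition A genuinely holds, rather than inferring $q_0^{*}=0$ from $A_a^{*\#}=0$, which tacitly presupposes the analogous structure of $q_0^{*}$ at $x^{*}$. The cost is the bookkeeping through all nine groups of terms, which the paper's shortcut avoids. Both arguments rest on the same inputs (the expansion formula, Lemma~\ref{lemma1}, and the trilinearity of $q_0$), so the difference is one of execution rather than of substance.
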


\begin{proof} This follows from Remark~\ref{rmk}. There, we see
that $q_0$ at $x\in M_{+}$ determines
$A_a^{\#},1\leq a\leq m_1$, and vice versa. Hence, if
$A_a=0,1\leq a\leq m_1,$ then~\eqref{eq0.9} derives that
$A_a^{\#}=0,1\leq a\leq m_1$,
so that $q_0=0$. Now replace $F$ by $-F$ and $x^{\#}$ by $x^{*}$ and observe
that $A_a^{*}=0,1\leq a\leq m_1$ by the second item of Lemma~\ref{lemma1}. 
\end{proof}

Now that $q_0^{*}=0$, there will be no confusion for us to change our notation from
now on to rename $q_1^{*},\cdots,q_{m_2}^{*}$, where $m_2=m_1+1$, at $x^{*}$ to be
$q_0^{*},\cdots,q_{m_1}^{*}$, so that the $3$rd fundamental form can be written as
${\bf q}^{*}=\sum_{a=0}^{m_1} q_a^{*} e_a$ in accordance with the standard octonian basis
$e_0,e_1,\cdots,e_{m_1}$.

\begin{lemma}\label{sub} At $x^{*}\in M_{-}$, the
$3$rd fundamental form ${\bf q}^{*}$ satisfies
\begin{equation}\label{sub'}
|{\bf q}^{*}(U,U,U)|=|X(Y\circ Z)-Y\circ (XZ)|
\end{equation}
for a tangent vector $U=X\oplus Y\oplus Z$ at $x^{*}$.
\end{lemma}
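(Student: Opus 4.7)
The plan is to extract the squared norm $|{\bf q}^{*}(U,U,U)|^2=\sum_{a=0}^{m_1}(q_a^{*}(U))^2$ from the eighth of Ozeki and Takeuchi's ten defining identities for an isoparametric hypersurface~\cite[I, pp.~529--530]{OT}, and then verify that the formula it produces matches $|X(Y\circ Z)-Y\circ(XZ)|^2$ by a direct octonionic calculation.

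First I would apply the expansion formula~\eqref{eq0.0} to $-F$ at the point $x^{*}\in M_{-}$, so that the ten identities of Ozeki and Takeuchi are available at $x^{*}$. The eighth identity expresses $\sum_a(q_a^{*})^2$ as an explicit polynomial in the components of $p^{*}$ and their first derivatives, with no $q$ terms on the right-hand side. Substituting the expression ${\bf p}^{*}(W,W)=-\sqrt{2}(XZ+Y\circ Z)$ from Proposition~\ref{ppp2} reduces the right-hand side to a sum of inner products built from $X$, $Y$, $Z$ and the products $XZ$, $Y\circ Z$.

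I would then expand using the inner product identities~\eqref{eq-1} and~\eqref{eq0}, together with their analogues for the twisted product $\circ$ guaranteed by Lemma~\ref{QS}. The pure contributions of $|XZ|^2$ and $|Y\circ Z|^2$ combine with the cross term $\langle X(Y\circ Z),Y\circ(XZ)\rangle$, which arises when one applies the Moufang-type relation $x(\overline{y}z)+y(\overline{x}z)=2\langle x,y\rangle z$ to transfer a factor between the two multiplications. After regrouping, the right-hand side assembles into exactly $|X(Y\circ Z)-Y\circ(XZ)|^2$, and taking square roots gives the lemma.

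The main obstacle will be the careful bookkeeping required to apply~\eqref{eq-1} and~\eqref{eq0} in the presence of two distinct multiplications: the octonionic product governs $XZ$ and $Y\circ(XZ)$, while the twisted product $\circ$ governs $Y\circ Z$ and $X\cdot(Y\circ Z)$ in mixed positions. Every shuffle of a factor across an inner product must track which operation is in force on each side, but Lemma~\ref{QS} ensures that~\eqref{eq-1} and~\eqref{eq0} carry over verbatim to $\circ$, so the algebraic identities close and the two multiplications cooperate to produce a single clean expression.
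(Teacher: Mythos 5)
Your key move is the same as the paper's: apply the expansion of $-F$ at $x^{*}$ so that the eighth Ozeki--Takeuchi identity, $16|{\bf q}^{*}|^2=16G(|X|^2+|Y|^2+|Z|^2)-|\nabla G|^2$ with $G=\sum_{a=-1}^{m_1}(p_a^{*})^2$, expresses $|{\bf q}^{*}|$ purely in terms of ${\bf p}^{*}$, and then feed in ${\bf p}^{*}=-\sqrt{2}(XZ+Y\circ Z)$ from Proposition~\ref{ppp2}. Where you diverge is the last step. You propose to evaluate $16G(|X|^2+|Y|^2+|Z|^2)-|\nabla G|^2$ by brute force and show it equals $16|X(Y\circ Z)-Y\circ(XZ)|^2$; the paper never does this computation. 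Instead it observes that the Clifford system~\eqref{Cliff} of Section~5 is built for an \emph{arbitrary} normalized orthogonal multiplication $\circ$, so for the very same $\circ$ there exists a genuine (FKM) isoparametric hypersurface whose second fundamental form at its mirror point is again $-\sqrt{2}(XZ+Y\circ Z)$ by~\eqref{2ndfund} and whose third fundamental form is $X(Y\circ Z)-Y\circ(XZ)$ by~\eqref{3rdf}; since the right-hand side of the eighth identity depends only on ${\bf p}^{*}$, it must already equal $16|X(Y\circ Z)-Y\circ(XZ)|^2$, and the lemma follows with no octonionic bookkeeping at all. Your route would also work in principle (the comparison argument shows the needed polynomial identity is true for every normalized $\circ$), but be aware that the decisive step in your write-up --- ``after regrouping, the right-hand side assembles into exactly $|X(Y\circ Z)-Y\circ(XZ)|^2$'' --- is precisely the nontrivial content: $|\nabla G|^2$ involves gradients in all $22$ tangent variables and a genuinely delicate interplay of the two multiplications, and as it stands this is asserted rather than proved. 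Either carry that expansion out in full, or adopt the paper's comparison-with-the-model finesse, which is what makes the proof short.
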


\begin{proof} Recall the identity for an isoparametric hypersurface~\cite[I, p 530]{OT}
\begin{equation}\label{Defining}
16|{\bf q}^{*}|^2 = 16G(|X|^2+|Y|^2+|Z|^2)-|\nabla G|^2,
\end{equation}
where $G=\sum_{a=-1}^{m_1}(p_a^{*})^2$, that an isoparametric hypersurface must
satisfy. It is understood that $p_{-1}^{*}=|X|^2-|Y|^2$.

For the isoparametric hypersurfaces of the type constructed by Ferus
Karcher and M\"{u}nzner, we know the left hand
side of~\eqref{Defining} is $|X(Y\circ Z)-Y\circ(XZ)|$ by~\eqref{3rdf} .
On the other hand, the right hand side of~\eqref{Defining} depends only on
the $2$nd fundamental form, which is exactly $-\sqrt{2}(XZ+Y\circ Z)$ for the
type constructed by Ferus, Karcher and M\"{u}nzner by~\eqref{2ndfund} and in general by Proposition~\ref{ppp2}.
\end{proof}

\begin{remark} When $m_1=1,$ the underlying normed algebra is ${\mathbb C}$.
Therefore, Lemma~{\rm \ref{sub}} implies ${\bf q}^{*}=0$. 

When $m_1=2$, Ozeki and Takeuchi established~\cite[II, p 54, Case $(B_1)$]{OT} that one can
choose appropriate coordinates so that ${\bf p}^{*}$ is identical with that of the homogeneous
example. The same argument as in Lemma~{\rm \ref{sub}} then implies
that ${\bf q}^{*}=0$ as it is so for the homogeneous example~\cite[II, p41]{OT},
so that the isoparamentric hypersurface is exactly the homogeneous one.
\end{remark}

\begin{proposition}\label{3.4-5} For $0\leq a\leq m_1$ at $x^{*}$, we have
$q_a^{*}=\sum_{\alpha\mu p} q_{a}^{\alpha\mu p} x_\alpha^{*} y_\mu^{*} z_p^{*}$ for some
coefficients
$q_{a}^{\alpha\mu p}$. That is, ${\bf q}^{*}$ is homogeneous of degree $1$
in $X,Y,Z$.
\end{proposition}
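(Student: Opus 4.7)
The plan is to derive the claim directly from the identity
\[
\sum_{a=0}^{m_1}(q_a^*)^2 \;=\; \bigl|X(Y\circ Z)-Y\circ(XZ)\bigr|^2,
\]
which follows by squaring the conclusion of Lemma~\ref{sub} and interpreting both sides as polynomials in the tangent coordinates $(x_\alpha^*, y_\mu^*, z_p^*)$. Since the $3$rd fundamental form is a cubic form on the tangent space to $M_{-}$ at $x^{*}$, each $q_a^*$ is a homogeneous real polynomial of degree $3$ in these coordinates, and the proposition amounts to saying that it is of tri-type $(1,1,1)$.

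The crucial feature of the right-hand side is that it is visibly trilinear in $(X, Y, Z)$: each of the two summands $X(Y\circ Z)$ and $Y\circ(XZ)$ is linear in each of $X$, $Y$, $Z$ separately. Consequently, the right-hand side vanishes identically whenever any one of $X$, $Y$, $Z$ is set to zero. Specializing the polynomial identity above to $X = 0$, we obtain $\sum_a q_a^*(0, Y, Z)^2 \equiv 0$, and since the $q_a^*$ are real-valued polynomials, this forces $q_a^*(0, Y, Z) \equiv 0$ for every $a$; repeating the same argument with $Y = 0$ and with $Z = 0$, we conclude that every $q_a^*$ vanishes whenever any one of $X$, $Y$, $Z$ is zero.

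It then follows that every monomial appearing in $q_a^*$ must involve at least one of the $x_\alpha^*$ variables, at least one of the $y_\mu^*$ variables, and at least one of the $z_p^*$ variables. Combined with the degree-$3$ homogeneity, this forces each monomial to be of exact tri-type $(1,1,1)$, i.e., of the form $x_\alpha^* y_\mu^* z_p^*$, yielding the claimed expansion $q_a^* = \sum q_a^{\alpha\mu p} x_\alpha^* y_\mu^* z_p^*$.

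There is no genuine obstacle in this argument once Lemma~\ref{sub} is in hand: the entire proposition reduces to the elementary observation that a degree-$3$ polynomial in three groups of variables which vanishes whenever any single group is set to zero must be trilinear in those groups. The only nontrivial input is the norm identity~\eqref{sub'}, which was already established via the defining equation~\eqref{Defining} by comparing the general case against the Ferus--Karcher--M\"unzner model, where the $3$rd fundamental form is known explicitly from~\eqref{3rdf}.
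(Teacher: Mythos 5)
Your proof is correct, and it takes a genuinely different route from the paper's. The paper does not use Lemma~\ref{sub} here at all: it invokes the fifth Ozeki--Takeuchi identity $\langle\nabla p_i^{*},\nabla q_j^{*}\rangle+\langle\nabla p_j^{*},\nabla q_i^{*}\rangle=0$ with $i=-1$, where $p_{-1}^{*}=\sum_\alpha (x_\alpha^{*})^2-\sum_\mu (y_\mu^{*})^2$ and $q_{-1}^{*}=0$ by Lemma~\ref{lemma2}; by Euler's identity this kills every tri-homogeneous component of $q_a^{*}$ whose degrees in the $x^{*}$- and $y^{*}$-variables differ, leaving only the types $(1,1,1)$ and $(0,0,3)$, and the latter is then excluded by citing Lemma 15(i) of Ozeki--Takeuchi. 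Your argument instead squeezes everything out of the eighth identity, already packaged as the norm identity~\eqref{sub'}: since $\sum_a (q_a^{*})^2$ equals a polynomial that is visibly trilinear in $(X,Y,Z)$, restricting to $X=0$ (resp.\ $Y=0$, $Z=0$) and using that a sum of squares of real polynomials vanishes only if each summand does, you force every monomial to contain one variable from each group, hence to be of type $(1,1,1)$ by degree count. This is self-contained modulo Lemma~\ref{sub} and avoids both the fifth identity and the external citation of Ozeki--Takeuchi's Lemma 15(i); the trade-off is that it leans on the rather strong fact that $|{\bf q}^{*}|^2$ equals a specific trilinear model, whereas the paper's tri-degree bookkeeping via the fifth identity is a lighter input and is the same technique reused later (e.g.\ in Lemma~\ref{imp}), so the paper gets expository mileage out of introducing it here.
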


\begin{proof} We record the equation from Ozeki and Takeuchi~\cite[I, p 529]{OT},
with respect to $-F$, that
\begin{equation}\label{defining'}
<\nabla p_i^{*}, \nabla q_j^{*}>+<\nabla p_j^{*}, \nabla q_i^{*}>=0
\end{equation}
for all $-1\leq i\neq j\leq m_1$. Picking $i=-1$ and $j=a$, we get
\begin{equation}\label{eq2}
<\nabla p_{-1}^{*},\nabla q_a^{*}>=0
\end{equation}
since $q_{-1}^{*}=0$ by Lemma~\ref{lemma2}. Note that
$
p_{-1}^{*}=\sum_{\alpha}(x_\alpha^{*})^2
-\sum_{\mu}(y_{\mu}^{*})^2.
$

For the component
$
\sum_{\mu\nu p}
q_{a}^{\alpha\beta p}x_\alpha^{*} x_\beta^{*} z_p^{*}
$
of $q_a^{*}$, where $\alpha,\beta$ are in the same index range over $V_{+}^{*}$,
the left hand side of~\eqref{eq2} gives
$-4\sum_{\alpha\beta p}q_{a}^{\alpha\beta p}x_\alpha^{*} x_\beta^{*} z_p^{*}
$
(Euler's identity for
homogeneous polynomials).
Similarly for the component
$
\sum_{\alpha pq}q_{a}^{\alpha pq}x_\alpha^{*} z_p^{*} z_q^{*},
$
where $p,q$ are in the same index range over $V_0^{*}$, the left hand side of~\eqref{eq2} derives
$
2\sum_{\alpha pq} q_{a}^{\alpha pq}x_\alpha^{*} z_p^{*} z_q^{*},
$
etc. The vanishing of the right hand side of~\eqref{eq2} therefore
shows that all those components, exactly two of whose coordinates are
in the same index range, are zero. The same reasoning gives zero to the components
whose coordinates are either all in the $\alpha$-range, or all in the $\mu$-range (over $V_{-}^{*}$).
The
only component of repeated ranges not accounted for by this procedure is thus of the form
$
\sum_{pqr}q_{a}^{pqr}z_p^{*} z_q^{*} z_r^{*}
$
with $p,q,r$ in the same index range. However, Lemma 15 (i) of~\cite[I, p 537]{OT}
asserts that such components cannot exist.
\end{proof}

\begin{remark}\label{quote} ${\bf q}^{*}$ at $x^{*}\in M_{-}$ is determined
by collecting the part of $\bf q$ at $x\in M_{+}$
linear in all variables. Explicitly, since ${\bf q}^{*}$ is of degree $1$ in
$X,Y,Z$, the term $8\sum_{a=0}^{m_1} q_a^{*}w_a^{*}$ is of the form
$8\sum_{\alpha\mu p a}q^{\alpha\mu p}_{a} x_\alpha^{*} y_\mu^{*} z_p^{*}
w_a^{*}$, which is also linear in $x_\alpha,y_\mu,z_p,w_a$.
This is because by our convention, $x_\alpha,y_\mu,z_p,w_a$ parametrize,
respectively, $V_{+},V_{-},V_0$ and the normal space to
$x\in M_{+}$; we know by the first item of Lemma~{\rm \ref{lemma1}} that
$x_{\alpha}^{*}=w_\alpha,y_\mu^{*}=z_\mu, 1\leq \alpha,\mu\leq m_1$, and
$z_p^{*}=x_p,w_a^{*}=x_a,0\leq a,p\leq m_1$.
However, a glance at~\eqref{eq0.0} shows that the only term
of $F$ that contributes to items linear in $x_\alpha,y_\mu,z_p,w_a$ comes from
$8\sum_{a=1}^{m_1} q_a w_a$.
\end{remark}

We denote ${\bf q}^{*}$ by ${\bf q}^{*}(X,Y,Z)$, where
$X\in V_{+}^{*},Y\in V_{-}^{*}$ and $Z\in V_{0}^{*}$; thanks to Proposition~\ref{3.4-5}
we see that 
${\bf q}^{*}$ is a multilinear form in $X,Y,Z$.
We extend ${\bf q}^{*}(X,Y,Z)$ by requiring that ${\bf q}^{*}(e_0,Y,Z)=0$ and
${\bf q}^{*}(X,e_0,Z)=0$ for all $X,Y\in{\mathbb O}$. This is well-defined as
the right hand side of~\eqref{sub'} is $0$ if either $X=e_0$ or
$Y=e_0$. With this extension~\eqref{sub'} continues to hold.

\begin{lemma}\label{imp}
For $0\leq a,p\leq m_1$ and $X,Y\in{\mathbb O}$, we have
\begin{eqnarray}\label{identity1}
\aligned
&<{\bf q}^{*}(X,Y,e_a),e_a>=0,\\
&<{\bf q}^{*}(X,Y,e_0),X>=<{\bf q}^{*}(X,Y,e_0),Y>=0,\\
&<{\bf q}^{*}(e_a,Y,e_p),e_a>=-<{\bf q}^{*}(e_a{\overline e}_p,Y,e_0),e_a>,\\
&<{\bf q}^{*}(X,e_a,e_p),e_a>=-<{\bf q}^{*}(X,e_a\circ{\overline e}_p,e_0),e_a>.\\
&<{\bf q}^{*}(e_a,Y,e_a),e_p>=-<{\bf q}^{*}(e_p{\overline e}_a,Y,e_0),e_a>,\\
&<{\bf q}^{*}(X,e_a,e_a),e_p>=-<{\bf q}^{*}(X,e_p\circ{\overline e}_a,e_0),e_a>.
\endaligned
\end{eqnarray}
\end{lemma}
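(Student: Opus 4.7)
The plan is to derive all six identities in Lemma \ref{imp} from the fifth of Ozeki--Takeuchi's ten defining equations \cite[I, pp 529-530]{OT} applied to $-F$ at $x^*$, i.e., the identity already recorded in \eqref{defining'} as $\langle \nabla p_i^*, \nabla q_j^*\rangle + \langle \nabla p_j^*, \nabla q_i^*\rangle = 0$ for $-1 \le i \ne j \le m_1$. Into this I substitute the explicit forms $p_{-1}^* = |X|^2 - |Y|^2$ and $p_a^* = -\sqrt{2}\langle XZ + Y\circ Z, e_a\rangle$ from Proposition \ref{ppp2}, together with $q_{-1}^* = 0$ from Lemma \ref{lemma2}. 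Because $\mathbf{q}^*$ is trilinear in $(X, Y, Z)$ by Proposition \ref{3.4-5}, each $\langle \nabla p_i^*, \nabla q_j^*\rangle$ is a cubic polynomial in $(X, Y, Z)$; moreover, since each partial derivative of $p_a^*$ depends on exactly one of $X, Y, Z$ while each partial derivative of $q_a^*$ depends on exactly the other two, the cubic splits into three structural pieces: one cubic in $(X, Y)$, one linear in $Y$ and quadratic in $Z$, and one linear in $X$ and quadratic in $Z$. Each piece must vanish separately, and this separation is the workhorse of the proof.

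For identities (1) and (2), I would examine the cubic-in-$(X, Y)$ piece of \eqref{defining'} with $(i, j) = (a, b)$, $a \ne b$ in $\{0, \ldots, m_1\}$. Using the adjoint relations $\langle Xe_p, e_a\rangle = \langle X, e_a\overline{e}_p\rangle$ from \eqref{eq-1} and their $\circ$-analogues from Lemma \ref{QS}, the coefficient matching produces a relation among the $q_a^{\alpha\mu p}$ that, upon specializing $Z = e_a$ (for identity (1)) or $Z = e_0$ (for identity (2)), yields the stated vanishings. The extension conventions $\mathbf{q}^*(e_0, Y, Z) = \mathbf{q}^*(X, e_0, Z) = 0$ handle the remaining degenerate cases.

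Identities (3)--(6) arise from the other two pieces of \eqref{defining'}. For (3) and (5), I use the (linear in $Y$, quadratic in $Z$) piece with $(i, j) = (b, a)$ in $\{0,\ldots,m_1\}$ and match the coefficient of $y_\mu^* z_p^* z_q^*$: setting one of $p, q$ equal to $a$ lets the $\nabla p_b^*$-factor select a clean term, and after converting via $\langle e_\alpha Z, e_a\rangle = \langle Z, \overline{e}_\alpha e_a\rangle$ one recognizes the stated right-hand side. Identities (4) and (6) are parallel, arising from the (linear in $X$, quadratic in $Z$) piece with the roles of $X$ and $Y$ (and of juxtaposition and $\circ$) interchanged, using Lemma \ref{QS}'s expression of $\circ$ in terms of octonion multiplication and $\alpha$.

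The main obstacle is the algebraic bookkeeping: cleanly separating the polynomial into its three structural pieces, identifying the correct monomial coefficient for each of the six assertions, and then rewriting the resulting octonion and $\circ$-products using \eqref{eq-1} and Lemma \ref{QS} into the advertised form. Particular care is required because of the non-associativity of both products on $\mathbb{O}$, and because the index $p$ for $V_0^*$ ranges over $\{0, 1, \ldots, m_1\}$ while $\alpha, \mu$ for $V_\pm^*$ range over $\{1, \ldots, m_1\}$, so $e_0$ plays an asymmetric role across the three slots of $\mathbf{q}^*$ and must be handled on its own.
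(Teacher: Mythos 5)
Your overall strategy coincides with the paper's: substitute $p_{-1}^{*}=|X|^2-|Y|^2$, $p_a^{*}$ from Proposition~\ref{ppp2} and $q_{-1}^{*}=0$ into the fifth Ozeki--Takeuchi equation \eqref{defining'}, split the resulting cubic according to its tri-degree in $(X,Y,Z)$, match coefficients, and rewrite with \eqref{eq-1} and Lemma~\ref{QS}; your assignment of identities (3),(5) to the piece linear in $Y$ and quadratic in $Z$, and of (4),(6) to the piece linear in $X$ and quadratic in $Z$, is exactly what the paper does. The gap is in your treatment of the first two identities. You propose to get them from the cubic-in-$(X,Y)$ piece ``upon specializing $Z=e_a$ (for (1)) or $Z=e_0$ (for (2))'', but that piece comes from the $\partial_{z_p}$-derivatives: $Z$ has been differentiated away, so there is no $Z$ left to specialize. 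What that piece actually yields after the coefficient matching you describe is $\langle{\bf q}^{*}(X,Y,Z),XZ\rangle=0$ and $\langle{\bf q}^{*}(X,Y,Z),Y\circ Z\rangle=0$, i.e.\ essentially the orthogonality $\langle{\bf p}^{*},{\bf q}^{*}\rangle=0$ of Corollary~\ref{anti}; in every term of that piece the third slot of ${\bf q}^{*}$ is tied to $X$ (or to $Y$ via $\circ$) in the form $\overline{X}e_a$, and since $X,Y$ are confined to ${\rm Im}({\mathbb O})$ (the extensions ${\bf q}^{*}(e_0,Y,Z)={\bf q}^{*}(X,e_0,Z)=0$ are conventions, not identities you may polarize against), you cannot force that slot to equal $e_a$ except in the excluded case $X\propto e_0$. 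So the vanishing $\langle{\bf q}^{*}(X,Y,e_a),e_a\rangle=0$ is simply not extractable from the piece you chose by the specialization you describe.

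The paper obtains (1) and (2) from the same source as (3) and (5): the part linear in $Y$ and quadratic in $Z$, i.e.\ \eqref{identity4}, where $Z$ is a genuinely free variable ranging over all of ${\mathbb O}$ (so setting $Z$-entries equal to $e_0$ or $e_a$ is legitimate). Setting $q=a=b$ there kills two of the four sums (they reduce to $\langle{\bf q}^{*}(e_0,Y,e_p),e_a\rangle$), and in the surviving sums the coefficient $\langle e_\alpha e_p,e_a\rangle$ singles out $e_\alpha\parallel e_a\overline{e}_p$; letting $p$ vary sweeps the first slot through the whole basis and gives identity (1) for all $X$. Setting $a=b$ and $p=q=0$ gives $\langle{\bf q}^{*}(e_a,Y,e_0),e_a\rangle=0$, which is then promoted to arbitrary imaginary $X$ by the transitivity of the automorphism group of ${\mathbb O}$ on the unit imaginary sphere --- a step your outline also omits; the $Y$-part of (2), like (4) and (6), comes from the companion piece linear in $X$ and quadratic in $Z$. (Your route can be salvaged for identity (2) by taking the diagonal case $a=b$ of the $\partial_z$-piece with $X=e_a$, resp.\ $Y=e_a$, but identity (1) genuinely requires the $Z$-quadratic pieces.)
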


\begin{proof} Setting $i=a,j=b$ in~\eqref{defining'} and considering
the homogeneous part in $Y$ and $Z$ only, we obtain
\begin{eqnarray}\nonumber
\aligned
&\sum_{\alpha=0}^{m_1}<{\bf q}^{*}(e_\alpha,Y,Z),e_a><e_\alpha Z,e_b>\\
&+<{\bf q}^{*}(e_\alpha,Y,Z),e_b><e_\alpha Z,e_a>
=0.
\endaligned
\end{eqnarray}
Equivalently, it is
\begin{eqnarray}\label{identity4}
\aligned
&\sum_{\alpha=0}^{m_1}<{\bf q}^{*}(e_\alpha,Y,e_p),e_a><e_\alpha e_q,e_b>\\
&+\sum_{\alpha=0}^{m_1}<{\bf q}^{*}(e_\alpha,Y,e_q),e_a><e_\alpha e_p,e_b>\\
&+\sum_{\alpha=0}^{m_1}<{\bf q}^{*}(e_\alpha,Y,e_p),e_b><e_\alpha e_q,e_a>\\
&+\sum_{\alpha=0}^{m_1}<{\bf q}^{*}(e_\alpha,Y,e_q),e_b><e_\alpha e_p,e_a>=0.
\endaligned
\end{eqnarray}

Setting $q=a=b$ in~\eqref{identity4}, we see the first and the third sums
on the left are $0$, since they are simplified to $<q^{*}(e_0,Y,e_p),e_a>$.
Hence we obtain
$<{\bf q}^{*}(e_\alpha,Y,e_a),e_a>=0,$
where $e_\alpha$ is parallel to $e_a{\overline e}_p$ for any $p$.
Since $e_a{\overline e}_p$ runs through $e_0,\cdots,e_{m_1}$ when we vary $p$, we see
$<{\bf q}^{*}(e_\alpha,Y,e_a),e_a>=0$ for all $\alpha$. That is,
\begin{equation}\label{identity5}
<{\bf q}^{*}(X,Y,e_a),e_a>=0
\end{equation}
for all $X,Y,e_a$. In particular, the first identity of~\eqref{identity1}
is true.

On the other hand, setting
$a=b$ and $p=q=0$ we deduce the identity
$<{\bf q}^{*}(e_a,Y,e_0),e_a>=0$
for all $a$, which implies that

\begin{equation}\label{identity2}
<{\bf q}^{*}(X,Y,e_0),X>=0
\end{equation}
for all $X\in {\rm Im}({\mathbb O})$,
because any unit imaginary $X$ can serve as $e_a$, for some $a\neq 0,$ since the
group of automorphism of the normed algebra is transitive on the unit
imaginary sphere.
It follows from~\eqref{identity2},~\eqref{identity5}
for $a=0$,
and 
${\bf q}^{*}(e_0,Y,Z)=0$
that $<{\bf q}^{*}(X,Y,e_0),X>=0$ for all $X,Y\in{\mathbb O}$. Hence,
the second identity of~\eqref{identity1} is true.

The third identity of~\eqref{identity1} follows from setting $a=b$
and $q=0$.

The fifth identity comes from setting $p=b$ and $q=0$ and employing~\eqref{identity5}.

The fourth and sixth identities are derived from an equation similar to~\eqref{identity4} 
when, in~\eqref{defining'}, we look at the homogeneous part in $X$ and $Z$ only.
\end{proof}

\begin{corollary}\label{class} For $X,Y\in {\rm Im}({\mathbb O})$,
\begin{eqnarray}\nonumber
\aligned
&<{\bf q}^{*}(X,Y,Z),Z>=0,\quad Z\in{\rm Im}({\mathbb O})\;\;{\rm or}\;\;Z=e_0,\\
&<{\bf q}^{*}(X,Y,e_0),X>=<{\bf q}^{*}(X,Y,e_0),Y>=0,\\
&<{\bf q}^{*}(X,Y,Z),X>=-<{\bf q}^{*}(X{\overline Z},Y,e_0),X>,\quad Z\in{\mathbb O},\\
&<{\bf q}^{*}(X,Y,Z),Y>=-<{\bf q}^{*}(X,Y\circ {\overline Z},e_0),Y>,\quad Z\in{\mathbb O},\\
&<{\bf q}^{*}(X,Y,X),Z>=<{\bf q}^{*}(ZX,Y,e_0),X>,\quad Z\in{\mathbb O},\\
&<{\bf q}^{*}(X,Y,Y),Z>=<{\bf q}^{*}(X,Z\circ Y,e_0),Y>,\quad Z\in{\mathbb O}.
\endaligned
\end{eqnarray}
\end{corollary}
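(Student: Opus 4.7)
The plan is to derive each of the six identities in the Corollary as the trilinear extension of the corresponding basis-vector identity in Lemma~\ref{imp}. By Proposition~\ref{3.4-5}, ${\bf q}^*$ is trilinear in $X, Y, Z$; expanding $X=\sum x_\alpha e_\alpha$, $Y=\sum y_\mu e_\mu$, $Z=\sum z_p e_p$ in the standard octonian basis, each Corollary identity becomes a polynomial identity in the coordinates whose verification reduces to (i) the corresponding Lemma~\ref{imp} identity on the diagonal and (ii) its symmetric polarization off the diagonal.

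For identity~(1): when $Z=e_0$ this is the $a=0$ case of Lemma~\ref{imp}, identity~1; when $Z=\sum z_p e_p$ is imaginary, the vanishing of $<{\bf q}^*(X,Y,Z),Z>$ as a quadratic form in the $z_p$ requires both the diagonal identity $<{\bf q}^*(X,Y,e_p),e_p>=0$ and the polarization $<{\bf q}^*(X,Y,e_p),e_q>+<{\bf q}^*(X,Y,e_q),e_p>=0$ for $p\neq q$. The polarization is obtained by returning to the full equation~\eqref{identity4} without invoking the specialization $q=a=b$ used in the proof of Lemma~\ref{imp}, retaining a bilinear rather than diagonal antisymmetry. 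Identity~(2) is the bilinear extension over $X,Y\in{\rm Im}({\mathbb O})$ of Lemma~\ref{imp} identity~2, handled identically via polarization.

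For identities~(3) and~(5), the same expansion procedure yields polarized versions of Lemma~\ref{imp} identities~3 and~5 respectively. For~(3), substituting $Z=\sum z_p e_p$ and $X=\sum x_\alpha e_\alpha$ on both sides reduces the claim to $<{\bf q}^*(e_\alpha,Y,e_p),e_\beta>+<{\bf q}^*(e_\beta,Y,e_p),e_\alpha>+<{\bf q}^*(e_\alpha\overline{e}_p,Y,e_0),e_\beta>+<{\bf q}^*(e_\beta\overline{e}_p,Y,e_0),e_\alpha>=0$, which is extractable from~\eqref{identity4} by setting $q=0$. Identities~(4) and~(6) involve the $\circ$-multiplication on the $Y$-side and require the analogue of~\eqref{identity4} obtained from the $X,Z$-homogeneous piece of~\eqref{defining'}; there the inner-product identities for $\circ$ from Lemma~\ref{QS} play the role that octonian orthogonality plays in~(3) and~(5). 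The main technical obstacle is bookkeeping: one must verify that the extension convention ${\bf q}^*(e_0,Y,Z)=0={\bf q}^*(X,e_0,Z)$ correctly absorbs the scalar $e_0$-components arising in products like $X\overline{Z}$, $ZX$, $Y\circ\overline{Z}$, and $Z\circ Y$, so that the polarized basis-level identities assemble into exactly the Corollary statements.
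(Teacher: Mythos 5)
Your strategy for identities (3) and (4) is sound: setting $q=0$ in~\eqref{identity4} (and in its $X,Z$-analogue) does produce exactly the symmetrized-in-$X$ (respectively in $Y$) versions of the third and fourth identities of Lemma~\ref{imp}, and together with linearity in $Z$ this gives those two Corollary identities. But the core claim on which your treatment of (1), (5) and (6) rests --- that the remaining polarizations are ``extractable from~\eqref{identity4}'' by simply not specializing as far --- is not substantiated, and it does not go through by the natural specializations. Every term of~\eqref{identity4} has first slot of the form $e_b\overline{e}_q$ or $e_a\overline{e}_p$, i.e.\ the first slot is tied by octonian products to the third-slot and component indices. For instance, to reach the cross term $<{\bf q}^{*}(e_c,Y,e_p),e_q>$ needed for identity (1) you are forced to take $a=q$, $e_b=e_ce_q$, and what drops out is $<{\bf q}^{*}(e_c,Y,e_p),e_q>=-<{\bf q}^{*}(e_q\overline{e}_p,Y,e_q),e_ce_q>$, not the vanishing of $<{\bf q}^{*}(e_c,Y,e_p),e_q>+<{\bf q}^{*}(e_c,Y,e_q),e_p>$; similarly, the specializations aimed at the polarized form of identity (5) produce $<{\bf q}^{*}(e_c,Y,e_d),e_p>+<{\bf q}^{*}(e_d,Y,e_c),e_p>=<{\bf q}^{*}(e_p\overline{e}_c,Y,e_d),e_0>+<{\bf q}^{*}(e_p\overline{e}_d,Y,e_c),e_0>$, which still requires exchanging the third slot with the component direction. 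That exchange property is precisely the skew-symmetry of $<{\bf q}^{*}(X,Y,Z),W>$ in $(Z,W)$, which the paper establishes only in Lemma~\ref{AI}, \emph{after} and \emph{using} the fifth identity of Corollary~\ref{class}; so filling your gap this way would be circular.

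The paper's own proof avoids polarization entirely: it combines the identities of Lemma~\ref{imp} with the transitivity of the automorphism group $G_2$ of ${\mathbb O}$ on its unit imaginary sphere, so that any unit imaginary vector can play the role of a basis element $e_a$ (the same device already used inside the proof of Lemma~\ref{imp} to pass from $e_a$ to arbitrary imaginary $X$ in~\eqref{identity2}); the derivations of~\eqref{identity4} and of the basis-level identities are unchanged under such an adapted change of basis, and then multilinearity (Proposition~\ref{3.4-5}) and homogeneity give the Corollary. To repair your argument you should either adopt this transitivity argument, or explicitly derive the missing symmetrized identities for (1), (5), (6) --- which, as indicated above, requires more than~\eqref{identity4} and its $X,Z$-analogue as they stand.
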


\begin{proof}
It follows from the identities, in order, of Lemma~\ref{imp} and the transitivity
of the automorphism group of ${\mathbb O}$ on its imaginary unit sphere.
\end{proof}

In fact, we can strengthen the first identity of Corollary~\ref{class} as follows.

%%\begin{proposition}\label{crucial} In fact, either $a\circ b=ab$ or $=ba$
%%for all $a,b\in{\mathbb O}$.
%%\end{proposition}

%%\begin{proof} We first establish a sublemma.
\begin{lemma}\label{AI}
\begin{equation}\label{import}
<{\bf q}^{*}(U{\overline V},Y,V),W>=-<{\bf q}^{*}(W{\overline V},Y,V),U>,
\end{equation}
where $U,Y,W\in{\mathbb O}$ and $V$ is either $e_0$ or purely imaginary.
In particular, $<{\bf q}^{*}(X,Y,Z),W>$ is skew-symmetric for $Z$ and $W$
in ${\mathbb O}$. Moreover,
$<{\bf q}^{*}(X,Y,e_0),Z>$ is skew-symmetric in all $X,Y,Z\in{\mathbb O}$.
\end{lemma}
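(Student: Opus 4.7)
The plan is to derive a fully polarized (multilinear) version of equation~\eqref{identity4} that underlies Lemma~\ref{imp}, and to read off all three claims of the present lemma as specializations of it. Using the inner-product identity $<e_\alpha e_q,e_b>=<e_\alpha,e_b\bar e_q>$ from~\eqref{eq-1} together with $\sum_\alpha <\phi,e_\alpha>e_\alpha=\phi$ to collapse the four $\alpha$-sums appearing in~\eqref{identity4}, and then invoking multilinearity in $e_p,e_q,e_a,e_b$, I obtain
\begin{equation*}
<{\bf q}^{*}(B\bar Q,Y,P),A>+<{\bf q}^{*}(B\bar P,Y,Q),A>+<{\bf q}^{*}(A\bar Q,Y,P),B>+<{\bf q}^{*}(A\bar P,Y,Q),B>=0
\end{equation*}
valid for all $A,B,P,Q,Y\in{\mathbb O}$ (the case $Y=e_0$ being trivial by the extension ${\bf q}^{*}(\cdot,e_0,\cdot)=0$). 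Call this identity $(\star)$.

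Specializing $(\star)$ to $P=Q=V$ collapses the four terms into two identical pairs and immediately delivers~\eqref{import}. For the total skew-symmetry of $<{\bf q}^{*}(X,Y,e_0),Z>$ in $X,Y,Z$, I would polarize the vanishing identities $<{\bf q}^{*}(X,Y,e_0),X>=0$ and $<{\bf q}^{*}(X,Y,e_0),Y>=0$ from the second line of~\eqref{identity1} separately in $X$ and $Y$. The resulting skew-symmetries in slots $(1,4)$ and $(2,4)$ compose to force skew-symmetry in slots $(1,2)$, which together yield total skew-symmetry in $X,Y,Z$.

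The remaining skew-symmetry in $Z,W\in{\mathbb O}$ is handled in two steps. First, applying~\eqref{import} with $V=W$ a purely imaginary unit and $U=XW$, so that $U\bar V=X$ by alternativity and $W\bar V=W\bar W=e_0$, makes the right-hand side vanish by the convention ${\bf q}^{*}(e_0,\cdot,\cdot)=0$; this gives $<{\bf q}^{*}(X,Y,W),W>=0$ for $W\in{\rm Im}({\mathbb O})$, and polarization supplies the slot-$(3,4)$ skew-symmetry when both $W_1,W_2$ are purely imaginary. Second, the mixed case (one of $W_1,W_2$ equal to $e_0$) is extracted from $(\star)$ with $P=e_0$, $Q\in{\rm Im}({\mathbb O})$ and $A=e_0$: the first line of~\eqref{identity1} annihilates the $<{\bf q}^{*}(\cdot,Y,e_0),e_0>$ terms and the extension annihilates $<{\bf q}^{*}(e_0,Y,\cdot),\cdot>$, reducing $(\star)$ to $<{\bf q}^{*}(B,Y,Q),e_0>=<{\bf q}^{*}(Q,Y,e_0),B>$; the total skew-symmetry of $<{\bf q}^{*}(\cdot,\cdot,e_0),\cdot>$ just established then converts this into $<{\bf q}^{*}(B,Y,Q),e_0>+<{\bf q}^{*}(B,Y,e_0),Q>=0$, which is the mixed slot-$(3,4)$ skew-symmetry. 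The main technical step is the multilinear rewrite of~\eqref{identity4} into $(\star)$; the remainder is careful specialization.
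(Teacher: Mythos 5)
Your proposal is correct and takes essentially the paper's route: both arguments rest on specializing/polarizing~\eqref{identity4} (your $(\star)$ is just its fully multilinear rewriting), with the diagonal choice $P=Q=V$ yielding~\eqref{import} and polarization of the second line of~\eqref{identity1} yielding the total skew-symmetry of ${\bf q}^{*}(\cdot,\cdot,e_0)$ paired with the last slot. The only difference is bookkeeping in the mixed case ($Z$ or $W$ equal to $e_0$) of the $(Z,W)$ skew-symmetry, which the paper extracts via the fifth identity of Corollary~\ref{class} while you extract it from the specialization $P=A=e_0$ of $(\star)$ together with the already-established total skew-symmetry; both are legitimate.
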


\begin{proof} Setting $p=q$ in~\eqref{identity4}, we obtain
$$
<{\bf q}^{*}(e_b{\overline e}_p,Y,e_p),e_a>=-<{\bf q}^{*}(e_a\overline{e}_p,Y,e_p),e_b>.
$$
The first statement follows.

Setting $U=e_0$ and $X:=W{\overline V}$ for a purely imaginary $V$, we obtain

\begin{eqnarray}\label{import'}
\aligned
&<{\bf q}^{*}(X,Y,V),e_0>=<{\bf q}^{*}(V,Y,V),XV>\\
&=-<{\bf q}^{*}(X,Y,e_0),V>,
\endaligned
\end{eqnarray}
where the last equality follows from the fifth identity of Corollary~\ref{class}.

The second statement is a consequence of~\eqref{import'} and the first
identity of Corollary~\ref{class},
which says that $<{\bf q}^{*}(X,Y,Z),W>$ is skew-symmetric in $Z$ and $W$ when
$Z$ and $W$ are purely imaginary.

The third statement follows from anti-symmetrizing the $X$ and $Y$ slots of
the two equations, respectively, of the second identity of Corollary~\ref{class}.
\end{proof}

\begin{corollary}\label{anti}
For $W\in{\mathbb O}$, we have
$$
<{\bf q}^{*}(X,Y,W),XW>=0\quad\text{and}\quad <{\bf q}^{*}(X,Y,W),Y\circ W>=0,
$$
so that anti-symmetrizing we get
\begin{eqnarray}\nonumber
\aligned
<{\bf q}^{*}(X,Y,U),XV>&=-<{\bf q}^{*}(X,Y,V),XU>\\
<{\bf q}^{*}(X,Y,U),Y\circ V>&=-<{\bf q}^{*}(X,Y,V),Y\circ U>
\endaligned
\end{eqnarray}
for $U,V\in{\mathbb O}$.
\end{corollary}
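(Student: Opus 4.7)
The plan is to derive each of the two vanishing identities from a polarized form of the fifth Ozeki-Takeuchi equation \eqref{identity4} (together with its $\circ$-analog), combined with a Moufang-type identity in the appropriate normed algebra, and then to obtain the anti-symmetric versions by polarization in $W$.

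Since ${\bf q}^{*}$ is trilinear by Proposition \ref{3.4-5} and \eqref{identity4} is bilinear in $(e_{p},e_{q})$ as well as in $(e_{a},e_{b})$, the identity extends by multilinearity. Converting each factor $<e_{\alpha}Z,e_{a}>$ into $<e_{\alpha},A\overline{Z}>$ via $<xy,z>=<x,z\overline{y}>$ and collapsing the $\alpha$-sums by bilinearity of ${\bf q}^{*}$ in its first slot, this extension reads
\[
<{\bf q}^{*}(B\overline{Z'},Y,Z),A>+<{\bf q}^{*}(B\overline{Z},Y,Z'),A>+<{\bf q}^{*}(A\overline{Z'},Y,Z),B>+<{\bf q}^{*}(A\overline{Z},Y,Z'),B>=0
\]
for all $A,B,Z,Z'\in\mathbb{O}$. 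Setting $A=B$ and $Z=Z'=W$ collapses the four terms to a single one, yielding
\[
<{\bf q}^{*}(A\overline{W},Y,W),A>=0\qquad\text{for all }A,W\in\mathbb{O}.
\]
Substituting $A=XW$ and using the octonion alternative identity $(XW)\overline{W}=X|W|^{2}$ delivers $|W|^{2}<{\bf q}^{*}(X,Y,W),XW>=0$, hence the first identity after dividing by $|W|^{2}$ (the case $W=0$ being trivial).

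For the second identity, I re-run the derivation of \eqref{identity4} starting from \eqref{defining'}, but now isolating the part homogeneous in $X$ and $Z$ rather than in $Y$ and $Z$. Since the $Y$-contribution to ${\bf p}^{*}$ is $-\sqrt{2}<Y\circ Z,e_{a}>$ by Proposition \ref{ppp2}, the product $\circ$ takes the place of the octonion product in the $Y$-slot throughout, producing the analog
\[
<{\bf q}^{*}(X,A\circ\overline{Z},Z),A>=0
\]
for all $X,A,Z\in\mathbb{O}$. Choosing $A=Y\circ W$ and applying the Moufang identity $(Y\circ W)\circ\overline{W}=Y|W|^{2}$ for the normed algebra $(\mathbb{O},\circ)$ yields the second identity. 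Finally, both vanishing statements are quadratic in $W$, so the substitution $W\mapsto U+V$ cancels the diagonal terms and extracts precisely the stated anti-symmetric relations.

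The main obstacle is ensuring the Moufang identity $(Y\circ W)\circ\overline{W}=Y|W|^{2}$ holds with the standard octonion conjugate on the right. This reduces to two observations: $(\mathbb{O},\circ)$ is a normed algebra with identity $e_{0}$, hence alternative by Hurwitz's theorem (already visible through the explicit realization $x\circ y=(x(y\overline{\alpha}))\alpha$ or $x\circ y=\alpha((\overline{\alpha}y)x)$ in Lemma \ref{QS}); and the involution characterized by $w+\overline{w}=2<w,e_{0}>e_{0}$ is common to both products, so the $\circ$-conjugate coincides with the octonion conjugate.
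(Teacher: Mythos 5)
Your proposal is correct, and it reaches the two vanishing statements by a somewhat different organization than the paper. The paper's own proof routes through Lemma~\ref{AI}: it sets $p=q$ in~\eqref{identity4} to get the skew-symmetry~\eqref{import}, substitutes $U=XW$ to kill the purely imaginary part of $W$, and then needs a separate computation (using the skew-symmetry of $<{\bf q}^{*}(X,Y,e_0),Z>$ and the fifth identity of Corollary~\ref{class}) to handle the $e_0$-component of $W$ before assembling the general case; the second identity is dispatched with a ``likewise.'' You instead return to~\eqref{identity4} itself, extend it multilinearly in all four slots, and set $A=B$, $Z=Z'=W$, which gives $<{\bf q}^{*}(A\overline{W},Y,W),A>=0$ for arbitrary $W\in{\mathbb O}$ in one stroke, so no real/imaginary case division is needed; the substitution $A=XW$ and $(XW)\overline{W}=|W|^2X$ then finish it, and the explicit $\circ$-analog of~\eqref{identity4} (which the paper also invokes, implicitly, for the fourth and sixth identities of Lemma~\ref{imp}) handles the second identity symmetrically. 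Two small remarks. First, your full polarization uses the diagonal case $a=b$ of~\eqref{identity4}, whereas~\eqref{defining'} is stated for $i\neq j$; this is harmless and consistent with the paper, which itself repeatedly sets $a=b$ in~\eqref{identity4} (the corresponding Ozeki--Takeuchi equation does hold on the diagonal), but it is worth saying explicitly. Second, you do not need to invoke Hurwitz's theorem or alternativity as outside input: $(XW)\overline{W}=|W|^2X$ and $(Y\circ W)\circ\overline{W}=|W|^2Y$ are exactly the third line of~\eqref{eq-1} with $x=y=W$, and Lemma~\ref{QS} already asserts that~\eqref{eq-2} and~\eqref{eq-1} remain valid for $\circ$ (your observation that the $\circ$-conjugate coincides with the octonian conjugate, both products having identity $e_0$ and the same inner product, is correct and is what makes this transfer legitimate). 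The final polarization $W\mapsto U+V$ to obtain the anti-symmetric relations is the same as the paper's.
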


\begin{proof}
Setting $U=XW$ for $W\in\text{Im}({\mathbb O})$, we derive from~\eqref{import}
\begin{eqnarray}\nonumber
\aligned
&<{\bf q}^{*}(X,Y,W),XW>=<{\bf q}^{*}(U{\overline W},Y,W),U>\\
&=-<{\bf q}^{*}(U{\overline W},Y,W),U>=0.
\endaligned
\end{eqnarray}

We next calculate $<{\bf q}^{*}(X,Y,e_0),XW>$ for a purely imaginary $W$. By
the skew symmetry of $<{\bf q}^{*}(X,Y,e_0),Z>$ for all $X,Y,Z\in{\mathbb O}$,
\begin{eqnarray}\nonumber
\aligned
&<{\bf q}^{*}(X,Y,e_0),XW>=<{\bf q}^{*}(XW,Y,X),e_0>\\
&=-<{\bf q}^{*}(\overline{W}\,\overline{X},Y,X),e_0>
=<{\bf q}^{*}(e_0{\overline X},Y,X),{\overline W}>\\
&=-<{\bf q}^{*}(X,Y,X),\overline W>=<{\bf q}^{*}(X,Y,X),W>,
\endaligned
\end{eqnarray}
which cancels $<{\bf q}^{*}(X,Y,W),X>$ for an imaginary $W$. Putting all these
together, it follows that
\begin{equation}\label{goot}
<{\bf q}^{*}(X,Y,W),XW>=0
\end{equation}
for all $W\in{\mathbb O}$.

Likewise, $<{\bf q}^{*}(X,Y,W),Y\circ W>=0$ for all $W\in{\mathbb O}$
by a similar argument.
\end{proof}

\begin{remark} In fact, the first two identities of Corollary~{\rm \ref{anti}}
establish that
$<{\bf p}^{*},{\bf q}^{*}>=0$
by~\eqref{2fndtl}. This is the seventh of the ten equations of Ozeki and
Takeuchi~\cite[I, p 530]{OT}
defining an isoparametric hypersurface.
\end{remark}

%%<{\bf q}^{*}(U{\overline W},Y,W),U>\\
%%&=-{\bf q}^{*}(U{\overline W},Y,W),U>.
%%\endaligned
%%\end{eqnarray}
%%Hence, $<{\bf q}^{*}(X,Y,W),XW> =0.$ Likewise, $<{\bf q}^{*}(X,Y,W),YW>=0$
%%by a similar argument in view of Remark~\ref{RE}.

We now come to a crucial observation. Recall the angle $\theta$ given before
Lemma~\ref{comparison}.

\begin{proposition}\label{CRUCIAL} Assume $\theta\neq 0$ and $\pi$. Let
$R(X,Y):={\bf q}^{*}(X,Y,e_0)$. Then
$$
R(X,Y)=XY-Y\circ X,
$$
if $e$ is perpendicular to $X,Y$ and $XY$, while
$$
R(X,Y)=\pm(XY-Y\circ X)
$$
if $XY$ is parallel to $e$.
%%\begin{eqnarray}\nonumber
%%\aligned
%%g(X,Y)/2&=(x_5y_4-x_4y_5)e_1+(x_6y_4-x_4y_6)e_2+(x_7y_4-x_4y_7)e_3\\
%%&-(x_1y_4-x_4y_1)e_5-(x_2y_4-x_4y_2)e_6-(x_3y_4-x_4y_3)e_7\\
%%&-(x_5y_1-x_1y_5+x_6y_2-x_2y_6+x_7y_3-x_3y_7)e_4,
%%\endaligned
%%\end{eqnarray}
%%when we identify $e$ in Lemma~\ref{comparison} with $e_4$.
\end{proposition}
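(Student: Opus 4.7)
The starting point is the polynomial identity
\[
|R(X,Y)|^2=|XY-Y\circ X|^2,
\]
which follows from Lemma~\ref{sub} by setting $Z=e_0$ and using that $e_0$ is a two-sided identity for both octonion multiplication and $\circ$. By Lemma~\ref{AI} the pairing $\omega(X,Y,Z):=\langle R(X,Y),Z\rangle$ is an alternating trilinear form on $\mathbb{O}^3$; the candidate form $\omega'(X,Y,Z):=\langle XY-Y\circ X,Z\rangle$ is likewise alternating trilinear, since it is precisely the analogue of $R$ for the FKM model computed in~\eqref{3rdf} of Section~\ref{sec5}. The task is thus to identify $\omega$ with $\omega'$ on the inputs specified.

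Using Lemma~\ref{comparison} I would first unfold $\tilde R(X,Y):=XY-Y\circ X$ explicitly. Case~1 ($e\perp X,Y,XY$) gives $\tilde R(X,Y)=(1+\cos 2\theta)XY+\sin 2\theta\,(XY)e$, which lies in the $2$-plane $\Pi:=\mathrm{span}(XY,(XY)e)$ and has $|\tilde R|^2=4\cos^2\theta|X|^2|Y|^2$; case~2 ($XY\parallel e$) gives $\tilde R(X,Y)=2XY\in\mathbb{R}e$.

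The core step is to show that $R(X,Y)$ lies in the same subspace as $\tilde R(X,Y)$. One already has $R(X,Y)\perp X,Y$ by Corollary~\ref{class}. For the remaining orthogonalities I would combine the Corollary~\ref{anti} identities
\[
\langle R(X,Y),XU\rangle=-\langle{\bf q}^{*}(X,Y,U),X\rangle,\quad \langle R(X,Y),Y\circ U\rangle=-\langle{\bf q}^{*}(X,Y,U),Y\rangle,
\]
with the magnitude bound $|{\bf q}^{*}(X,Y,U)|=|X(Y\circ U)-Y\circ(XU)|$ from Lemma~\ref{sub}, and choose $U$ so that the right-hand side has vanishing component along $X$ or $Y$; this forces $\omega(X,Y,Z)=0$ for every $Z\perp\Pi$ in case~1, and for every $Z\perp\mathbb{R}e$ in case~2. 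The hypothesis $\theta\neq 0,\pi$ enters precisely here: it ensures that $X(Y\circ U)-Y\circ(XU)$ picks up a nonzero $e$-twist through Lemma~\ref{comparison}, producing the cancellations that trap $R(X,Y)$ in the desired subspace.

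Once $R(X,Y)$ is confined to the correct subspace with the correct length, case~1 is finished by a direction-matching argument: polarize the identity $|R(X,Y)|^2=|\tilde R(X,Y)|^2$ in $Y$ against an auxiliary $Y'$ chosen so that $\tilde R(X,Y')$ realizes each of the two basis directions $XY$ and $(XY)e$ of $\Pi$ separately, thereby recovering $\langle R(X,Y),XY\rangle=1+\cos 2\theta$ and $\langle R(X,Y),(XY)e\rangle=\sin 2\theta$ and concluding $R(X,Y)=\tilde R(X,Y)$ without sign ambiguity. In case~2, the ambient subspace is already one-dimensional, so only the sign of $R(X,Y)/\tilde R(X,Y)$ is pinned down by the magnitude identity, yielding $R(X,Y)=\pm\tilde R(X,Y)$ as asserted. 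The main obstacle throughout is the middle orthogonality step, which requires careful bookkeeping of the interplay between the octonion product and $\circ$, made manageable exactly by the assumption $\theta\neq 0,\pi$.
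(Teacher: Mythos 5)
Your skeleton (reduce to the norm identity $|R(X,Y)|=|XY-Y\circ X|$, confine $R(X,Y)$ to a distinguished subspace, then pin down the coefficients) matches the paper's, but the two steps that actually make the argument work are missing or misjustified. First, the ``core step'' of showing $R(X,Y)\in\mathrm{span}(XY,(XY)e)$ is only gestured at, and the gesture fails: the identity $|{\bf q}^{*}(X,Y,U)|=|X(Y\circ U)-Y\circ (XU)|$ from Lemma~\ref{sub} constrains only the \emph{length} of ${\bf q}^{*}(X,Y,U)$, so it cannot force a single component such as $\langle {\bf q}^{*}(X,Y,e),X\rangle$ to vanish unless the entire right-hand side is zero (which it is not for generic $\theta$). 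Likewise, the identities of Corollaries~\ref{class} and~\ref{anti} only convert $\langle R(X,Y),Xe\rangle$ into values of ${\bf q}^{*}$, hence of $R$, at \emph{other} arguments, so chasing them is circular without additional input. The paper's actual mechanism is different and is the heart of the proof: expand $R(X,Y)=a(XY)+fe+c(Xe)+d(Ye)+b((XY)e)$ in the octonion basis built from $X,Y,e$, observe that $a,b,c,d,f$ are invariant under the automorphism group $G_2$, which acts transitively on such configurations $(X,Y,e)$, hence they are \emph{constants}; then $c=d=f=0$ because the corresponding terms have the wrong bidegree in $(X,Y)$ for the bilinear map $R$. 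Nothing in your proposal replaces this.

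Second, the ``direction-matching by polarization'' does not deliver what you claim. Polarizing $|R(X,Y)|^2=|\widetilde R(X,Y)|^2$ in $Y$ yields $\langle R(X,Y),R(X,Y')\rangle=\langle\widetilde R(X,Y),\widetilde R(X,Y')\rangle$, with the unknown $R(X,Y')$ still on the left; it does not produce $\langle R(X,Y),\widetilde R(X,Y')\rangle$, so it identifies neither $\langle R(X,Y),XY\rangle$ nor $\langle R(X,Y),(XY)e\rangle$, and in particular cannot remove the sign ambiguity. The paper instead derives the linear relation $a(1-\cos 2\theta)-b\sin 2\theta=0$ from the third and fourth identities of Corollary~\ref{class} together with the total skew-symmetry of $\langle R(U,V),W\rangle$ (Lemma~\ref{AI}), and combines it with $a^2+b^2=2+2\cos 2\theta$ to get $(a,b)$ up to one common sign, which is then normalized. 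Your treatment of the case $XY\parallel e$ (one-dimensional target, sign left ambiguous) is consistent with the paper, but as written the proof of the first, sign-free case is incomplete.
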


\begin{proof} By Lemma~\ref{sub} we see $|R(Z,Z)|=|ZZ-Z\circ Z|=0$, so that $R(Z,W)$
is skew-symmetric in $Z$ and $W$. 

We may assume $X,Y\in{\rm Im}({\mathbb O})$ are orthonormal vectors
such that $X,Y$ and $XY$ are all perpendicular to $e$, where $e$ is given before
Lemma~\ref{comparison}.
%%Such pairs $(X,Y)$ form a submanifold ${\mathcal M}$ of the Stiefel manifold
%%${\mathcal S}$ of orthonormal pairs
%%of $\text{Im}({\mathbb O})$. In fact, ${\mathcal S}$ is a $S^5$-bundle over $S^6$
%%and for exch $X$ in $S^6$, those $Y$ in the fiber over $X$ for which $XY$ is also
%%perpendicular to $e$, or equivalently, for which $<Y,Xe>=0$, is an $S^4$, so that
%5${\mathcal M}$ is a $S^4$-bundle over $S^6$.
%%From a different view, homogenizing, we may think of such pairs
%%as constituting a subvariety of $\text{Im}({\mathbb O})\times\text{Im}({\mathbb O})$
%%cut out by homogeneous polynomials of bidegree $(1,1)$ in the variables
%%$X$ and $Y$. We will think of them either way, whichever is convenient.
Then $e_0,X,Y,XY,e,Xe,Ye,(XY)e$ form an octonian basis of
${\mathbb O}$.
It follows that
$R(X,Y)$ is a linear combination of the above basis elements. We know 
$$
<R(X,Y),e_0>=<R(X,Y),X>=<R(X,Y),Y>=0
$$
by the first two identities of Corollary~\ref{class}. Therefore,
we conclude          
\begin{equation}\label{inv}
R(X,Y)=a(XY)+fe+c(Xe)+d(Ye)+b((XY)e)
\end{equation}
for some functions $a,b,c,d,f$ on ${\mathcal M}$.

Let $X=g^{-1}(X'),Y=g^{-1}(Y')$ and $e=g^{-1}(e')$ for any automorphism $g$ of
${\mathbb O}$. Then
\begin{equation}\nonumber
\aligned
(g\cdot R)(X',Y')&:=g(R(g^{-1}(X'),g^{-1}(Y')))=g(R(X,Y))\\
&=a(X'Y')+fe'+c(X'e')+d(Y'e')+b((X'Y')e').
\endaligned
\end{equation}
The interpretation is that $(g\cdot R)(X',Y')$ is $R(X,Y)$ relative to
the new octonian basis $e_0,g^{-1}(e_1),\cdots,g^{-1}(e_7)$ with coordinates
$X',Y'$ and $e'$. Since any such $(X,Y,e)$ can be
$(g^{-1}(X'),g^{-1}(Y'),g^{-1}(e'))$ for a fixed 
$(X',Y',e')$ (think of it as $(e_1,e_2,e_4)$) as we vary $g$, we see that $a,b,c,d,f$ are all constant. But then
homogenizing $X$ and $Y$ in~\eqref{inv} shows that $c=d=0$ for (polynomial) degree reason,
and, moreover, that $f=0$ since R(X,Y) is skew-symmetric. 
%%Skew symmetry of $R(X,Y)$ implies $c(Y)=-d(Y)$. Now on the one hand,
%%$$
%%|R(X,Y)|^2=a^2+f(X,Y)^2+c(Y)^2+c(X)^2+b^2.
%%$$
%%On the other hand,~\eqref{sub'} asserts, by Lemma~\ref{comparison},
%%$$
%%|R(X,Y)|^2=|XY-Y\circ X|^2=2+2\cos(2\theta).
%%$$
%%Hence, $f(X,Y)^2+c(Y)^2+c(X)^2$ is a constant. Setting $X=Y$ results in
%%$C(X)$, and hence $f(X,Y)$, being  constant, and thus $c(X)=f(X,Y)\equiv 0$,
So now
\begin{equation}\label{great}
R(X,Y)=a(XY)+b((XY)e).
\end{equation}
To determine $a$ and $b$, we note that by Lemma~\ref{AI} 
$$
<R(U,V),W>=<{\bf q}^{*}(U,V,e_0),W>
$$
is skew-symmetric in all variables. Hence the 3rd identity of Corollary~\ref{class}
gives
$$
<R(X,Y),XY>=<{\bf q}^{*}(X,Y,X),Y>,
$$
while the 4th identity of Corollary~\ref{class} gives
$$
<R(X,Y),Y\circ X>=-<{\bf q}^{*}(X,Y,X),Y>.
$$
Adding these two equations, incorporating Lemma~\ref{comparison} and bearing in mind
that $a=<R(X,Y),XY>$ and $b=<R(X,Y),(XY)e>$, we obtain
$$
a(1-\cos(2\theta))-b\sin(2\theta)=0.
$$
But then
$$
a^2+b^2=|R(X,Y)|^2=|XY-Y\circ X|^2=2+2\cos(2\theta)
$$
results in
$$
a=\pm(1+\cos(2\theta)),\quad b=\pm\sin(2\theta).
$$
(The signs for $a$ and $b$ agree.) By changing $e$ to $-e$, we may assume the
sign is positive. It follows that
$$
R(X,Y)=(1+\cos(2\theta))XY+\sin(2\theta)(XY)e=XY-Y\circ X.
$$

In the case when the orthonormal imaginary $X$ and $Y$ are such that
$XY=e$, we form an octonian basis $e_0,X,Y,e,W,WX,WY,We$. We have,
since $X\circ Y=XY=e$ by Lemma~\ref{comparison} and since
$R(X,Y)$ is skew-symmetric, that
$$
<R(X,Y),W>=<R(W,X),Y>=<WX-X\circ W,Y>=0
$$
by the previous case. In other words, $R(X,Y)$ is in the span of $e_0$ and $e$
since $<R(X,Y),X>=<R(X,Y),Y>=0$. Write
$$
<R(X,Y)=ae+be_0.
$$
Now, $b=<R(X,Y),e_0>=0$ by skew symmetry. Moreover,
since $|R(X,Y)|=|XY-Y\circ X|=2$, we see $a=\pm 2$ and
$$
R(X,Y)=\pm 2e=\pm 2XY=\pm(XY-Y\circ X).
$$
%%Now, set $e=e_4$. Then since
%%any two different imaginary basis elements $e_a,e_b\neq e_4$ satisfy either
%%$e_ae_b=e_4$, or $e_a,e_b$ and $e_ae_b$ are all perpendicular to $e_4$, the
%%above discussion provides a recipe for writing down $R(X,Y)$ explicitly. In summary,
%%$$
%%R(X,Y)=XY-Y\circ X+g(X,Y), 
%%$$
%%where either $g\equiv 0$ or
%%\begin{eqnarray}\label{long}
%%\aligned
%%g(X,Y)/2&=(x_5y_4-x_4y_5)e_1+(x_6y_4-x_4y_6)e_2+(x_7y_4-x_4y_7)e_3\\
%%&-(x_1y_4-x_4y_1)e_5-(x_2y_4-x_4y_2)e_6-(x_3y_4-x_4y_3)e_7\\
%%&-(x_5y_1-x_1y_5+x_6y_2-x_2y_6+x_7y_3-x_3y_7)e_4
%%\endaligned
%%\end{eqnarray}
\end{proof}

\begin{corollary}\label{cute} $R(X,Y)=XY-YX$ if $\theta=0$ and $R(X,Y)=0$ if $\theta=\pi$.
\end{corollary}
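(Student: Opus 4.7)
The plan is a short case analysis, using Lemma~\ref{sub} and Proposition~\ref{CRUCIAL} as the main tools.

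For $\theta = \pi$, this value of $\theta$ corresponds to the alternative representative in Lemma~\ref{QS}, so $x \circ y = yx$ and hence $Y \circ X = XY$ for imaginary $X, Y$. Lemma~\ref{sub} then gives
\[
|R(X,Y)|^{2} = |XY - Y \circ X|^{2} = 0,
\]
so $R(X,Y) \equiv 0$ on imaginary $X, Y$, which forces $R \equiv 0$ on all of ${\mathbb O} \times {\mathbb O}$ by multilinearity and the boundary convention ${\bf q}^{*}(\,\cdot\,,\,\cdot\,,e_{0}) = R$.

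For $\theta = 0$, we have $\alpha = e_{0}$ and $x \circ y = xy$, hence $Y \circ X = YX$. The formula
\[
R(X,Y) = (1 + \cos 2\theta)\,XY + \sin(2\theta)\,(XY)e,
\]
established in Proposition~\ref{CRUCIAL} for $\theta \in (0,\pi)$ and any $e$ perpendicular to $X$, $Y$, $XY$, extends continuously to $\theta = 0$: the Cartan--M\"{u}nzner polynomial, and hence the $3$rd fundamental form and $R$, depend continuously on the unit vector $\alpha$. Letting $\theta \to 0^{+}$ yields $R(X,Y) = 2XY = XY - YX$ for orthonormal imaginary $X, Y$, and this extends by bilinearity to all of ${\mathbb O} \times {\mathbb O}$.

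The main obstacle is rigorously justifying the limit at $\theta = 0$. One can sidestep the continuity argument by redoing the expansion of Proposition~\ref{CRUCIAL} without a distinguished $e$: the identities in Corollary~\ref{class} and the invariance of the whole setup under the stabilizer of $(X,Y)$ in $G_{2}$ force $R(X,Y) = c\,XY$ for a scalar $c$, and Lemma~\ref{sub} gives $c = \pm 2$. Pinning down the sign, which is the subtle point, can be done either by comparison with the Ozeki--Takeuchi example (where ${\bf q}^{*}(X,Y,e_{0}) = XY - YX$ by the calculation at the end of Section~\ref{sec5}) or directly by the $\theta \to 0^{+}$ limit of the formula just cited.
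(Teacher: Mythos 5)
Your $\theta=\pi$ case is correct and is exactly the paper's argument: with $Y\circ X=XY$, Lemma~\ref{sub} gives $|R(X,Y)|=|XY-Y\circ X|=0$, and the extension conventions kill the remaining slots. The $\theta=0$ case, however, has two genuine gaps. First, the limit argument you lean on in both your main route and your backup is not available: $\theta$ is an invariant of the \emph{given} hypersurface and the chosen mirror point $x^{*}$, not a parameter you can vary. There is no one-parameter family of Cartan--M\"unzner polynomials interpolating between the unknown hypersurface with $\theta=0$ and hypersurfaces with small positive $\theta$, so ``letting $\theta\to 0^{+}$'' in the formula of Proposition~\ref{CRUCIAL} has no meaning here. (Proposition~\ref{deform} perturbs the mirror point only for the already-known FKM examples; it cannot be invoked for the hypersurface being classified.) The paper instead observes that the derivation of \eqref{great}, $R(X,Y)=a(XY)+b((XY)e)$ with constant $a,b$, never used $\theta\neq 0,\pi$; when $\theta=0$ or $\pi$ the unit $e$ is arbitrary, so $b=0$ and $R(X,Y)=aXY$ --- which is essentially your backup ``no distinguished $e$'' argument, and that part is sound.

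Second, your sign determination does not work. Comparing with the Ozeki--Takeuchi example only tells you the sign \emph{in that example}; it places no constraint on an arbitrary Condition~A hypersurface, which a priori could realize $a=-2$. The correct resolution, used by the paper, is that the sign is a removable gauge ambiguity: the substitution $(X,Y,Z)\mapsto(-X,-Y,-Z)$ leaves the second fundamental form ${\bf p}^{*}$ (which is quadratic) unchanged while negating the cubic form ${\bf q}^{*}$, so one may normalize $a=+2$ without loss of generality. You need this normalization argument (or something equivalent) to close the proof; neither of the two sign arguments you propose supplies it.
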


\begin{proof} $e$ is arbitrary in~\eqref{great} when $\theta=0$ or $\pi$. Hence the real number $b=0$,
so that $R(X,Y)=aXY$. In the case when $\theta=\pi$ we have $a\circ b=ba$ for all
$a,b$ and $|R(X,Y)|=|XY-Y\circ X|=0$. So $a=0$. For $\theta=0$, i.e., when $a\circ b=ab$ for all $a,b$,
$|R(X,Y)|=2|X||Y|$.
So, $a=\pm 2$. Since changing $X,Y,Z$ to $-X,-Y,-Z$ leaves the 2nd fundamental form
fixed and changes the 3rd fundamental form by a sign, we may choose the positive sign.
\end{proof}

\section{Classification of ${\bf q}^{*}$}
We have seen in Lemma~\ref{sub} that the $3$rd fundamental form
${\bf q}^{*}$ satisfies
\begin{equation}\label{identity}
|{\bf q}^{*}(X,Y,Z)|=|X(Y\circ Z)-Y\circ(XZ)|.
\end{equation}
We now prove that
there are only three possibilities for ${\bf q}^{*}$.
%%%%%%%%%%%%%%%%
\begin{theorem}\label{Th} Up to isometry, the possible ${\bf q}^{*}$ are either 
$$
{\bf q}^{*}(X,Y,Z)=(XY-YX)Z
$$
constructed by Ozeki and Takeuchi, where $\circ$ coincides with the octonian multiplication, or
$$
{\bf q}^{*}(X,Y,Z)=X(Y\circ Z)-Y\circ (XZ)
$$
constructed by Ferus, Karcher and M\"{u}nzner, where either $a\circ b=ab$ or
$a\circ b=ba$ for all $a,b\in{\mathbb O}$.
\end{theorem}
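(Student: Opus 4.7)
The plan is to classify ${\bf q}^{*}(X,Y,Z)$ by first fully resolving $R(X,Y) := {\bf q}^{*}(X,Y,e_0)$ (removing the sign ambiguity from Proposition~\ref{CRUCIAL}), then promoting $R$ to the full trilinear form via the polarization identities of Corollary~\ref{class}, the skew-symmetry of Lemma~\ref{AI}, and the magnitude constraint $|{\bf q}^{*}(X,Y,Z)|=|X(Y\circ Z)-Y\circ(XZ)|$ of Lemma~\ref{sub}. Following the roadmap in the introduction, I first carry this out for $\mathbb{H}$ (i.e.\ $m_1=3$), then use the Cayley--Dickson decomposition $\mathbb{O}=\mathbb{H}\oplus\mathbb{H}\epsilon$ to lift the classification to the octonion case ($m_1=7$). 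The sign choices that remain at the end will distinguish the OT example from the two FKM examples.

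For $\theta\neq 0,\pi$ the sign ambiguity in Proposition~\ref{CRUCIAL} occurs only on the codimension-one locus $\{XY\parallel e\}$ in the $(X,Y)$-space; off this locus $R(X,Y)=XY-Y\circ X$ is uniquely determined, and a continuity argument across the locus forces the same sign, pinning $R$ down globally. To extend from $R$ to ${\bf q}^{*}$, observe that for fixed $(X,Y)$ the map $Z\mapsto{\bf q}^{*}(X,Y,Z)$ is linear into $\mathbb{O}$, and the third through sixth identities of Corollary~\ref{class} determine its components along $X$ and $Y$ explicitly in terms of $R$. Polarizing these identities in $X$ (replacing $X$ by $X+X'$) and in $Y$, combining with the skew-symmetry in the last two slots (Lemma~\ref{AI}) and the orthogonality relations of Corollary~\ref{anti}, recovers the remaining six components of ${\bf q}^{*}(X,Y,\cdot)$. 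Substituting the formula for $R$ and using the octonion identities~\eqref{eq-1} and~\eqref{eq0} to simplify yields precisely the FKM expression ${\bf q}^{*}(X,Y,Z)=X(Y\circ Z)-Y\circ(XZ)$.

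For $\theta\in\{0,\pi\}$, Corollary~\ref{cute} gives $R(X,Y)=\pm(XY-YX)$ with a genuinely free sign, and the same reconstruction procedure now produces two distinct consistent trilinear forms: either $(XY-YX)Z$, which is the OT formula, or $X(Y\circ Z)-Y\circ(XZ)$ with $\circ$ equal to $\cdot$ or its reversal, which are the two FKM forms. A direct check against Lemma~\ref{sub} and against Corollary~\ref{anti} rules out any other linear combination. To lift from $\mathbb{H}$ to $\mathbb{O}$, I decompose $Z=z_1+z_2\epsilon$ and use the Cayley--Dickson rule to convert each octonion product into quaternionic operations; applying the identities of Corollary~\ref{class} with $X,Y,Z$ alternately in $\mathbb{H}$ and in $\mathbb{H}\epsilon$ propagates the quaternionic classification to all of $\mathbb{O}$, while preserving the OT/FKM dichotomy together with the sign choice carried over from the $\mathbb{H}$-stage.

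The main obstacle is the reconstruction step itself: Corollary~\ref{class} directly gives only the components of ${\bf q}^{*}(X,Y,Z)$ along $X$ and $Y$, so the other six components (out of eight) must be recovered by polarizing in $X$ and $Y$ and then invoking the specific identities of the octonion algebra. Checking that the polarized relations close up to a single consistent formula, rather than an over-determined system with no solution, is the technical crux; it relies crucially on the alternative law and on the $C_6$-equivariance inherent in the Clifford system, and it is precisely at this step that one must be careful to track how the sign ambiguity in $R$ propagates, since that ambiguity is what ultimately separates the OT example from the FKM examples.
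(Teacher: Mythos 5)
Your overall roadmap (classify $R(X,Y)={\bf q}^{*}(X,Y,e_0)$, extend to the trilinear form using Corollary~\ref{class}, Lemma~\ref{AI}, Corollary~\ref{anti} and the norm identity of Lemma~\ref{sub}, do ${\mathbb H}$ first and then pass to ${\mathbb O}$) matches the paper's, but the first step has a genuine gap. You claim the sign ambiguity of Proposition~\ref{CRUCIAL} is confined to a codimension-one locus $\{XY\parallel e\}$ and can be removed by continuity from the open complement, where you assert $R(X,Y)=XY-Y\circ X$ holds. That is not what Proposition~\ref{CRUCIAL} gives: the unambiguous formula is established only when $X$, $Y$ \emph{and} $XY$ are all perpendicular to $e$, which is a small closed set, not the complement of $\{XY\parallel e\}$; in particular no sequence of pairs satisfying that hypothesis converges to a pair with $XY\parallel e$, so bilinearity plus continuity transfers nothing across the two configurations. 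The case $\theta=\pi/2$ makes the failure explicit: there $XY-Y\circ X=0$ whenever $X,Y,XY\perp e$, while $R(X,Y)=\pm 2e$ when $XY=e$, so the sign on the locus cannot be seen from off it; indeed in the paper the ambiguity at $\theta=\pi/2$ is never resolved pointwise but only up to the global symmetry $(X,Y,Z)\mapsto(-X,-Y,-Z)$. The paper removes the sign for $\theta\neq 0,\pi$ not at the level of $R$ but at the level of ${\bf q}^{*}$ itself, by differentiating ${\bf q}^{*}(X(t),Y(t),W)$ along a rotation of the pair to produce the diagonal values ${\bf q}^{*}(X,X,W)$, which are nonzero for $\theta\neq\pi/2$ and pin the sign down (Theorem~\ref{TM}); your proposal contains no substitute for this mechanism, and without it the reconstruction you describe only yields $\pm\bigl(X(Y\circ Z)-Y\circ(XZ)\bigr)$ on part of the domain, which is not enough to conclude FKM type.

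A second, related inaccuracy is in your $\theta\in\{0,\pi\}$ step: the freedom that separates Ozeki--Takeuchi from Ferus--Karcher--M\"unzner does not sit in $R$ (Corollary~\ref{cute} fixes $R(X,Y)=XY-YX$ at $\theta=0$ and $R\equiv 0$ at $\theta=\pi$, the same for both candidates), but in the extension to $Z$ orthogonal to the quaternion algebra ${\mathcal A}$ generated by $X,Y$, where both $2(XY)Z$ and $-2(XY)Z$ are compatible with all the linear identities and with Lemma~\ref{sub}. The paper must then (i) rule out a mixed choice of signs for different pairs $(X,Y)$, which it does with the seventh Ozeki--Takeuchi equation in the form \eqref{goot} evaluated at $W=e_0+Xe$, and (ii) rule out $c=-1$ in the quaternionic case by the same equation. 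Your ``direct check against Lemma~\ref{sub} and Corollary~\ref{anti}'' gestures at this, but the sign-consistency contradiction is exactly the step that needs the explicit computation, and as written your argument would also admit the spurious mixed-sign form. So the proposal needs both the diagonal/differentiation argument for $\theta\neq 0,\pi$ and the \eqref{goot}-based consistency argument for $\theta=0$ before it constitutes a proof.
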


The proof of Theorem~\ref{Th} consists of a series of lemmas and corollaries
in the following subsections.

%%Set
%%$$
%%R(X,Y):={\bf q}^{*}(X,Y,e_0).
%%$$
%%Note that by~\eqref{identity}
%%\begin{equation}\label{mul}
%%|R(X,Y)=|XY-Y\circ X|,
%%\end{equation}
%%so that $R(X,Y)$ is skew-symmetric in $X$ and $Y$ since
%%$|R(X,X)|=0$.

\subsection{The case when $\theta\neq 0$ and $\pi$}

\begin{lemma}\label{sub5} Suppose $\theta\neq 0$ and $\pi$. Let $X$ and $Y$ be purely imaginary and
perpendicular vectors in ${\mathbb O}$ and let $W$ be in the orthogonal
complement of the quaternion algebra
${\mathcal A}$ generated by $X$ and $Y$. Then
$$
{\bf q}^{*}(X,Y,W)=X(Y\circ W)-Y\circ (XW)
$$
if $e$ is perpendicular to ${\mathcal A}$, while
$$
{\bf q}^{*}(X,Y,W)=\pm(X(Y\circ W)-Y\circ (XW))
$$
if $XY$ is parallel to $e$; here, the sign agrees with that of $R(X,Y)$.
\end{lemma}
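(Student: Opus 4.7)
\textbf{Plan for Lemma~\ref{sub5}.}
My strategy is to propagate the value $R(X,Y)={\bf q}^{*}(X,Y,e_{0})$, computed in Proposition~\ref{CRUCIAL}, out to all of $\mathcal{A}^{\perp}$ by exploiting operator-level anticommutation hidden in Corollary~\ref{anti}. Introduce the error
$$
T(W):={\bf q}^{*}(X,Y,W)-\bigl[X(Y\circ W)-Y\circ(XW)\bigr]
$$
as a linear map $\mathbb{O}\to\mathbb{O}$. In Case~1 ($e\perp\mathcal{A}$), Proposition~\ref{CRUCIAL} gives $T(e_{0})=0$ outright; in Case~2 ($XY\parallel e$) one first absorbs the sign from $R(X,Y)=\pm(XY-Y\circ X)$ into $T$. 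The goal is to show $T\equiv 0$ on $\mathcal{A}^{\perp}$.

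The key structural input is that $T$ anticommutes with the skew operators $T_{X}\colon V\mapsto XV$ and $T_{Y}^{\circ}\colon V\mapsto Y\circ V$, i.e., $T(XW)=-X\,T(W)$ and $T(Y\circ W)=-Y\circ T(W)$. For ${\bf q}^{*}(X,Y,\cdot)$, substituting $U=XW$ into Corollary~\ref{anti}'s identity $\langle{\bf q}^{*}(X,Y,U),XV\rangle+\langle{\bf q}^{*}(X,Y,V),XU\rangle=0$, and combining with Lemma~\ref{AI}'s skew-symmetry together with $T_{X}^{T}=-T_{X}$, yields exactly this anticommutation; likewise for $T_{Y}^{\circ}$ via the second half of Corollary~\ref{anti}. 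For the algebraic piece $G(W):=X(Y\circ W)-Y\circ(XW)$, the alternative identities $X(XW)=-W$ and $Y\circ(Y\circ W)=-W$ (the latter holding since $\circ$ is a normalized orthogonal multiplication with $|Y|=1$ purely imaginary) give $G(XW)=-XG(W)$ and $G(Y\circ W)=-Y\circ G(W)$ by routine expansion. Hence $T$ inherits both anticommutations.

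In Case~1, combining $T(e_{0})=0$ with the anticommutations gives $T\equiv0$ on the orbit of $e_{0}$ under $\langle T_{X},T_{Y}^{\circ}\rangle$. Using Lemma~\ref{comparison} and the Cayley--Dickson identities $(ae)e=-a$, $a(be)=(ba)e$ for $a,b\in\mathbb{H}$, the relevant orbit elements expand as
\begin{align*}
Y\circ X &= -\cos(2\theta)\,XY-\sin(2\theta)\,(XY)e,\\
X(Y\circ X) &= \cos(2\theta)\,Y-\sin(2\theta)\,Ye,\\
Y\circ(XY) &= \cos(2\theta)\,X+\sin(2\theta)\,Xe,\\
Y\circ\bigl(X(Y\circ X)\bigr) &= -\cos(2\theta)\,e_{0}+\sin(2\theta)\,e.
\end{align*}
Each line, after stripping off the $\mathcal{A}$-part on which $T$ already vanishes, reduces to $\sin(2\theta)\,T(V)=0$ with $V$ running through $(XY)e$, $Ye$, $Xe$, $e$, a basis of $\mathcal{A}^{\perp}$. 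When $\theta\neq 0,\pi/2,\pi$ we have $\sin(2\theta)\neq 0$, forcing $T\equiv 0$ on $\mathcal{A}^{\perp}$.

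The isolated value $\theta=\pi/2$ and the alternative Case~2 require separate arguments. At $\theta=\pi/2$, $\alpha=e$ and a direct Cayley--Dickson computation gives $Y\circ(ae)=(Ya)e$ and $X(ae)=(aX)e$ for $a\in\mathbb{H}$, so $X(Y\circ W)-Y\circ(XW)=\bigl((Ya)X-Y(aX)\bigr)e=0$ on $\mathcal{A}^{\perp}$ by associativity of $\mathbb{H}$; Lemma~\ref{sub} then forces ${\bf q}^{*}(X,Y,W)=0$, and both sides of the claim vanish. For Case~2, where the $\langle T_{X},T_{Y}^{\circ}\rangle$-orbit of $e_{0}$ is trapped inside $\mathcal{A}$, one instead uses Corollary~\ref{class}'s third and fourth identities to rewrite the $\mathcal{A}^{\perp}$-components $\langle{\bf q}^{*}(X,Y,W),X\rangle$ and $\langle{\bf q}^{*}(X,Y,W),Y\rangle$ in terms of $R(X\bar W,Y)$ and $R(X,Y\circ\bar W)$; since $X\bar W,\,Y\circ\bar W\in\mathcal{A}^{\perp}$ these latter fall into Case~1 configurations already determined by Proposition~\ref{CRUCIAL}, and the resulting formula matches $\pm G(W)$ with sign inherited from Proposition~\ref{CRUCIAL}. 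The principal technical obstacle is this Case~2 bookkeeping together with the $\theta=\pi/2$ degeneracy.
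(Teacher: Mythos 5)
Your Case~1 argument is sound and genuinely different from the paper's. The anticommutations ${\bf q}^{*}(X,Y,XW)=-X\,{\bf q}^{*}(X,Y,W)$ and ${\bf q}^{*}(X,Y,Y\circ W)=-Y\circ{\bf q}^{*}(X,Y,W)$ do follow from Corollary~\ref{anti} (set $U=XW$, use $X(XV)=-V$) combined with the skew-symmetry of Lemma~\ref{AI} and the orthogonality of $V\mapsto XV$; the same identities hold for $G(W)=X(Y\circ W)-Y\circ(XW)$ by alternativity and $U_Y^2=-Id$, so $T$ inherits them. Your orbit computation (which I checked: $Y\circ X$, $X(Y\circ X)$, $Y\circ(XY)$, $Y\circ(X(Y\circ X))$ expand as stated, and $T$ vanishes on $\mathcal{A}$ since $T(XY)=-XT(Y)=0$) then kills $T$ on the basis $e,Xe,Ye,(XY)e$ of $\mathcal{A}^{\perp}$ whenever $\sin(2\theta)\neq0$, and your $\theta=\pi/2$ degeneration via Lemma~\ref{sub} matches the paper's \eqref{ind}. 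This is cleaner than the paper's route, which determines ${\bf q}^{*}(X,Y,e)$ component by component against the basis $e_0,X,Y,XY,e,Xe,Ye,(XY)e$ and then propagates to $Xe,Ye,(XY)e$ via the third identity of Corollary~\ref{anti}; your propagation mechanism subsumes both steps.

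Case~2 ($XY\parallel e$), however, has a genuine gap. The third and fourth identities of Corollary~\ref{class} give you $\langle{\bf q}^{*}(X,Y,W),X\rangle=-\langle R(X\overline{W},Y),X\rangle$ and $\langle{\bf q}^{*}(X,Y,W),Y\rangle=-\langle R(X,Y\circ\overline{W}),Y\rangle$, and these pairs are indeed Case~1 configurations — but both inner products evaluate to \emph{zero} (e.g.\ $R(-XW,Y)=(-XW)Y-Y\circ(-XW)$ lies in $\mathcal{A}^{\perp}$, hence is orthogonal to $X$). In this case $X(Y\circ W)-Y\circ(XW)=2\cos(2\theta)\,W(XY)$, so the entire content of the claim sits in the single component $\langle{\bf q}^{*}(X,Y,W),W(XY)\rangle$, and no combination of Corollary~\ref{class}, Corollary~\ref{anti} and Lemma~\ref{AI} reaches it from $R$: the identities only shuffle it into expressions like $\langle{\bf q}^{*}(XY,Y,W),XW\rangle$ with a non-scalar first slot. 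You must, as the paper does, first show all seven other components vanish (the $e_0$-, $X$-, $Y$-, $W$-, $XW$-, $YW$-components by the identities, the $XY$-component by a polynomial-degree argument) and then invoke the norm identity $|{\bf q}^{*}(X,Y,W)|=|X(Y\circ W)-Y\circ(XW)|$ of Lemma~\ref{sub} to pin down the magnitude — which is precisely what produces the $\pm$ and cannot be avoided. In particular the sign is \emph{not} ``inherited from Proposition~\ref{CRUCIAL}'' through the components you compute, since those are zero; at this stage of the paper the Case~2 sign genuinely remains undetermined and is only resolved later (in Theorem~\ref{TM}).
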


\begin{proof} We may assume $X,Y$ are unit vectors. Suppose $X,Y$ and $XY$
are all perpendicular to $e$.
Complete it to an octonian basis $e_0,X,Y,XY,e,Xe,Ye,(XY)e$
of ${\mathbb O}$. The third identity in Corollary~\ref{class}
and Proposition~\ref{CRUCIAL}
imply that
\begin{equation}\nonumber
\aligned
<{\bf q}^{*}(X,Y,e),X>&=<R(X,Y),Xe>=<XY-Y\circ X,Xe>\\
&=2\sin(2\theta)<(XY)e,Xe>=0.
\endaligned
\end{equation}
Likewise, the fourth identity in Corollary~\ref{class} and Proposition~\ref{CRUCIAL}
imply
$$
<{\bf q}^{*}(X,Y,e),Y>=<R(X,Y),Y\circ e>=<XY-Y\circ X,Y\circ e>=0.
$$
%%Meanwhile, substituting
%%$X=Z\overline{Y}$ and invoking~\eqref{import} yield
%%$$\begin{eqnarray}\nonumber
%%$$\aligned
%%$$&<{\bf q}^{*}(X,Y,W),Z>=<{\bf q}^{*}(Z{\overline Y},Y,W),Z>\\
%%$$&=-<{\bf q}^{*}(Z{\overline Y},Y,W),Z>.
%%$$\endaligned
%%$$\end{eqnarray}
%%$$Therefore, we conclude that $<{\bf q}^{*}(X,Y,W),Z>=0$.
Meanwhile,
$$
<{\bf q}^{*}(X,Y,e),e_0>=-<{\bf q}^{*}(X,Y,e_0),e>=-<XY-Y\circ X,e>=0.
$$
On the other hand,
$$
<{\bf q}^{*}(X,Y,e),Xe>=<{\bf q}^{*}(X,Y,e),Ye>=0
$$
by the first two identities of Corollary~\ref{anti}. Lastly,
$<{\bf q}^{*}(X,Y,e),e>=0$ by the first identity of Corollary~\ref{class}.
In conclusion,
\begin{equation}\label{QQ}
{\bf q}^{*}(X,Y,e)=a(XY)+b((XY)e).
\end{equation}

To determine $a$ and $b$, setting $U=e$ and $V=Y$ in the 3rd equation in
Corollary~\ref{anti}, we deduce
\begin{eqnarray}\label{span}
\aligned
<{\bf q}^{*}(X,Y,e),XY>&=-<{\bf q}^{*}(X,Y,Y),Xe>\\
&=<{\bf q}^{*}(X,Y,e_0),(Xe)\circ Y>\\
&=<XY-Y\circ X,(Xe)\circ Y>=\sin(2\theta).
\endaligned
\end{eqnarray}
In the same vein,
\begin{eqnarray}\nonumber
\aligned
&<{\bf q}^{*}(X,Y,e),Y\circ X>=-<{\bf q}^{*}(X,Y,X),Y\circ e>\\
&=<{\bf q}^{*}(X,Y,e_0),(Y\circ e)X>=<XY-Y\circ X,(Y\circ e)X>\\
&=<XY-Y\circ X,(Ye)X>=\sin(2\theta),
\endaligned
\end{eqnarray}
while its left hand side simplifies to
\begin{eqnarray}\nonumber
\aligned
&<{\bf q}^{*}(X,Y,e),Y\circ X>
=<{\bf q}^{*}(X,Y,e),\cos(2\theta)YX+\sin(2\theta)(YX)e>\\
&=-\cos(2\theta)\sin(2\theta)-\sin(2\theta)<{\bf q}^{*}(X,Y,e),(XY)e>
\endaligned
\end{eqnarray}
by~\eqref{span}. So, when $\theta\neq\pi/2$, we end up with
$$
<{\bf q}^{*}(X,Y,e),(XY)e>=-(1+\cos(2\theta)),
$$
which is exactly
$$
{\bf q}^{*}(X,Y,e)=X(Y\circ e)-Y\circ(Xe).
$$
We then
use the third identity of Corollary~\ref{anti} to see that 
$$
{\bf q}^{*}(X,Y,W)=X(Y\circ W)-Y\circ(XW).
$$
for $W=Xe,Ye,(XY)e$, and hence for all $W$ perpendicular to ${\mathcal A}$.

When $\theta=\pi/2$, a straightforward calculation gives
\begin{equation}\label{ind}
|{\bf q}^{*}(X,Y,e)|=|X(Y\circ e)-Y\circ (Xe)|=1+\cos(2\theta)=0,
\end{equation}
so that once more
$$
{\bf q}^{*}(X,Y,e)=X(Y\circ e)-Y\circ (Xe)\; (=0).
$$

In the case when $XY=e$, we know $R(X,Y)=\pm(XY-YX)=\pm 2e$. We form an octonian basis
$e_0,X,Y,e,W,WX,WY,We$.
Then
\begin{eqnarray}\nonumber
\aligned
<{\bf q}^{*}(X,Y,W),e_0>&=-<R(X,Y),W>=<\pm 2e,W>=0,\\
<{\bf q}^{*}(X,Y,W),X>&=<R(X,Y),WX>=0,\\
<{\bf q}^{*}(X,Y,W),Y>&=<R(X,Y),W\circ Y>=0,\\
<{\bf q}^{*}(X,Y,W),W>&=0,\\
<{\bf q}^{*}(X,Y,W),XW>&=<{\bf q}^{*}(X,Y,W),YW>=0,
\endaligned
\end{eqnarray}
where the last identity follows from Corollary~\ref{anti}. It follows that
$$
{\bf q}^{*}(X,Y,W)=a(XY)+b(W(XY))
$$
for some $a,b\in{\mathbb R}$. But then for (polynomial) degree reason $a=0$.
Since 
$$
X(Y\circ W)-Y\circ (XW)=2\cos(2\theta)W(XY),
$$
we see by~\eqref{sub'} that
$$
{\bf q}^{*}(X,Y,W)=\pm(X(Y\circ W)-Y\circ(XW)).
$$

%%We can now write 
%%$$
%%{\bf q}^{*}(X,Y,W)=X(Y\circ W)-Y\circ(XW)+h(X,Y,W).
%%$$
%%If we set $e=e_4$ and $W$ is perpendicular to ${\mathcal A}$, then
%%$h(X,Y,W)$ is either 0 or
%%$$
%%h(X,Y,W)=\cos(2\theta)W(g(X,Y))
%%$$
%%with $g(X,Y)$ given in~\eqref{long}. 
%%above lead us to~\eqref{QQ}, which is
%%$$
%%{\bf q}^{*}(X,Y,e)=ae-be_0.
%%$$
%%Now $a={\bf q}^{*}(X,Y,e,e)=0$ and
%%$$
%%b=<{\bf q}^{*}(X,Y,e_0),e>=\pm<XY-YX,e>=\pm 2.
%%$$
%5As a consequence,
%%$$
%%{\bf q}^{*}(X,Y,e)=\pm(-2e_0)=\pm(X(Y\circ e)-Y\circ(Xe)).
%%$$

%%Putting these together we arrive at the property that for $W$ perpendicular to
%%{\mathcal A}$, there holds
%%$$
%%{\bf q}^{*}(X,Y,W)=X(Y\circ W)-Y\circ(XW)+h(X,Y,W),
%%$$
%%where $h(X,Y,)$.
\end{proof}

\begin{corollary}\label{C} Suppose $\theta\neq 0$ and $\pi$. Let $X$ and $Y$ be purely imaginary and
perpendicular vectors in ${\mathbb O}$ and let $W$ be in the quaternion algebra
${\mathcal A}$ generated by $X$ and $Y$. Then
\begin{equation}\label{Hee}
{\bf q}^{*}(X,Y,W)=X(Y\circ W)-Y\circ (XW)
\end{equation}
if $e$ is perpendicular to ${\mathcal A}$, while
\begin{equation}\label{Haa}
{\bf q}^{*}(X,Y,W)=\pm(X(Y\circ W)-Y\circ (XW))
\end{equation}
if $XY$ is parallel to $e$; here, the sign agrees with that of $R(X,Y)$.
\end{corollary}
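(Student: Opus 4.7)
The plan is to reduce the claim to verification on a basis of $\mathcal{A}$, exploiting linearity of both sides in $W$. Set $F(W):=X(Y\circ W)-Y\circ(XW)$. Both ${\bf q}^{*}(X,Y,W)$ (by Proposition~\ref{3.4-5}) and $F(W)$ are linear in $W$, and Lemma~\ref{sub5} already handles $W\in\mathcal{A}^{\perp}$. So it suffices to verify ${\bf q}^{*}(X,Y,W)=\pm F(W)$ on the basis $\{e_0,X,Y,XY\}$ of $\mathcal{A}$.

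For $W=e_0$ the claim follows from Proposition~\ref{CRUCIAL}: since $z\circ e_0 = z$ in any normalized orthogonal multiplication (a short calculation using $\alpha\overline{\alpha}=e_0$ and alternativity), one has $F(e_0)=XY-Y\circ X=R(X,Y)={\bf q}^{*}(X,Y,e_0)$, with the $\pm$ sign in the second case of the corollary coming directly from that of Proposition~\ref{CRUCIAL}.

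For $W\in\{X,Y,XY\}$, the plan is to expand ${\bf q}^{*}(X,Y,W)$ in the octonian basis adapted to the configuration (namely $e_0,X,Y,XY,e,Xe,Ye,(XY)e$ when $e\perp\mathcal{A}$, and $e_0,X,Y,e,W_0,W_0X,W_0Y,W_0e$ when $XY=e$, with $W_0$ a unit vector in $\mathcal{A}^{\perp}$), and to compute each of the eight inner products $\langle{\bf q}^{*}(X,Y,W),V\rangle$ via a mixture of tools. The first identity of Corollary~\ref{class} gives $\langle{\bf q}^{*}(X,Y,W),W\rangle=0$; the skew-symmetry in the last two slots (Lemma~\ref{AI}) converts $\langle{\bf q}^{*}(X,Y,W),V\rangle$ with $V\in\mathcal{A}^{\perp}\cup\{e_0\}$ into already-known values from Lemma~\ref{sub5} or Proposition~\ref{CRUCIAL}; identities 3--6 of Corollary~\ref{class} express the remaining inner products, namely those with $V\in\{X,Y,XY\}\setminus\{W\}$, in terms of $R$-values to which Proposition~\ref{CRUCIAL} applies; and the orthogonality relations $\langle{\bf q}^{*}(X,Y,W),XW\rangle=\langle{\bf q}^{*}(X,Y,W),Y\circ W\rangle=0$ from Corollary~\ref{anti} close out any remaining direction. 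A parallel direct computation on the $F(W)$ side, using Lemma~\ref{comparison} for the $\circ$-products and the octonian identities \eqref{eq-1}--\eqref{eq0}, yields the matching values.

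The hard part will be the case $XY=e$: since $e$ now lies inside $\mathcal{A}$, Proposition~\ref{CRUCIAL} does not cover all the $R$-values that arise after the reductions via identities 3--6 (for instance $R(X,XY)=R(X,e)$ falls in neither case of that proposition). For those residual $R$-values one must compute directly from the defining formula $\alpha=\cos(\theta)e_0+\sin(\theta)e$ and the structure $a\circ b=(a(b\overline{\alpha}))\alpha$, which yields, for instance, $X\circ e=Xe$, $e\circ X=-Xe$, and $X\circ X=Y\circ Y=-e_0$ when $X,Y$ are unit imaginary vectors perpendicular to $e$. Once these residual $\circ$-values are recorded, every reduction closes and matches the $F$-side inner products. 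Finally, the $\pm$ sign is fixed by $R(X,Y)=\pm(XY-Y\circ X)$ at $W=e_0$ and propagates consistently to every $W\in\mathcal{A}$ because each reduction is linear in the single basic quantity $R(X,Y)$.
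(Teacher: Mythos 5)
Your overall strategy --- reduce by linearity in $W$ to the basis $\{e_0,X,Y,XY\}$ of $\mathcal{A}$, expand ${\bf q}^{*}(X,Y,W)$ in an octonian basis adapted to the position of $e$, and pin down each component using Corollary~\ref{class}, Lemma~\ref{AI}, Corollary~\ref{anti}, Proposition~\ref{CRUCIAL} and the already-proved Lemma~\ref{sub5} --- is exactly the paper's (the paper writes out only $W=X$ and leaves the other basis vectors to the same routine). Your treatment of $W=e_0$ and of the case $e\perp\mathcal{A}$ is sound.

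The gap is in your handling of the residual $R$-values when $XY=e$. You correctly observe that the reductions via identities 3--6 of Corollary~\ref{class} produce pairings such as $\langle R(X,Y\circ\overline{Z}),Y\rangle$ whose arguments (e.g.\ $R(X,e)$, $R(e,Y)$, $R(YX,Y)=R(-e,Y)$) fall under neither case of Proposition~\ref{CRUCIAL}; but you then propose to ``compute these residual $R$-values directly from the defining formula'' of $\circ$, and what you actually compute are $\circ$-products ($X\circ e=Xe$, $X\circ X=-e_0$, etc.). That is a category error: $R(U,V)={\bf q}^{*}(U,V,e_0)$ is part of the unknown third fundamental form, not of the algebra $({\mathbb O},\circ)$, so no amount of evaluating $\circ$-products determines $R(X,e)$. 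The missing ingredient is the third statement of Lemma~\ref{AI}: $\langle{\bf q}^{*}(U,V,e_0),W\rangle$ is totally skew-symmetric in $U,V,W$. This converts every residual pairing back either to $\langle R(X,Y),\cdot\rangle=\langle\pm2e,\cdot\rangle$ or to a vanishing term $\langle R(U,U),\cdot\rangle$; for instance $\langle R(X,e),Y\rangle=\langle R(Y,X),e\rangle=-\langle R(X,Y),e\rangle=\mp2$, which is how one obtains $\langle{\bf q}^{*}(X,Y,X),Y\rangle=\pm2$ as in the paper. With that substitution your argument closes; without it the case $XY\parallel e$ is not actually proved.
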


\begin{proof} The proof follows the same line of thoughts as in the preceding lemma.
Thus we shall only indicate the essential point.

We first assume that $e$ is perpendicular to ${\mathcal A}$ so that by the preceding lemma
\begin{equation}\label{following}
{\bf q}^{*}(X,Y,Z)=X(Y\circ Z)-Y\circ(XZ)
\end{equation}
for $Z$ perpendicular to ${\mathcal A}$. Then as before we construct an octonian basis
$e_0,X,Y,XY,e,Xe,Ye,(XY)e$. We know $<{\bf q}^{*}(X,Y,X),e_0>=-<R(X,Y),X>=0$
and $<{\bf q}^{*}(X,Y,X),X>=0$. By the 5th identity of Corollary~\ref{class},
\begin{eqnarray}\nonumber
\aligned
<{\bf q}^{*}(X,Y,X),Y>&=<R(X,Y),XY>\\
&=<XY-Y\circ X,XY>=1+\cos(2\theta).
\endaligned
\end{eqnarray}
For $Z$ perpendicular to ${\mathcal A}$, we use~\eqref{following} to see
$$
<{\bf q}^{*}(X,Y,X),Z>=-<{\bf q}^{*}(X,Y,Z),X>=<(Ze)(XY),X>,
$$
so that we derive
\begin{equation}\label{E}
<{\bf q}^{*}(X,Y,X),e>=<{\bf q}^{*}(X,Y,X),Xe>=<{\bf q}^{*}(X,Y,X),(XY)e>=0,
\end{equation}
while
\begin{equation}\label{EE}
<{\bf q}^{*}(X,Y,X),Ye>=-\sin(2\theta).
\end{equation}
Therefore, we conclude
\begin{equation}\label{Umm}
{\bf q}^{*}(X,Y,X)=(1+\cos(2\theta))Y-\sin(2\theta)Ye=X(Y\circ X)-Y\circ(XX).
\end{equation}
(Note that ${\bf q}^{*}=0$ if $\theta=\pi/2$.)
When $XY=e$, we from the octonian basis $e_0,X,Y,e,W,XW,YW,(XY)W$ and
we have $R(X,Y)=\pm 2XY$ and ${\bf q}^{*}(X,Y,Z)=\pm(X(Y\circ Z)-Y\circ(XZ))$
for $Z$ perpendicular to ${\mathcal A}$. We see $<{\bf q}^{*}(X,Y,X),Y>=\pm 2$
and $<{\bf q}^{*}(X,Y,X),Z>=0$ for all $Z$ perpendicular to ${\mathcal A}$. Hence
$$
{\bf q}^{*}(X,Y,X)=\pm 2Y=\pm (X(Y\circ X)-Y\circ(XX)).
$$
\end{proof}

\begin{theorem}\label{TM} Suppose $\theta\neq 0$ and $\pi$. For all $X,Y\in{\mathbb O}$ and all $Z\in{\mathbb O}$
we have
\begin{equation}\label{hot}
{\bf q}^{*}(X,Y,Z)=X(Y\circ Z)-Y\circ (XZ).
\end{equation}
Thus the hypersurfaces are of the type constructed by Ferus, Karcher and
M\"{u}nzner.
\end{theorem}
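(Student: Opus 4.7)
The plan is to reduce Theorem~\ref{TM} to the identity already established for orthogonal purely imaginary pairs in Lemma~\ref{sub5} and Corollary~\ref{C}, and then to close the gap by a bilinear-algebra argument together with the norm identity from Lemma~\ref{sub}.

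First I would resolve the $\pm$ sign ambiguity present when $XY\parallel e$. Since $R(X,Y)={\bf q}^{*}(X,Y,e_0)$ and more generally ${\bf q}^{*}(X,Y,Z)$ are bilinear in $(X,Y)$ (hence continuous), and since orthonormal imaginary pairs with $XY\not\parallel e$ accumulate onto those with $XY\parallel e$, continuity forces the $+$ sign uniformly. Combined with Lemma~\ref{sub5} and Corollary~\ref{C}, this yields ${\bf q}^{*}(X,Y,Z)=X(Y\circ Z)-Y\circ(XZ)$ whenever $X,Y$ are purely imaginary and orthogonal and $Z\in\mathbb{O}$ is arbitrary.

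Next, define the bilinear error
\[
B(X,Y)(Z):={\bf q}^{*}(X,Y,Z)-\bigl(X(Y\circ Z)-Y\circ(XZ)\bigr),
\]
a bilinear map $\mathrm{Im}(\mathbb{O})\times\mathrm{Im}(\mathbb{O})\to\mathrm{Hom}(\mathbb{O},\text{normal space})$. By the previous step $B$ vanishes on all orthogonal pairs. Fix an orthonormal basis $e_1,\dots,e_7$ of $\mathrm{Im}(\mathbb{O})$: then $B(e_i,e_j)=0$ for $i\neq j$, and the vanishing of $B$ on the orthogonal pair $(e_i+e_j,\,e_i-e_j)$ forces $B(e_i,e_i)=B(e_j,e_j)$. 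Therefore $B(X,Y)=\langle X,Y\rangle L$ for a single fixed $L\in\mathrm{Hom}(\mathbb{O},\text{normal space})$.

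It remains to show $L\equiv 0$. Putting $X=Y$ imaginary in Lemma~\ref{sub} gives $|{\bf q}^{*}(X,X,Z)|^{2}=|X(X\circ Z)-X\circ(XZ)|^{2}$, which after expansion and cancellation reduces to
\[
2\langle X(X\circ Z)-X\circ(XZ),\,L(Z)\rangle+|X|^{2}|L(Z)|^{2}=0
\]
for every nonzero imaginary $X$ and every $Z\in\mathbb{O}$. The quadratic form $X\mapsto\langle X(X\circ Z)-X\circ(XZ),L(Z)\rangle$ is thus $-\tfrac{1}{2}|L(Z)|^{2}|X|^{2}$, and its trace over the orthonormal imaginary basis gives
\[
\Bigl\langle\sum_{i=1}^{7}\bigl(e_i(e_i\circ Z)-e_i\circ(e_i Z)\bigr),\,L(Z)\Bigr\rangle=-\tfrac{7}{2}|L(Z)|^{2}.
\]
The main obstacle is then the technical identity $\sum_{i=1}^{7}e_i(e_i\circ Z)=\sum_{i=1}^{7}e_i\circ(e_i Z)$ for an arbitrary normalized orthogonal $\circ$. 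I would prove it by writing $a\circ b=(a(b\overline{\alpha}))\alpha$ via Lemma~\ref{QS}, invoking the alternativity identity $e_i(e_i W)=-W$ in $\mathbb{O}$ to rewrite each summand as $-(Z\overline{\alpha})\alpha+[e_i,e_i W,\alpha]$-type corrections, and then showing $\sum_{i=1}^{7}[e_i,e_i W,\alpha]=0$ by bilinearizing the alternativity relation $[e_i,e_i,\cdot]=0$ in its middle slot and using $e_ie_k+e_ke_i=-2\delta_{ik}$; the reversed-type $\circ$ is handled symmetrically by flexibility $(ab)a=a(ba)$. Once this trace identity is established, $|L(Z)|^{2}=0$ for every $Z$, so $L\equiv 0$. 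The identity ${\bf q}^{*}(X,Y,Z)=X(Y\circ Z)-Y\circ(XZ)$ then holds for all $X,Y\in\mathrm{Im}(\mathbb{O})$, and extends to $X,Y\in\mathbb{O}$ because both sides vanish when $X=e_0$ or $Y=e_0$; matching with Subsection~\ref{sec5} identifies the hypersurface as of Ferus--Karcher--M\"unzner type.
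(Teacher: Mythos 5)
Your first step --- resolving the $\pm$ ambiguity by continuity --- does not work, and the rest of the argument depends on it. Proposition~\ref{CRUCIAL}, Lemma~\ref{sub5} and Corollary~\ref{C} establish the formula with the $+$ sign only on the stratum of orthonormal imaginary pairs with $X$, $Y$ \emph{and} $XY$ all perpendicular to $e$, and the ambiguous $\pm$ formula only on the stratum with $XY$ parallel to $e$. These two sets are disjoint and the first is closed: if $X_tY_t\perp e$ for all $t$ and $(X_t,Y_t)\to(X_0,Y_0)$, then $X_0Y_0\perp e$, so such pairs can never accumulate onto a pair with $X_0Y_0\parallel e$ (note $|X_0Y_0|=1$). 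The intermediate configurations, where $XY$ makes a generic angle with $e$, are covered by none of the preceding results, so there is nothing to pass to the limit through. Bilinearity does not rescue this either: for fixed $X\perp e$, the $Y$'s for which the $+$ sign is known span only the $4$--dimensional complement of $\mathrm{span}(e_0,X,e,Xe)$, not all of $\mathrm{Im}(\mathbb{O})$. That the sign cannot be forced pointwise is confirmed by the case $\theta=\pi/2$, where the paper shows the ambiguity is genuine and is removable only via the global symmetry $(X,Y,Z)\mapsto(-X,-Y,-Z)$; a continuity argument proving a pointwise $+$ sign there would prove too much.

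The paper's actual mechanism is worth contrasting with yours: it evaluates the \emph{diagonal} ${\bf q}^{*}(X,X,W)$, which is a single well-defined quantity, in two ways --- by differentiating the unambiguous formula~\eqref{Hee} along a rotation $(X(t),Y(t))$ that stays inside the perpendicular stratum (this, together with the norm identity, pins down ${\bf q}^{*}(X,X,W)=X(X\circ W)-X\circ(XW)$ when $\theta\neq\pi/2$), and by differentiating the $\pm$ formula~\eqref{Haa} along a rotation through a pair with $XY=e$. Comparing the two forces the sign to be $+$ precisely because the diagonal is nonzero for $\theta\neq\pi/2$; the case $\theta=\pi/2$ is then handled separately. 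Your later steps (the decomposition $B(X,Y)=\langle X,Y\rangle L$ by polarization, and killing $L$ via the norm identity of Lemma~\ref{sub} and a trace computation) are a reasonable elaboration of the paper's terse ``homogenizing and comparing polynomial types,'' and the trace identity $\sum_i e_i(e_i\circ Z)=\sum_i e_i\circ(e_iZ)$ you would need is plausible; but those steps presuppose that $B$ vanishes on \emph{all} orthogonal imaginary pairs, which is exactly the unproven sign claim. As written, the proof has a gap at its foundation.
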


\begin{proof} Lemma~\ref{sub5} and Corollary~\ref{C} only deal with the case
when the imaginary $X$ and $Y$
are perpendicular in ${\bf q}^{*}(X,Y,Z)$, which leaves an undetermined sign. We now remove the sign
by considering the case when $X=Y$. 

Let $X,Y\in\text{Im}({\mathbb O})$ be orthonormal
such that $e$ is perpendicular to $X,Y$ and $XY$. Then the circles
$X(t):=\cos(t)X+\sin(t)Y$ and $Y(t):=-\sin(t)X+\cos(t)Y$ satisfy that $X(t), Y(t),X(t)Y(t)$
are perpendicular to $e$. Differentiating~\eqref{Hee} at $t=0$,
we obtain
\begin{equation}\nonumber
\aligned
&{\bf q}^{*}(Y,Y,W)-{\bf q}^{*}(X,X,W)\\
&=-(X(X\circ W)-X\circ(XW))+(Y(Y\circ W)-Y\circ(YW)).
\endaligned
\end{equation}
Note that
\begin{equation}\label{Good}
|{\bf q}^{*}(X,X,Z)|=|\sin(2\theta)(X((XZ)e)-(X(XZ))e)|\neq 0
\end{equation}
unless $\theta=\pi/2$. Homogenizing and comparing polynomial types, we get
$$
{\bf q}^{*}(X,X,W)=X(X\circ W)-X\circ(XW)
$$
when $\theta\neq\pi/2$. On the other hand, when $\theta\neq \pi/2$,
we fix the same $X$ and choose a $Y$ such that $XY=e$,
differentiating~\eqref{Haa} gives
$$
{\bf q}^{*}(X,X,W)=\pm((X(X\circ W)-X\circ(XW)).
$$
Therefore, the sign must be positive when $\theta\neq\pi/2$.
%%The formula holds for all octonian $X,Y$
%%since ${\bf q}^{*}(e_0,Y,Z)=0$ always holds.

When $\theta=\pi/2$, the formula~\eqref{Good} implies ${\bf q}^{*}(X,X,Z)=0$
for all $X,Z\in{\mathbb O}$, and so ${\bf q}^{*}$ is skew-symmetric in $X$
and $Y$. So, a priori the sign is undetermined. However,
by~\eqref{ind} and~\eqref{Umm} we have seen ${\bf q}^{*}(X,Y,Z)=0$
for all $Z$ when $e$ is perpendicular to $X,Y$ and $XY$. The sign is
ambiguous only in the case when $XY=e$.
Now, set $e=e_4$. Then since
any two different imaginary basis elements $e_a,e_b\neq e_4$ satisfy either
$e_ae_b=e_4$, or $e_a,e_b$ and $e_ae_b$ are all perpendicular to $e_4$, the
analysis in Lemma~\ref{sub5} and Corollary~\ref{C} provides a recipe for
writing down ${\bf q}^{*}(X,Y,Z)$ explicitly as follows.
$$
{\bf q}^{*}(X,Y,Z)=\pm\sum(x_iy_je_i(\circ (e_j Z))-y_jx_ie_j\circ(e_iZ)),
$$
where $i,j\geq 1$ run over the indexes where $e_ie_je_4=\pm e_0$.

Since changing $X,Y,Z$ to $-X,-Y,-Z$ retains the 2nd fundamental form and
changes the 3rd fundamental form by a sign, we might as well choose the positive sign.

Therefore, in any event, the 3rd fundamental form is the desired form given
by~\eqref{hot}. 
\end{proof}

Proposition~\ref{deform} implies that we can always perturb to find a mirror point
$x^{*}\in M_{-}$ at which $\theta=0$ or $\pi$, even when initailly the choice of $x^{*}$
produces an angle $\theta$ different from $0$ and $\pi$. Therefore, the classification is reduced to the
case when $\theta=0$ or $\pi$.

\subsection{The case when $\theta=0$ or $\pi$}
By Corollary~\ref{cute}, we know $R(X,Y)=XY-YX$ for $\theta=0$ and $R(X,Y)\equiv 0$ for $\theta=\pi$.

\begin{corollary}\label{class1} Suppose $a\circ b=ab,\forall a,b.$ For $X,Y\in {\rm Im}({\mathbb O})$, we have
\begin{eqnarray}\nonumber
\aligned
&<{\bf q}^{*}(X,Y,Z),Z>=0,\\
&<{\bf q}^{*}(X,Y,e_0),X>=<{\bf q}^{*}(X,Y,e_0),Y>=0,\\
&<{\bf q}^{*}(X,Y,Z),X>=2<X,Y><X,Z>-2|X|^2<Y,Z>,\\
&<{\bf q}^{*}(X,Y,Z),Y>=-2<X,Y><Y,Z>+2|Y|^2<X,Z>.
%%&<{\bf q}^{*}(X,Y,X),Z>=-2<X,Y><X,Z>+2|X|^2<Y,Z>,\\
%%&<{\bf q}^{*}(X,Y,Y),Z>=2<X,Y><Y,Z>-2|Y|^2<X,Z>.
\endaligned
\end{eqnarray}
\end{corollary}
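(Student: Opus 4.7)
The plan is to deduce (1) and (2) directly from prior results, and to obtain (3) and (4) by inserting the explicit formula $R(X,Y)=XY-YX$ from Corollary~\ref{cute} (which holds because $\theta=0$ forces $\circ$ to coincide with octonion multiplication) into the third and fourth identities of Corollary~\ref{class}.

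Identity~(1) is immediate from the skew-symmetry of $\langle {\bf q}^*(X,Y,Z),W\rangle$ in $Z,W\in\mathbb{O}$ proved in Lemma~\ref{AI}: set $W=Z$. Identity~(2) is literally the second identity of Corollary~\ref{class}.

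For~(3), the third identity of Corollary~\ref{class} gives
\[
\langle {\bf q}^*(X,Y,Z),X\rangle \;=\; -\langle (X\overline{Z})Y - Y(X\overline{Z}),\,X\rangle.
\]
The first summand on the right collapses, via the shift rules in \eqref{eq-1} and the alternative-law identity $(Z\overline{X})X=|X|^{2}Z$, to $|X|^{2}\langle Y,Z\rangle$. For the second, after applying the shift rules and the vanishing associator $[X,\overline{Z},\overline{X}]=0$ (an immediate consequence of alternativity, since $\overline{X}$ lies in the real span of $1$ and $X$), one arrives at $-\langle Y,(X\overline{Z})X\rangle$. Splitting $Z=z_{0}e_{0}+Z'$ with $Z'\in\mathrm{Im}(\mathbb{O})$ and invoking the identity
\[
XZ'X \;=\; |X|^{2}Z' - 2\langle X,Z'\rangle X \qquad (X,Z'\in\mathrm{Im}(\mathbb{O})),
\]
which itself follows from the third relation in \eqref{eq-1} combined with flexibility, collapses everything to scalars in $|X|^{2},\langle X,Y\rangle,\langle X,Z\rangle,\langle Y,Z\rangle$. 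Subtracting the two summands yields~(3). Identity~(4) is entirely parallel: apply the fourth identity of Corollary~\ref{class}, note that $Y\circ\overline{Z}=Y\overline{Z}$ since $\theta=0$, and perform the same manipulations with $Y$ playing the role $X$ did, using $(Z\overline{Y})Y=|Y|^{2}Z$ and $Y(Z\overline{Y})=(YZ)\overline{Y}$ as the corresponding alternative/associator simplifications.

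The only real obstacle is the combinatorial bookkeeping forced by non-associativity: each product must be rewritten into a form in which either the alternative law $\overline{a}(ab)=|a|^{2}b$ can fire, or a repeated-argument associator such as $[a,b,a]=0$ can be invoked. Once the manipulations are applied in the correct order, no identity beyond \eqref{eq-1} and alternativity is needed, and the two halves of the commutator $R(\cdot,\cdot)$ combine to produce precisely the claimed bilinear-in-inner-products expressions.
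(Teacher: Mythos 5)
Your proposal is correct and follows essentially the same route as the paper's own (one-line) proof: substitute $R(X,Y)=XY-YX$ from Corollary~\ref{cute} into the identities of Corollary~\ref{class} (with Lemma~\ref{AI} covering the first identity for all $Z\in\mathbb{O}$) and simplify using \eqref{eq-1} and alternativity. Only a cosmetic slip: the intermediate expression for the second summand should read $-\langle Y,(XZ)X\rangle$ (equivalently $+\langle Y,(X\overline{Z})X\rangle$, since the two differ by a real multiple of $e_0$, orthogonal to the imaginary $Y$), and with this your subsequent use of $XZ'X=|X|^{2}Z'-2\langle X,Z'\rangle X$ does give exactly the stated formulas.
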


\begin{proof} This follows from $R(X,Y)=XY-YX$ and Corollary~\ref{class}.
\end{proof}

\begin{corollary}\label{quaternion} If the normed algebra
is ${\mathbb H}$, then Theorem~\ref{Th} is true.
\end{corollary}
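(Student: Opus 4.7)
The plan is to reduce the quaternion case directly to two subcases using Remark~\ref{Q}, which tells us that the only normalized orthogonal multiplications on $\mathbb{H}$ are the standard product $a\circ b=ab$ (i.e.\ $\theta=0$) and its opposite $a\circ b=ba$ (i.e.\ $\theta=\pi$); there is no continuous parameter $\theta$ to track as in the octonian setting. In both subcases Lemma~\ref{sub} combined with associativity of $\mathbb{H}$ will give a sharp constraint on $|{\bf q}^{*}|$, and the identities proven in Section~6 will pin ${\bf q}^{*}$ down.

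In the case $\theta=\pi$, associativity yields $X(Y\circ Z)-Y\circ(XZ)=X(ZY)-(XZ)Y=0$, so Lemma~\ref{sub} forces ${\bf q}^{*}\equiv 0$; this is already the FKM formula for this $\circ$, so no further work is needed. For $\theta=0$, $\circ$ is standard quaternion multiplication and associativity makes the OT and FKM right-hand sides coincide: $X(YZ)-Y(XZ)=(XY-YX)Z$, and Corollary~\ref{cute} provides $R(X,Y)=XY-YX$. I would then run a streamlined quaternionic analogue of Lemma~\ref{sub5} and Corollary~\ref{C}: for orthonormal imaginary $X,Y$ the four-element set $e_0,X,Y,XY$ already exhausts $\mathbb{H}$, so the octonian subcases involving a transverse element $e$ disappear entirely. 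For each pair of basis elements $Z,W\in\{e_0,X,Y,XY\}$, compute $\langle{\bf q}^{*}(X,Y,Z),W\rangle$ using the $X$- and $Y$-slot formulas of Corollary~\ref{class1}, the skew-symmetries furnished by Lemma~\ref{AI}, and the value $\langle R(X,Y),W\rangle=\langle XY-YX,W\rangle$. Sixteen inner products, all determined, yield ${\bf q}^{*}(X,Y,Z)=(XY-YX)Z$ for all $Z\in\mathbb{H}$ whenever $X,Y$ are orthonormal imaginary.

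To finish, extend to general $X,Y\in\mathbb{H}$. Writing $X=x_0 e_0+X'$ and $Y=y_0 e_0+Y'$ with $X',Y'\in{\rm Im}(\mathbb{H})$, the extension ${\bf q}^{*}(e_0,\cdot,\cdot)={\bf q}^{*}(\cdot,e_0,\cdot)=0$ together with multilinearity reduces both ${\bf q}^{*}(X,Y,Z)$ and $(XY-YX)Z$ to $(X'Y'-Y'X')Z$; a polarization argument in $X',Y'$ of the sort used in Theorem~\ref{TM} then bridges from the orthonormal imaginary case to arbitrary imaginary pairs. The only real issue to watch for is the sign ambiguity that plagued the octonian argument; it collapses here because the troublesome $(XY)e$-type terms appearing in Proposition~\ref{CRUCIAL} have no analogue in the associative setting, so no separate ``sign-removal'' step (as in the final paragraph of Theorem~\ref{TM}) is required.
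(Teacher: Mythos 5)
Your argument is correct and reaches the same two-case structure as the paper (via Remark~\ref{Q}: $\theta=\pi$ killed by Lemma~\ref{sub} and associativity, $\theta=0$ handled by evaluating ${\bf q}^{*}(X,Y,\cdot)$ on the basis $e_0,X,Y,XY$), but it handles the one delicate point differently. The paper does not determine all sixteen inner products directly: it shows ${\bf q}^{*}(X,Y,XY)\perp X,Y,XY$, uses the norm identity~\eqref{identity} to get ${\bf q}^{*}(X,Y,XY)=-2c|X|^2|Y|^2e_0$ with $c=\pm1$, and then eliminates $c=-1$ by appealing to the seventh Ozeki--Takeuchi equation through~\eqref{goot} (Corollary~\ref{anti}), i.e.\ $<{\bf q}^{*}(X,Y,W),XW>=0$. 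You instead pin down the critical entry $<{\bf q}^{*}(X,Y,XY),e_0>=-<R(X,Y),XY>=-2$ by the $Z\leftrightarrow W$ skew-symmetry of Lemma~\ref{AI} (which rests on the fifth Ozeki--Takeuchi equation), so the sign ambiguity never arises and no separate elimination step is needed; this is a legitimate and in fact slightly shorter route, trading the paper's use of the seventh equation for a fuller use of the fifth. Two small points to tighten: the value $R(X,Y)=XY-YX$ from Corollary~\ref{cute} already involves the one allowed global sign normalization ($X,Y,Z\to -X,-Y,-Z$), so your conclusion, like the paper's, is ``up to isometry''; and in the final extension to arbitrary imaginary $X,Y$, the ``polarization of the sort used in Theorem~\ref{TM}'' is more than you need --- what is required is ${\bf q}^{*}(X,X,Z)=0$, which here is immediate from Lemma~\ref{sub} since $a\circ b=ab$ gives $|X(X\circ Z)-X\circ(XZ)|=0$, exactly the skew-symmetry in the first two slots that the paper invokes at the same juncture.
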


\begin{proof} By Remark~\ref{Q}, either $a\circ b=ab$ or $=ba$ for all $a,b\in{\mathbb H}$.

Case 1. $a\circ b=ba,\forall a,b.$

Then by~\eqref{identity}, $|{\bf q}^{*}(X,Y,Z)|=|X(ZY)-(XZ)Y|=0$
by the associativity of ${\mathbb H}$. So,
$$
{\bf q}^{*}=0=X(Y\circ Z)-Y\circ(XZ).
$$
The hypersurface is of the type constructed by Ferus, Karcher and M\"{u}nzner
by Section~\ref{sec5}.

\noindent Case 2. $a\circ b=ab,\forall a,b.$

Let $X,Y$ be
mutually orthogonal and purely imaginary. We set $Z=XY$. Then the first, third
and fourth identities of
Corollary~\ref{class1} imply ${\bf q}^{*}(X,Y,Z)$ is perpendicular to $X,Y,Z$;
therefore,
${\bf q}^{*}(X,Y,Z)$ is parallel to $e_0$.
Let ${\bf q}^{*}(X,Y,Z)=-2c|X|^2|Y|^2e_0$ for some constant $c$.
By identity~\eqref{identity} we obtain the identity $|{\bf q}^{*}(X,Y,Z)|=2|X|^2|Y|^2$;
we see therefore $c=\pm 1$. Thus,
$$
{\bf q}^{*}(X,Y,Z)=-2c|X|^2|Y|^2e_0=2cZZ=c(XY-YX)Z.
$$
Meanwhile,
$$
{\bf q}^{*}(X,Y,e_0)=R(X,Y)=(XY-YX)e_0.
$$
Corollary~\ref{class1} also yields

\begin{eqnarray}\nonumber
\aligned
&{\bf q}^{*}(X,Y,X)=2|X|^2=(XY-YX)X,\\
&{\bf q}^{*}(X,Y,Y)=-2|Y|^2X=(XY-YX)Y.
\endaligned
\end{eqnarray}
Putting all these together, we arrive at
\begin{eqnarray}\label{double}
\aligned
{\bf q}^{*}(X,Y,W)=&(XY-YX)W,\quad\text{or}\\
{\bf q}^{*}(X,Y,W)=&(XY-YX)W-<W,XY-YX>e_0,
\endaligned
\end{eqnarray}
where $c=1$ for the first equation and $c=-1$ for the second. Although we have derived the formulae assuming that $X$ and $Y$ are
perpendicular, the same formulae remain true for any two imaginary $X$ and $Y$
since ${\bf q}^{*}(U,V,W)$ is skew-symmetric in $U,V$.

If $c=1$, then
$$
{\bf q}^{*}(X,Y,W)=X(Y\circ W)-Y\circ(XW).
$$
So the hypersurface is of the type constructed by Ferus, Karcher and M\"{u}nzner
by Section~\ref{sec5}. It satisfies~\eqref{goot}
\begin{equation}\label{bridging}
<X(Y\circ W)-Y\circ(XW),XW>=0
\end{equation}
We show $c=-1$ is impossible. Assume otherwise.
Then since such an isoparametric hypersurface must also satisfy~\eqref{goot}, we
would conclude
%%~\cite[p 530]{OT}
\begin{equation}\nonumber
\aligned
0&=<{\bf q}^{*}(X,Y,W),XW>\\
&=<X(Y\circ W)-Y\circ(XW),XW>-<W,XY-YX>e_0,XW>\\
&=<W,XY-YX><W,X>\neq 0
\endaligned
\end{equation}
by~\eqref{bridging}.
%%$$
%%p_0^{*}q_0^{*}+\cdots+p_{m_1}^{*}q_{m_1}^{*}=0.
%%$$
%%The hypersurface with $c=1$ is of OT-FKM type and by~\eqref{double}
%%the only difference between the third fundamental forms of the two hypersurfaces with $c=\pm 1$ is the
%%$q_0^{*}$ terms. We therefore conclude that
%%$$
%5p_0^{*}q_0^{*}+\cdots+p_{m_1}^{*}q_{m_1}^{*}=2p_0^{*}<W,XY-YX>\neq 0.
%5$$
This is a contradiction.
\end{proof}

%%$$
%%{\bf q}^{*}(X,Y,W)=2cZW=2c(XY)W
%%$$
%%for some constant $c$. The identity~\eqref{identity}
%%$$
%%|{\bf q}^{*}(X,Y,W)|=|X(Y\circ W)-Y\circ(XW)|
%%$$
%%then establishes that $c=\pm 1$.
%%\end{proof}

To finish Theorem~\ref{Th} in the octonian case, we break it into two cases.

\noindent Case 1. $a\circ b=ab,\forall a,b.$

Identity~\eqref{identity} shows that $|{\bf q}^{*}(X,X,Z)|=0,\forall X,Z\in{\mathbb O},$
so that ${\bf q}^{*}(X,Y,Z)$ is skew-symmetric in $X,Y,\forall X,Y\in{\mathbb O}.$

Let $X,Y\neq 0$ be perpendicular and purely imaginary and $W$ be
in the orthogonal complement of ${\mathcal A}$, the quaternion algebra generated
by $X$ and $Y$. We know by~\eqref{QQ} and~\eqref{span} 
that ${\bf q}^{*}(X,Y,W)=\pm 2((XY)W)$, if $X,Y$ and $XY$
are all perpendicular to $e$, and the same formula holds if $XY=e$, where the signs
might not be related a priori in the two cases. We assume first that the signs
are identical. Namely,
$$
{\bf q}^{*}(X,Y,W)=2c((XY)W),
$$
where $c=1$ or $c=-1$ for all $W$ perpendicular to ${\mathcal A}$. 
If $c=1$, then
$$
{\bf q}^{*}(X,Y,W)=(XY-YX)W,
$$
which remains true for any
two purely imaginary $X$ and $Y$ not necessarily perpendicular to each other,
as ${\bf q}^{*}$ is skew-symmetric in $X,Y$. It follows that
$$
{\bf q}^{*}(X,Y,Z)=(XY-YX)Z
$$
for any $Z\in{\mathbb O}$, as it is also true for $Z\in {\mathcal A}$ by
Corollary~\ref{quaternion},
where we use~\eqref{E} and~\eqref{EE} to see
that ${\bf q}^{*}(X,Y,Z)\in{\mathcal A}$ for $Z\in{\mathcal A}$.
This is the isoparametric hypersurface constructed by
Ozeki  and Takeuchi.

If $c=-1$, then
$$
{\bf q}^{*}(X,Y,W)=-2(XY)W=X(YW)-Y(XW),
$$
so that there holds
$$
{\bf q}^{*}(X,Y,Z)=X(YZ)-Y(XZ)=X(Y\circ Z)-Y\circ (XZ)
$$
for any $X,Y,Z\in{\mathbb O}$, as it is true for $Z\in {\mathcal A}$ by
Corollary~\ref{quaternion}.
These are the isoparametric hypersurfaces
constructed by Ferus, Karcher and M\"{u}nzner.

We need to remove the case when 
${\bf q}^{*}(X,Y,W)=2((XY)W)$ if $X,Y,$ and $XY$
are all perpendicular to $e$, whereas
${\bf q}^{*}(X,Y,W)=-2((XY)W)$ when $XY=e$. Assuming this is the case. Then 
Corollary~\ref{quaternion} implies
$$
{\bf q}^{*}(X,Y,W)=(XY-YX)W+h(X,Y,W),
$$
where $h(X,Y,W)=-4eW^{\perp}$ if $XY=e$. As seen in Corollary~\ref{quaternion},
the existence of an isoparametric hypersurface with such a ${\bf q}^{*}$
would imply
$$
<h(X,Y,W),XW>=<{\bf q}^{*}(X,Y,W)-(XY-YX)W,XW>=0.
$$
But then if we pick $XY=e$ and $W=e_0+Xe$, we get
$$
<h(X,Y,W),XW>=-4<eW^{\perp},XW>=4<X,X-e>\neq 0.
$$
This is a contradiction.

\noindent Case 2. $a\circ b=ba,\forall a,b.$

Note that again $|{\bf q}^{*}(U,U,Z)|=|U(ZU)-(UZ)U|=0,\forall U,Z\in{\mathbb O},$ so that
${\bf q}^{*}$ is skew-symmetric in the first two slots.

If $c=1$, then
$$
{\bf q}^{*}(X,Y,W)=2(XY)W=X(WY)-(XW)Y,
$$
so that
$$
{\bf q}^{*}(X,Y,Z)=X(ZY)-(XZ)Y=X(Y\circ Z)-Y\circ(XZ)
$$
for any $X,Y,Z\in{\mathbb O}$, as ${\bf q}^{*}=0$ on ${\mathcal A}$.

If $c=-1$, then ${\bf q}^{*}$ only differs from the previous case by a negative sign.
Changing $X,Y,Z$ to $-X,-Y,-Z$ converts it to the previous case.

This completes the classification of Theorem~\ref{Th}.

\begin{remark}
In the octonian case, the two
isoparametric hypersurfaces
with ${\bf q}^{*}=X(Y\circ Z)-Y\circ (XZ)$ constructed by Ferus, Karcher and
M\"{u}nzner
are of Condition B at $x^{*}\in M_{-}$. In contrast, the hypersurface with ${\bf q}^{*}=(XY-YX)Z$
is not of Condition B at $x^{*}$; however, it is of both Condition A and B
at $x\in M_{+}$ constructed by Ozeki and Takeuchi.

In the quaternionic case, however, $(XY-YX)Z=X(YZ)-Y(XZ)$, so that we have
only two different
such isoparametric hypersurfaces, where the example of Ozeki and Takeuchi of
multiplicities
$(3,4)$ of Conditions A and B at $x\in M_{+}$ is also of Condition B at
$x^{*}\in M_{-}$.
The other isoparametric hypersurface is of Condition B at $x^{*}\in M_{-}$
with
$q^{*}=X(ZY)-(XZ)Y=0$; it is the homogeneous example of multiplicities
$(4,3)$.
\end{remark}


\begin{thebibliography}{100}

\bibitem[1]{C} E. Cartan, {\em Sur des familles remarquables d'hypersurfaces isoparam\"{e}triques
dans les espaces sph\"{e}riques,}\, {Math. Zeit.}\,{\bf 45}(1939), 335-367.
\bibitem[2]{CCJ} {T. Cecil, Q.-S. Chi and G. Jensen}, {\em Isoparametric hypersurfaces with four
principal curvatures,}\, {Ann. Math.}\,{\bf 166}(2007), 1-76.
\bibitem[3]{DN1} {J.Dorfmeister and E. Neher}, {\em An algebraic approach to isoparametric hypersurfaces in spheres I and II,}\,
{T\^{o}hoku Math. J.}\,{\bf 35}(1983), 187-224 and 225-247.
\bibitem[4]{DN} {J.Dorfmeister and E. Neher}, {\em Isoparametric triple systems of algebra type,}\,
{Osaka J. Math.}\,{\bf 20}(1983), 145-175.
\bibitem[5]{DN2} {J.Dorfmeister and E. Neher}, {\em Isoparametric triple systems of FKM-type,}\,
{Manuscripta Math.}\,{\bf 43}(1983), 13-44.
\bibitem[6]{FKM} {D. Feres, H. Karcher and H.-F. M\"{u}nzner}, {\em Cliffordalgebren
und neue isoparametrische hyperfl\"{a}schen,}\, {Math. Z.}\,{\bf 177}(1981), 479-502.
\bibitem[7]{H} {D. Husemoller}, {Fiber Bundles}, 3rd, ed., Graduate Text in Mathematics 20,
Springer-Verlag, New York, Berlin, Heidelberg. 
\bibitem[8]{Mc} {K. McCrimmon}, {\em Quadratic forms permitting triple compositions},\, {Trans. A.M.S.}\,{\bf 275}(1983), 107-130.
\bibitem[9]{M} {H.-F. M\"{u}nzner}, {\em Isoparametrische hyperfl\"{a}chen in sph\"{a}ren, I and II,}\,
{Math. Ann.}\,{\bf 251}(1980), 57-71 and {\bf 256}(1981), 215-232.
\bibitem[10]{OT} {H. Ozeki and M. Takeuchi}, {\em On some types of isoparametric hypersurfaces I and II},\,
{T\^{o}hoku Math. J.}\,{\bf 27}(1975), 515-559 and {\bf 28}(1976), 7-55.
\bibitem[11]{S} {S. Stolz}, {\em Multiplicities of Dupin hypersurfaces}\, {Inven. Math.}\,{\bf 138}(1999), 253-279.


\end{thebibliography}
\end{document}